\newtheorem{theo}{Theorem}[section]
\newtheorem{lemm}[theo]{Lemma}
\newtheorem{prop}[theo]{Proposition}
\newtheorem{coro}[theo]{Corollary}
\theoremstyle{definition}
\newtheorem{exam}[theo]{Example}
\newtheorem{defi}[theo]{Definition}
\newtheorem{rema}[theo]{Remark}
\title[Dual braid monoid algebras]{Koszulity of dual braid monoid algebras via cluster complexes}
\author{Matthieu Josuat-Verg\`es}
\address{Universit\'e de Paris, CNRS, IRIF (Institut de Recherche en Informatique Fondamentale, UMR8243)} 
\author{Philippe Nadeau}
\address{Universit\'e de Lyon, CNRS, Institut Camille Jordan,  UMR5208, 69622 Villeurbanne Cedex, France.}
\newcommand{\eps}{\epsilon}
\newcommand{\lra}{\longrightarrow}
\newcommand{\ba}{\mathbf{a}}
\newcommand{\bb}{\mathbf{b}}
\newcommand{\bc}{\mathbf{c}}
\newcommand{\bm}{\mathbf{m}}
\newcommand{\bs}{\mathbf{s}}
\newcommand{\bt}{\mathbf{t}}
\newcommand{\bu}{\mathbf{u}}
\newcommand{\bv}{\mathbf{v}}
\newcommand{\bw}{\mathbf{w}}
\newcommand{\bx}{\mathbf{x}}
\newcommand{\by}{\mathbf{y}}
\newcommand{\bT}{\mathbf{T}}
\newcommand{\bB}{\mathbf{B}}
\newcommand{\bNC}{\mathbf{NC}}
\newcommand{\bk}{k}
\newcommand{\ds}{\mathbbm{s}}
\newcommand{\dt}{\mathbbm{t}}
\newcommand{\du}{\mathbbm{u}}
\newcommand{\dv}{\mathbbm{v}}
\newcommand{\dx}{\mathbbm{x}}
\newcommand{\dT}{\mathbbm{T}}
\newcommand{\df}{\mathbbm{f}}
\newcommand\dif{\mathop{}\!\mathrm{d}}
\newcommand{\Dbm}{\mathbf{D}}
\newcommand{\Alg}{\mathcal{A}}
\newcommand{\Dual}{\mathcal{P}}
\newcommand{\tsh}{\mathrel{\tilde{\shuffle}}}
\DeclareMathOperator{\nc}{nc}
\DeclareMathOperator{\bnc}{\mathbf{nc}}
\DeclareMathOperator{\TT}{T}
\DeclareMathOperator{\tr}{tr}
\DeclareMathOperator{\im}{im}
\newcommand{\lt}{\ell_T}
\newcommand{\lb}{[\![}
\newcommand{\rb}{]\!]}
\begin{document}


\begin{abstract}
The dual braid monoid was introduced by Bessis in his work on complex reflection arrangements. The goal of this work is to show that Koszul duality provides a nice interplay between the dual braid monoid and the cluster complex introduced by Fomin and Zelevinsky.  Firstly, we prove koszulity of the dual braid monoid algebra, by building explicitly the minimal free resolution of the ground field.  This is done explicitly using some chains complexes defined in terms of the positive part of the cluster complex.  Secondly, we derive various properties of the quadratic dual algebra.  We show that it is naturally graded by the noncrossing partition lattice.  We get an explicit basis, naturally indexed by positive faces of the cluster complex.  Moreover, we find the structure constants via a geometric rule in terms of the cluster fan.  Eventually, we realize this dual algebra as a quotient of a Nichols algebra.  This latter fact makes a connection with results of Zhang, who used the same algebra to compute the homology of Milnor fibers of reflection arrangements.
\end{abstract}

\maketitle

\section{Introduction}

\subsection{Context}

The {\it dual braid monoid} $\Dbm(W)$ of a finite Coxeter group $W$ (with respect to a Coxeter element $c$) was introduced by Bessis~\cite{Bes03}, as a set of positive elements inside the braid group $\bB(W)$.  In its more general form associated to a well-generated complex reflection group, this monoid turned out to be an important tool in Bessis' solution of the $K(\pi,1)$ problem for complex reflection arrangements~\cite{Bes15}.  This monoid has a rich structure: just like the braid group it can be defined algebraically or topologically, and it is a {\it Garside monoid}~\cite{DDGKM15}, a property that implies in particular the existence of canonical factorizations of each element.  A combinatorial byproduct is the definition of {\it generalized noncrossing partitions}, also known as {\it simple braids}.  They were introduced independently by Brady and Watt~\cite{BWKP1}, also in the context of the $K(\pi,1)$ problem for finite type Artin groups.  The same authors proved in~\cite{BWLattice} that the poset $NC(W)$ of noncrossing partitions is a lattice, a property that is an important ingredient of the Garside structure of $\Dbm(W)$.  The cardinality of $NC(W)$ is a generalized Catalan number, called {\it $W$-Catalan number}, see~\cite{Armstrong06} for a survey.  

In the context of cluster algebras and Zamolodchikov’s conjecture about $Y$-systems, Fomin and Zelevinsky~\cite{FZ03} introduced the {\it cluster complex} $\Delta(\Phi)$ of a finite type root system $\Phi$.  Its vertices can be identified with {\it cluster variables} of the cluster algebra of type $\Phi$, and its facets with {\it clusters} of the same cluster algebra.  This complex can be realized geometrically using a correspondence between cluster variables and {\it almost positive roots}:  each cluster correspond to a simplicial cone and together they form a complete simplicial fan, the {\it cluster fan}. Fomin and Zelevinsky~\cite{FZ03} also conjectured that this fan is the normal fan of a polytope.  These polytopes, called the {\it generalized associahedra}, were constructed by them in a joint work with Chapoton~\cite{CFZ05}.  The number of clusters (equivalently, the number of vertices in the generalized associahedron) turns out to be the $W$-Catalan number associated to the Weyl group $W$ of $\Phi$.  Following this observation, there has been an important combinatorial interplay between noncrossing partitions and clusters (see for example~\cite{Armstrong06,AthaEnum,AthaBrad06,ABW07,Cha05,McCammond06}).

A more specific connection is the link obtained in~\cite{AthaEnum,Cha05} between the characteristic polynomial of $NC(W)$ and the $f$-polynomial of $\Delta^+(W)$.  Here, $\Delta^+(W)$ is the {\it positive part} of $\Delta(W)$, the full subcomplex obtained by keeping only positive roots as its vertices (and we refer to the group $W$ rather than its root system).  It can be stated as follows:
\begin{equation} \label{eq:muNC}
 \sum_{F \in \Delta^+(W)} (-q)^{\dim(F)+1}
 =
 \sum_{ w \in NC(W) }  \mu_{NC}(w) q^{\lt(w)} 
\end{equation}
where $\mu_{NC}$ and $\lt$ are respectively the Möbius function and the rank function of $NC(W)$, see next section for details.   This is particularly relevant in the context of the dual braid monoid, as the {\it growth function} of $\Dbm(W)$ is the inverse of the polynomial in~\eqref{eq:muNC}. This is part of the Cartier--Foata theory~\cite{Car69}; see also Ishibe and Saito~\cite{IS17} for a recent exposition and related developments.

Explicitly, the growth function $\sum_{\bb\in\Dbm(W)} q^{\deg(b)}$ of $\Dbm(W)$ is given by
\begin{equation} \label{eq:growth}
 \sum_{\bb\in\Dbm(W)} q^{\deg(b)} = \Big( \sum_{\bb \in \Dbm } \mu_{\Dbm(W)} (\bb) q^{|\bb|} \Big)^{-1}
\end{equation}
where $\mu$ and $|.|$ are respectively the Möbius function and the rank function of $\Dbm(W)$, endowed with divisibility order.  Following Albenque and Nadeau~\cite{Alb09}, the Möbius function of $\Dbm(W)$ vanishes outside the set of simple braids, so that the right hand sides of \eqref{eq:muNC} and \eqref{eq:growth} are inverse of each other.  We refer to the next section for details.

\subsection{Outline of the results}

We will see through this work that there is an algebraic relation between the dual braid monoid and the positive part of the cluster complex, via the notion of {\it Koszul duality}~\cite{Fro99,Kra05,PP05}.  

Consider the monoid algebra $\bk[\Dbm(W)]$ over a ground field $\bk$.  Throughout, it will be denoted:
\[
  \Alg(W) := \bk[\Dbm(W)].
\]
It follows from the algebraic definition of $\Dbm(W)$ that $\Alg(W)$ is a {\it quadratic algebra}, and in particular a connected graded algebra.  This gives $\bk$ a structure of $\Alg(W)$-module via the {\it augmentation map} $\epsilon$, which by definition is the projection on the degree $0$ component.  Koszulity of $\Alg(W)$ is then characterized by a property of the minimal free resolution of $\bk$, namely all boundary maps have homogeneous degree 1.  

Our first goal is to show:

\begin{theo}  \label{theo:koszul_intro}
  $\Alg(W)$ is a Koszul algebra. 
\end{theo}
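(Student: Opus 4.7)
My plan is to exhibit an explicit minimal free resolution of $\bk$ as a left $\Alg(W)$-module whose $k$-th term is
$$ P_k \;=\; \bigoplus_{\substack{F \in \Delta^+(W) \\ |F|=k}} \Alg(W)\cdot e_F, $$
with each generator $e_F$ placed in internal degree $k$ and with $P_0 = \Alg(W)$ corresponding to the empty face. By the standard characterization of Koszulity (see for example \cite{PP05}), the existence of such a resolution with differentials homogeneous of internal degree $1$ is equivalent to the theorem.

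The numerics are already forced by the excerpt: the alternating sum of Hilbert series of the $P_k$ equals
$$ H_{\Alg(W)}(q) \cdot \sum_{F \in \Delta^+(W)}(-q)^{|F|}, $$
which by~\eqref{eq:muNC} and~\eqref{eq:growth} collapses to $1$, matching the Euler characteristic of a resolution of $\bk$ in the required internal degrees. So the candidate complex has at least the right size.

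Next I would define the boundary maps. Vertices of $\Delta^+(W)$ are positive roots $\alpha$ corresponding to reflections $t_\alpha\in\Dbm(W)$, which are degree-one elements of $\Alg(W)$, so on degree grounds $d(e_F)$ must be an $\Alg(W)$-linear combination of reflections times $e_{F\setminus\{\alpha\}}$ for $\alpha\in F$. I would pin down the precise coefficients by imposing $d^2 = 0$: this reduces, face by face, to the quadratic dual braid relations on each $2$-face of $\Delta^+(W)$, with the choice of signs and of specific reflections dictated by an orientation of the cluster fan. The challenge here is global consistency, i.e.\ that the local equations on the $2$-skeleton admit a simultaneous solution indexed by all faces of $\Delta^+(W)$; the rigidity of the cluster complex should make the extension essentially unique up to an overall sign convention.

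The main obstacle, and the true content of the theorem, is proving exactness of the resulting complex. I would approach this by filtering each internal-degree strand of $P_\bullet$ via the Garside normal form in $\Dbm(W)$: every element factors canonically as a product of simple braids (elements of $NC(W)$), and this filtration should be compatible with the differential. On each associated graded piece I would construct a contracting homotopy through a discrete Morse matching, pairing a basis vector $\bb\cdot e_F$ with $\bb'\cdot e_{F'}$, where $F$ and $F'$ differ by a canonical ``leading reflection'' extracted from $\bb$. Assembling these local matchings into a global acyclic one would rely on the lattice structure of $NC(W)$ and on the combinatorics of positive faces of $\Delta^+(W)$, presumably via induction on the rank of $W$ and reduction to standard parabolic subgroups.
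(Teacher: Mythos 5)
Your setup is the right one: the resolution in the paper is exactly $P_k=\Alg\otimes\bk^{\Delta^+_{k-1}}$, the differentials do have the shape you describe, and the Euler-characteristic check is consistent with~\eqref{eq:muNC} and~\eqref{eq:growth}. But the proposal has a genuine gap precisely at the point you yourself identify as ``the true content of the theorem'': exactness is never actually proved, only a speculative strategy (Garside filtration, discrete Morse matching, induction on rank) is announced, with every step flagged as ``should be'' or ``presumably''. None of these steps is carried out, and the specific mechanism (a matching pairing $\bb\cdot e_F$ with $\bb'\cdot e_{F'}$ via a ``canonical leading reflection'') is not defined, so there is no argument to check.

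The paper's actual mechanism is different and is the idea you are missing. First, the differential is not pinned down by $d^2=0$ alone: one must take $\partial_j(\bb\otimes f)=\sum_i(-1)^i\,(\bb\cdot\bt_i^{\bt_{i-1},\dots,\bt_0})\otimes(f\setminus\{t_i\})$, i.e.\ the reflection multiplied on is the \emph{conjugate} $t_0\cdots t_{i-1}t_it_{i-1}\cdots t_0$, not $t_i$ itself; this twist is what makes $\bt_i^{\bt_{i-1},\dots,\bt_0}\cdot\bnc(f\setminus\{t_i\})=\bnc(f)$ hold in $\Dbm$. Second, with this formula the whole complex splits as a direct sum of subcomplexes $\Theta_\bullet(\bb)$ indexed by $\bb\in\Dbm\setminus\{\mathbf{1}\}$, where $\Theta_j(\bb)$ is spanned by the $\ba\otimes f$ with $\ba\cdot\bnc(f)=\bb$. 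Each summand is then shown to be $\bk$-linearly isomorphic to the augmented simplicial chain complex of $\Delta^+(w)$, where $\bw$ is the greatest common right divisor of $\bb$ and $\bc$; exactness follows because $\Delta^+(w)$ is a ball, hence has vanishing reduced homology. So the heavy lifting is done by the contractibility of the positive cluster subcomplexes (Proposition~\ref{prop:deltaplusw}), not by a Morse matching on the resolution. If you want to salvage your outline, the missing idea is this direct-sum decomposition over monoid elements together with the identification of each summand with a chain complex of a contractible complex; without it, exactness remains unproven.
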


To describe our method, introduce for $-1 \leq i\leq n-1$ the free $\Alg(W)$-module with basis indexed by $\Delta^+_i(W)$, the set of $i$-dimensional faces in $\Delta^+(W)$.  This free module is denoted $\mathcal{C}_i $.  

\begin{prop}
  There exist explicit boundary maps $\partial_i : \mathcal{C}_i \lra \mathcal{C}_{i-1}$ such that the minimal free resolution of $\bk$ is
  \[
      0 \lra \mathcal{C}_{n-1} \stackrel{\partial_{n-1}}{\lra} \dots \stackrel{\partial_0}{\lra} \mathcal{C}_{-1} \stackrel{\epsilon}{\lra} \bk \lra 0.
  \]
\end{prop}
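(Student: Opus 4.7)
The plan is to construct the boundary maps $\partial_i$ explicitly on the basis $\Delta^+_i(W)$, verify that they form a complex, and then establish exactness and minimality. On a face $F=\{\beta_0,\dots,\beta_i\}\in\Delta^+_i(W)$ (with vertices linearly ordered by some fixed convention, e.g.\ a chosen total order on positive roots) I would set
\[
  \partial_i(F) = \sum_{j=0}^{i} \eps_j(F)\, t_{\beta_j}\cdot\bigl(F\setminus\{\beta_j\}\bigr),
\]
where $t_{\beta_j}$ denotes the reflection associated with the positive root $\beta_j$, viewed as a degree-$1$ generator of $\Alg(W)$, and the signs $\eps_j(F)\in\{\pm 1\}$ come from a coherent orientation on faces of the cluster complex. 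Since every matrix entry is homogeneous of degree $1$ and lies in the augmentation ideal, once exactness is proved, minimality is automatic and the degree-$1$ homogeneity of each $\partial_i$ yields Koszulity of $\Alg(W)$ (Theorem~\ref{theo:koszul_intro}).

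The identity $\partial_{i-1}\circ\partial_i=0$ reduces to a pairwise verification: for each pair $\beta_j,\beta_k$ of vertices of $F$, one must check that $t_{\beta_j}t_{\beta_k}$ and $t_{\beta_k}t_{\beta_j}$ — with appropriate signs — cancel in $\Alg(W)$. The relevant quadratic relations of the dual braid monoid are controlled by pairs of reflections whose product is a simple braid of rank~$2$, and the input is that any edge of $\Delta^+(W)$ determines exactly such a length-two simple braid, so its two orderings become equal in $\Alg(W)$. With a coherent orientation convention, this step should follow directly from the quadratic presentation of $\Dbm(W)$.

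The heart of the proof is exactness. I would try to build an explicit contracting homotopy $h_i:\mathcal{C}_i\to\mathcal{C}_{i+1}$ (as graded $\bk$-linear maps) satisfying $\partial_{i+1}h_i+h_{i-1}\partial_i=\mathrm{id}$ in positive homological degrees. The natural source for such a homotopy is the Garside structure on $\Dbm(W)$: each $\bb\in\Dbm(W)$ factors uniquely as $\bb=b_1\cdots b_r$ with $b_j$ simple braids in normal form, and the set of reflections below the first letter $b_1$ in the noncrossing partition lattice naturally forms a positive cluster-complex face — this is the combinatorial content underlying~\eqref{eq:muNC}. The homotopy $h_i$ would attach to a pair (basis element of $\Alg(W)$, face $F$) an extended face $F\cup\{\beta\}$, where $\beta$ is a distinguished atom read off from $b_1$; verifying the chain-homotopy identity then becomes a case analysis on whether this distinguished root already lies in $F$ or not.

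Pinning down the combinatorial rule for the distinguished vertex so that both the face condition in $\Delta^+(W)$ and the identity $\partial h+h\partial=\mathrm{id}$ hold on the nose is the main obstacle: it is the step that genuinely fuses the noncrossing and cluster-complex data. Once exactness is established, minimality is immediate and Koszulity follows. As a sanity check, the identity~\eqref{eq:muNC} (together with~\eqref{eq:growth}) forces the ranks of the $\mathcal{C}_i$ to match the Hilbert series of $\Alg(W)$, which is the numerical shadow of the resolution we are building.
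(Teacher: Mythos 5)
There are two genuine gaps here. First, your boundary map is wrong as written: you set the coefficient of $F\setminus\{\beta_j\}$ to be the generator $\bt_{\beta_j}$ itself, and you justify $\partial_{i-1}\circ\partial_i=0$ by claiming that the two orderings of a length-two simple braid ``become equal in $\Alg(W)$''. They do not: the defining relation of $\Dbm(W)$ is the twisted commutation $\bt\bu=\bu\bt^{\bu}$, not $\bt\bu=\bu\bt$, so with your coefficients the pairwise cancellation fails. The correct formula (the one the paper uses) multiplies $\bb$ on the right by the generator corresponding to the \emph{conjugated} reflection $t_0\cdots t_{i-1}t_it_{i-1}\cdots t_0$ when the $i$-th vertex of $f=\{t_0\succ\dots\succ t_i\}$ is deleted; this twist is exactly what makes the coefficient times $\bnc(f\setminus\{t_i\})$ equal to $\bnc(f)$ in the monoid, which is what drives both $\partial^2=0$ and the exactness argument. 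Without the conjugation your complex is not even a complex.

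Second, the heart of the proof --- exactness --- is left as a plan rather than carried out: you propose a contracting homotopy built from the Garside normal form but concede that the combinatorial rule for the distinguished vertex is not pinned down, and there is also an unaddressed left/right mismatch (the first factor of the normal form is a left divisor, whereas the faces appearing in $\partial$ contribute right divisors of $\bb$). The paper avoids homotopies altogether: for each fixed $\bb\in\Dbm$ it isolates the subspace $\Theta_\bullet(\bb)$ spanned by the $\ba\otimes f$ with $\ba\cdot\bnc(f)=\bb$, shows the complex splits as the direct sum of these over $\bb\neq\mathbf{1}$, and identifies each summand with the augmented simplicial chain complex of $\Delta^+(w)$, where $\bw$ is the greatest common right divisor of $\bb$ and $\bc$; exactness then follows because each $\Delta^+(w)$ is a ball, hence has trivial reduced homology. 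Your numerical ``sanity check'' via \eqref{eq:muNC} only constrains the Euler characteristics of the graded pieces and cannot substitute for this step. The parts of your write-up that are fine are the framing remarks: once exactness and the degree-$1$ homogeneity of the maps are in place, minimality and Koszulity are indeed immediate.
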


The nontrivial part consists in checking that the complex is exact.  This is done by seeing it as a direct sum of exact complexes, where each summand is the chain complex of a (topologically trivial) subcomplex of $\Delta^+(W)$.   As these maps are homogeneous of degree 1, we immediately get Theorem~\ref{theo:koszul_intro}.

Our second goal is to study the quadratic dual of $\Alg(W)$.  Throughout, it will be denoted
\[
  \Dual(W) := \Alg(W)^!.
\]
From the construction or the minimal resolution, it follows that the Hilbert series of this algebra is the $f$-polynomial of $\Delta^+(W)$:
\[
  \sum_{i=0}^{n} \dim( \Dual_i(W) ) q^i
  =
  \sum_{F \in \Delta^+(W)} q^{\dim(F)+1}.
\]  
  Equation~\eqref{eq:growth} can thus be interpreted as the relation between the Hilbert series of a Koszul algebra and that of its quadratic dual.

A presentation of $\Dual(W)$ is easily obtained in terms of the presentation of $\Alg(W)$ (see Theorem~\ref{theo:dual_presentation}).  We will also show that $\Dual(W)$ is naturally graded by $NC(W)$, and give the construction of an explicit basis indexed by $\Delta^+(W)$. 

In Section~\ref{sec:dualproduct}, we obtain a formula for the structure constants of $\mathcal{P}(W)$ with respect to the basis obtained in the preceding section.  This is a geometric rule that relies on the cluster fan.  More precisely, each face of the complex $\Delta^+(W)$ corresponds to a cone in this fan, and finding the expansion of a product in the algebra is given by such cones contained in some bigger cone.

In Section~\ref{sec:shuffle}, we introduce a {\it Nichols algebra} $\mathcal{N}(W)$ that is particularly relevant: it will be shown that $\Dual(W)$ is a quotient of $\mathcal{N}(W)$ (Theorem~\ref{theo_iso_nicholsquotient}), and some properties are more easily seen from this construction than from the presentation of $\Dual(W)$.  This point of view makes the link with Zhang's thesis~\cite{Zhang20} where the same algebra was introduced in order to compute the homology of Milnor fibers of reflection arrangements.  In particular, we explain how this give an alternative path to the koszulity of $\Alg(W)$ and $\Dual(W)$.

In Section~\ref{sec:cyclic_action}, we investigate the cyclic action generated by the Coxeter element on the algebras $\Alg(W)$ and $\Dual(W)$.  We get explicit formulas for the characters.  We get a new simple proof of a result of Zhang's thesis~\cite{Zhang20} that makes a connection with the homology of the noncrossing partition lattice. 

\subsection{Organization of the paper}

Sections~\ref{sec:dualbraidmonoid} and \ref{sec:quadratic} contains background material.  In the former, we recall preliminary notions related with finite Coxeter groups, noncrossing partitions, the dual braid monoid and its growth function.  In the latter, we recall the concept of Koszul duality and the role of the quadratic dual in this context.

In Section \ref{sec:koszulity}, we show that $\mathcal{A}(W)$ is a Koszul algebra (Corollary~\ref{coro:koszulity}), via the explicit construction of the minimal free resolution of ${\bk}$ (Theorem~\ref{theo:koszulity}).  This section relies on Appendix~\ref{sec:clustercomplex}, where we gather relevant material and bibliography about the cluster complex. 

In Sections~\ref{sec:propertiesdual}--\ref{sec:cyclic_action}, we give various properties of the dual algebra $\mathcal{P}(W)$, as outlined above.

Finally, in Section~\ref{sec:questions} we discuss possible extensions of this work to other kind of braid groups, beyond finite type Artin groups.   
\smallskip

{\bf Note:} While this manuscript was in the latter stages of redaction, the authors were made aware of the PhD thesis recently defended by Yang Zhang~\cite{Zhang20}. Quite remarkably, the algebras $\Dual(W)$ are also introduced in this work, albeit via a completely different path. In particular, it does not arise as the quadratic dual of $\mathcal{A}(W)$. We will mention the results of this work pertinent to ours in the course of the manuscript.

\section{The dual braid monoid}
\label{sec:dualbraidmonoid}

We review useful definitions and properties, and refer to~\cite{Humphreys90} for basic facts about finite Coxeter groups.  In the rest of this work, $(W,S)$ is a Coxeter system of rank $n$, which means that $W$ is finite and $S$ has cardinality $n$. The neutral element of $W$ is denoted $e$.  As is well-known, $W$ can be realized as a finite reflection group.

Let $T=\{\; wsw^{-1} \; | \; s\in  S, \; w\in W \}$ be the set of \textit{reflections} of $W$.  For $t \in T$, $w\in W$, let
\[
   t^w := w^{-1} t w \in T.  
\]
Note that $t^{w_1w_2} = (t^{w_1})^{w_2}$.  We  also fix a {\em standard Coxeter element $c$} in $W$, which by definition is the product of all simple reflections in some arbitrary order.  We can index $S=\{s_1,\dots,s_n\}$ so that $c=s_1\cdots s_n$.  There are various objects defined below that depend on $W$ and $c$.  In general, we omit the dependence in $c$.

Via the standard geometric representation, we see $W$ as a subgroup of the orthogonal group $O(\mathbb{R}^n)$.  For $t\in T$, we denote $\rho(t) \in \mathbb{R}^n$ the associated positive root (for a fixed choice of a generic positive half-space $\Pi \subset \mathbb{R}^n$).  

Each parabolic subgroup $P\subset W$ is seen as a reflection group in a linear subspace $V\subset \mathbb{R}^n$, where $V= \operatorname{Fix(P)}^{\perp} $ and $\operatorname{Fix(P)} = \{ v\in\mathbb{R}^n \;|\; \forall w\in P, \; w(v)=v \}$.  The half-space $\Pi\cap V$ endows $P$ with a natural choice of a set of positive roots, hence of a set of simple generators. 

\begin{lemm} \label{lem:rank2} 
Let $P\subset W$ be a rank 2 parabolic subgroup.  Its reflections can be indexed by $P \cap T=\{u_1,\dots,u_m\}$ in such a way that:
\begin{itemize}
    \item $u_{i+1} u_{i} = u_{i}u_{i-1}$ for $1\leq i \leq m$ (with $u_0=u_m$, i.e., indices are taken modulo $m$),
    \item the simple reflections of $P$ are $u_1$ and $u_m$.
\end{itemize}
\end{lemm}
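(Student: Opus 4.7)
The plan is to exploit the fact that a rank 2 parabolic subgroup $P$ of $W$ is a dihedral reflection group. Concretely, by the classification of finite rank 2 reflection groups, $P$ is of type $I_2(m)$ acting faithfully on the plane $V=\Fix(P)^{\perp}\cong\mathbb{R}^2$. The $m$ reflections of $P$ correspond bijectively to the $m$ reflection lines through the origin in $V$, and these lines are evenly spaced by angle $\pi/m$.

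First I would fix an orientation on $V$ and use the half-plane $\Pi\cap V$ to select, on each reflection line, a positive root direction; this is possible because $\Pi\cap V$ is an open half-plane. All $m$ positive roots then lie in an open arc of angular extent $(m-1)\pi/m<\pi$. I index the reflections $u_1,\dots,u_m$ so that the arguments $\alpha_i$ of the positive roots $\rho(u_i)$ are strictly increasing; by the equal spacing of the lines, $\alpha_{i+1}-\alpha_i=\pi/m$ for $1\le i\le m-1$. With this ordering, the second bullet is immediate: the simple reflections of $P$ are those whose positive roots are extremal in the positive cone of $P$, and by construction these are exactly $u_1$ and $u_m$.

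For the first bullet, I recall the standard fact that the composition of two reflections in $O(\mathbb{R}^2)$ through lines at angles $\phi_1$ and $\phi_2$ (applied in that order) is the rotation by $2(\phi_2-\phi_1)$. Hence for $2\le i\le m-1$, both $u_{i+1}u_i$ and $u_iu_{i-1}$ equal the same rotation $r$ of $V$ by angle $2\pi/m$. The only case requiring a bit of thought is the wraparound, i.e.\ verifying $u_2u_1=u_1u_m$ for $i=1$ and $u_1u_m=u_mu_{m-1}$ for $i=m$. Here $u_1u_m$ is a rotation by $2(\alpha_1-\alpha_m)=-2(m-1)\pi/m$, which is congruent to $2\pi/m$ modulo $2\pi$ and hence again equals $r$. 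This finishes the verification; beyond the modular angle bookkeeping, the lemma is essentially just the standard dihedral symmetry and I do not expect any real obstacle.
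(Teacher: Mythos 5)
Your proof is correct. Note that the paper itself explicitly omits the proof of this lemma, so there is no in-text argument to compare against; your dihedral-geometry argument (equally spaced mirrors in the plane $V=\operatorname{Fix}(P)^{\perp}$, positive roots ordered by angle within the open half-plane $\Pi\cap V$, products of consecutive reflections all equal to the rotation by $2\pi/m$, with the wraparound handled modulo $2\pi$) is exactly the standard reasoning the authors presumably had in mind, and the identification of $u_1$ and $u_m$ as the simple reflections via extremality of their roots in the positive cone is also correct.
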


We omit the proof. This lemma is particularly useful to deal with reflection orderings, see~\cite{Dye93}.  Note that reversing the order of the indexing of $P \cap T$ also gives a valid indexing, and there are only two of them.

\subsection{Noncrossing partitions}

Armstrong's work~\cite{Armstrong06} is a standard reference about this subject.  
For $w\in W$, the {\it absolute length} or {\it reflection length} of $w$ is:
\[
  \lt(w) := \min \big\{ k \geq 0 \; \big| \; \exists t_1,\dots,t_k \in T, \; t_1\cdots t_k=w \big\}.
\]
Since $T$ generates $W$, $\lt$ takes finite values.  A factorization $w=t_1\cdots t_k$ of $w\in W$ as a product of reflections is called {\it reduced} or {\it minimal} if $k=\lt(w)$.

\begin{lemm}[Carter~\cite{Car72}] \label{lin_indep}
Suppose that we have a reduced factorization $w=t_1\cdots t_k$ where $\lt(w)=k$, $t_i\in T$. 
Then $\rho(t_1), \dots, \rho(t_k)$ are linearly independent.
\end{lemm}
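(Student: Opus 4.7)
The plan is to argue through the geometric representation by squeezing a chain of inequalities. Set $V := \operatorname{span}(\rho(t_1),\dots,\rho(t_k))\subseteq\mathbb{R}^n$. Each reflection $t_i$ acts as the identity on $\rho(t_i)^\perp$, hence in particular on $V^\perp$; therefore the product $w=t_1\cdots t_k$ fixes $V^\perp$ pointwise, so $V^\perp\subseteq\Fix(w)$ and
\[
  \operatorname{codim}\Fix(w) \;\leq\; \dim V \;\leq\; k.
\]
The linear independence I want is precisely $\dim V = k$, so it will be enough to prove the general inequality $\lt(w)\leq\operatorname{codim}\Fix(w)$: combined with the hypothesis $\lt(w)=k$, it yields the sandwich $k=\lt(w)\leq\operatorname{codim}\Fix(w)\leq\dim V\leq k$, forcing equality throughout.

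To establish $\lt(w)\leq\operatorname{codim}\Fix(w)$ I would induct on $d:=\operatorname{codim}\Fix(w)$. The base $d=0$, i.e.\ $w=e$, is trivial. For the inductive step the key ingredient is Steinberg's fixed-point theorem: the pointwise stabilizer $W_{\Fix(w)}:=\{\,u\in W \mid u|_{\Fix(w)}=\operatorname{id}\,\}$ is itself a finite reflection group, generated by those reflections of $T$ whose root lies in $\Fix(w)^\perp$. Since $w\in W_{\Fix(w)}$ acts without fixed vectors on $\Fix(w)^\perp$, one can find a reflection $t\in T\cap W_{\Fix(w)}$ with $\Fix(wt)\supsetneq\Fix(w)$; the inductive hypothesis applied to $wt$ then gives $\lt(wt)\leq d-1$, whence $\lt(w)\leq \lt(wt)+1\leq d$.

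The step I expect to be the main obstacle is the production of the auxiliary reflection $t$ in the inductive argument: writing $t$ down concretely is not completely obvious, and its existence genuinely relies on Steinberg's theorem, which is where one uses that $W$ is a finite reflection group rather than an arbitrary subgroup of the orthogonal group. Once this is granted the rest is a bookkeeping exercise with codimension and the squeeze argument of the first paragraph. An alternative combinatorial route would be to induct directly on $k$ and use the conjugation identity $t_it_{i+1}=t_{i+1}\cdot(t_{i+1}^{-1}t_it_{i+1})$ to migrate a minimal linear dependence to consecutive positions and produce a cancellation, but I find the geometric approach above more transparent.
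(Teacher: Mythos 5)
The paper offers no proof of this lemma --- it is quoted from Carter~\cite{Car72} --- so there is nothing internal to compare against; I will therefore just assess your argument. It is correct in structure and is essentially the standard proof (Carter's original argument, in the language of the ``moved space'' $\Fix(w)^\perp=\im(w-\mathrm{id})$ as in Brady--Watt). The squeeze $k=\lt(w)\leq\operatorname{codim}\Fix(w)\leq\dim V\leq k$ is exactly right, and you correctly isolate the only nontrivial input, namely $\lt(w)\leq\operatorname{codim}\Fix(w)$, and correctly observe that it genuinely needs finiteness (it fails for affine Coxeter groups), entering through Steinberg's fixed-point theorem.

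The one step you assert without justification is that the reflection $t$ produced by Steinberg satisfies $\Fix(wt)\supsetneq\Fix(w)$; your stated reason (``$w$ acts without fixed vectors on $\Fix(w)^\perp$'') is true but is not what does the work. What you need, and what is true, is the following linear-algebra fact about orthogonal maps: if $t$ is a reflection with root $\alpha$, then $\dim\Fix(wt)=\dim\Fix(w)+1$ when $\alpha\in\Fix(w)^\perp=\im(w-\mathrm{id})$, and $\dim\Fix(wt)=\dim\Fix(w)-1$ otherwise. (Writing $wtv=v$ as $(w^{-1}-\mathrm{id})v=-2\langle v,\alpha\rangle\alpha/|\alpha|^2$, the solution set lies in the preimage of $\mathbb{R}\alpha$ under $w^{-1}-\mathrm{id}$, which has dimension $\dim\Fix(w)+1$ precisely when $\alpha\in\im(w-\mathrm{id})$, and a short computation using $\langle u,(w-\mathrm{id})u\rangle=-\tfrac12|(w-\mathrm{id})u|^2$ shows the scalar condition is then automatic.) Since any reflection in the pointwise stabilizer of $\Fix(w)$ has its root in $\Fix(w)^\perp$, \emph{every} reflection supplied by Steinberg works, and the induction closes. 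With that computation supplied, your proof is complete.
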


The \textit{absolute order} $\leq_T$ on $W$ is defined by $w\leq_T z$ if $\lt(w)+\lt(w^{-1}z)=\lt(z)$.   The order $\lt$ can also be characterized by the {\it subword property}: $w\leq_T z$ if and only if some reduced factorization of $w$ can be extracted as a subword of a reduced factorization of $z$ as a product of reflections.  See~\cite[Section~2.5]{Armstrong06}.

\begin{defi} \label{defi:NC}
We define the poset $NC(W)=NC(W,c)$ as the interval $[e,c]$ with respect to the partial order $\leq_T $.  This is a ranked poset with rank function $\lt$.  We denote $NC_j(W) \subset NC(W)$ the subset of elements of rank $j$. 
\end{defi}

Note that $\lt$ and $\leq_T$ are invariant under conjugation.  Because all Coxeter elements are conjugated, the isomorphism type of $NC(W)$ does not depend on $c$.

An important property is the following:

\begin{prop}[\cite{BWLattice}]
The poset $NC(W)$ is a lattice.
\end{prop}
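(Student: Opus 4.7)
The plan is to adopt the Brady--Watt geometric framework. To each $w \in W$ attach the \emph{moved space} $U(w) := \im(w - \operatorname{id}_V) \subseteq V$. For any reduced factorization $w = t_1 \cdots t_k$ one has $U(w) \subseteq \operatorname{span}(\rho(t_1), \dots, \rho(t_k))$, so Lemma~\ref{lin_indep} yields $\dim U(w) = \lt(w)$. The key input is then the Brady--Watt characterization of the absolute order: $u \leq_T v$ if and only if $U(v) = U(u) \oplus^{\perp} U(u^{-1}v)$. In particular $w \mapsto U(w)$ is order-preserving from $(W, \leq_T)$ to the poset of subspaces of $V$.

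The decisive structural fact, and the place where the choice of Coxeter element $c$ intervenes, is that $w \mapsto U(w)$ is \emph{injective} on $NC(W) = [e,c]$: an element of $[e,c]$ is determined by its moved space. Granting this, I would define the meet of $u, v \in NC(W)$ as the unique $w \in [e,c]$ with $U(w) = U(u) \cap U(v)$. That such a $w$ is a lower bound, and in fact the greatest one, follows from the Brady--Watt characterization together with monotonicity of $U$: any common lower bound $z \in [e,c]$ satisfies $U(z) \subseteq U(u) \cap U(v) = U(w)$, and injectivity then forces $z \leq_T w$.

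For existence of this $w$ I would argue by induction on the rank $n$. Pick a reflection $t \leq_T c$ with $U(t) \subseteq U(u) \cap U(v)$ (the trivial intersection giving $w = e$); a standard parabolic-type reduction identifies the interval $[t,c] \subseteq NC(W)$ with $NC(W')$ for a Coxeter system of rank $n-1$ acting on $U(t)^{\perp}$, and the induction hypothesis produces the required meet there, which lifts back to $[e,c]$. Joins then come for free from self-duality: the Kreweras-type involution $w \mapsto w^{-1}c$ is an order-reversing bijection on $NC(W)$ and hence sends meets to joins. The main obstacle is precisely the injectivity of $w \mapsto U(w)$ on $[e,c]$, together with the realizability of arbitrary intersections $U(u) \cap U(v)$ as moved spaces of elements of $[e,c]$: both rely crucially on $c$ being a Coxeter element, since elements of $W$ further from $c$ can freely share a moved space. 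This is the Coxeter-theoretic heart of the proposition.
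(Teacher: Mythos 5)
The paper does not actually prove this proposition---it is quoted from Brady--Watt \cite{BWLattice}---so your proposal must be judged against that proof, and it contains a step that is genuinely false. The problem is the ``realizability'' claim: it is \emph{not} true that $U(u)\cap U(v)$ is the moved space of some $w\in[e,c]$, even though you correctly identify this as the crux. Concretely, take $W=\mathfrak{S}_4$, $c=(1,2,3,4)$, $u=(1,2)(3,4)$ and $v=(1,4)(2,3)$. Both lie in $NC_2(W)$ (the factorizations $u\cdot(2,4)=c$ and $v\cdot(1,3)=c$ are reduced). Their moved spaces are $\operatorname{span}(e_1-e_2,\;e_3-e_4)$ and $\operatorname{span}(e_1-e_4,\;e_2-e_3)$, whose intersection is the line spanned by $e_1-e_2+e_3-e_4$. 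This line is not spanned by a root, so no reflection of $W$---and hence no element of $W$ with one-dimensional moved space---realizes it. The meet of $u$ and $v$ in $NC(W)$ is $e$, whose moved space is $\{0\}\subsetneq U(u)\cap U(v)$. The same example breaks your induction step: there is no reflection $t\leq_T c$ with $\rho(t)\in U(u)\cap U(v)$ even though the intersection is nonzero.

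What is true (Brady--Watt's earlier work on the orthogonal group) is that inside $O(V)$ the interval $[I,c]$ is isomorphic, via $B\mapsto \im(B-I)$, to the full lattice of subspaces of $V$; this gives the injectivity you invoke and embeds $NC(W)=W\cap[I,c]$ into a lattice. But a subposet of a lattice need not be a lattice, and the example above shows that $NC(W)$ is not closed under the ambient meet: the unique $B\in O(V)$ with $B\leq c$ and $\im(B-I)=U(u)\cap U(v)$ simply fails to lie in $W$. The whole content of the proposition is that the set of common lower bounds \emph{within $W$} still has a maximum, and Brady--Watt's argument for this is a separate, more delicate induction on rank (controlling the sets of reflections below elements of $[e,c]$ and constructing joins with reflections); it cannot be reduced to intersecting moved spaces. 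A minor additional slip: for $u\leq_T v$ the decomposition $U(v)=U(u)\oplus U(u^{-1}v)$ is a direct sum but not in general an orthogonal one (already $(1,2)\leq_T(1,2,3)$ in $\mathfrak{S}_3$ gives non-orthogonal summands).
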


Another point about noncrossing partitions is that they can be seen as parabolic Coxeter elements.

\begin{defi}
For $w \in W$, let $\Gamma(w)$ denote the smallest parabolic subgroup of $W$ containing $w$.
\end{defi}

It can be seen that the rank of $\Gamma(w)$ is $\lt(w)$.  Recall that each parabolic subgroup is endowed with a natural set of simple generators, see paragraph preceding Lemma~\ref{lem:rank2}. We also introduce the notation $\TT(w)=\Gamma(w)\cap T=\{t\in T~|~t\leq_T w\}$.  Then $\Gamma(w)$ is a reflection group with reflection set $\TT(w)$.

\begin{prop} \label{parabolic_standard_coxeter_element}
If $c$ is a standard Coxeter element, each $w\in NC(W)$ is a also a standard Coxeter element of $\Gamma(w)$.
\end{prop}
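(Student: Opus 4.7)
The plan is to induct on $k := \lt(w)$. The base cases are immediate: $k=0$ gives $w=e$; $k=1$ gives $w$ a reflection with $\Gamma(w) = \langle w\rangle$, of which $w$ is the unique simple reflection; and $k=n$ forces $w=c$ by the interval structure of $NC(W) = [e,c]$, so $w$ is standard by the very definition of $c$.

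For the inductive step, fix any reduced factorization $w = t_1 \cdots t_k$. By Lemma~\ref{lin_indep}, the positive roots $\rho(t_1), \dots, \rho(t_k)$ are linearly independent and span $V := \Fix(\Gamma(w))^{\perp}$. Hence $\Gamma(w)$ has root system $\Phi' := \Phi \cap V$, with inherited positive system $\Phi'^{+} := \Phi' \cap \Pi$, and its simple reflections are in bijection with the (unique) simple system $\Delta'$ of positive roots of $\Phi'^{+}$. The proposition then reduces to the following claim: there exists a reduced factorization $w = u_1 \cdots u_k$ such that $\{\rho(u_1), \dots, \rho(u_k)\} = \Delta'$. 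Any such factorization exhibits $w$ as the product, in some order, of all simple reflections of $\Gamma(w)$, i.e., as a standard Coxeter element.

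The main tool for proving the claim is the transitivity of the Hurwitz braid action on the set of reduced factorizations of $w \leq_T c$, due to Bessis~\cite{Bes03}. Starting from $(t_1, \dots, t_k)$, the plan is to produce a finite sequence of Hurwitz moves of the form $(t_i, t_{i+1}) \mapsto (t_i t_{i+1} t_i,\, t_i)$ or its inverse, terminating at a factorization whose reflection positive roots form $\Delta'$. A natural monovariant to drive the descent is $\sum_{i=1}^{k} \mathrm{ht}_{\Phi'^{+}}(\rho(t_i))$, the sum of heights in $\Phi'^{+}$: it is minimized exactly on factorizations whose roots form a simple system. Each Hurwitz move acts inside the rank-$2$ parabolic generated by the pair $(t_i, t_{i+1})$, so its effect on the two roots involved is described precisely by Lemma~\ref{lem:rank2}; a move strictly decreases the monovariant whenever the two roots it touches are not already the two extreme rays of that rank-$2$ positive cone.

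The hard part will be the descent argument itself: given a current non-simple factorization, one must locate an \emph{adjacent} pair admitting a strictly height-decreasing Hurwitz move. The subtlety is that obstructions to simplicity of $\{\rho(t_i)\}$ can manifest through non-adjacent entries — one of the $\rho(t_i)$ may be a positive combination of several others lying at arbitrary positions in the sequence — so a preliminary bookkeeping step is needed to propagate such obstructions, via height-preserving Hurwitz moves, into adjacent position before the monovariant can actually be decreased. A cleaner alternative route would be to appeal directly to Bessis's parabolic-Coxeter statement (Proposition~1.6.1 of~\cite{Bes03}) asserting $w$ is a Coxeter element of $\Gamma(w)$ for \emph{some} simple system of $\Phi'$, and then to upgrade the conclusion by showing this simple system must coincide with $\Delta'$, using that the $\rho(t_i)$ all lie in $\Pi$ and that $\Delta'$ is the unique simple system of $\Phi'$ contained in $\Pi$ — though this uniqueness-based shortcut itself still requires a careful analysis in the same spirit as the descent above.
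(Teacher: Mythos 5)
First, note that the paper does not actually prove this proposition: it is quoted from the literature, with the proof deferred to \cite[Proposition~3.1]{BJV18} (and to \cite{IngallsThomas} in the crystallographic case). So the relevant question is whether your argument stands on its own, and it does not: it is a proof \emph{plan} whose central step is missing. Your reduction is correct and well set up --- by Lemma~\ref{lin_indep} the roots of a reduced factorization span the moved space $V$, $\Gamma(w)$ has root system $\Phi'=\Phi\cap V$ with canonical simple system $\Delta'=\Pi\cap\Phi'$ (in the paper's sense), and the proposition is indeed equivalent to producing a reduced factorization $w=u_1\cdots u_k$ with $\{\rho(u_i)\}=\Delta'$. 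Hurwitz transitivity (Lemma~\ref{lemm:transitivity}) and a height monovariant are a reasonable strategy. But the descent --- showing that any factorization whose root set is not $\Delta'$ admits a Hurwitz move strictly decreasing $\sum_i \mathrm{ht}_{\Phi'^+}(\rho(t_i))$ --- is the entire mathematical content of the proposition, and you explicitly leave it undone ("the hard part will be the descent argument itself"). The difficulty is worse than a bookkeeping issue: the monovariant can be locally stuck. If some $\rho(t_i)$ is non-simple in $\Phi'$, the witnessing decomposition $\rho(t_i)=\gamma_1+\gamma_2$ involves positive roots $\gamma_1,\gamma_2$ that need not occur among the $\rho(t_j)$ at all, so it is entirely possible that every \emph{adjacent} pair already consists of the two extreme rays of the rank-2 subsystem it generates while the global root set is far from $\Delta'$. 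Exhibiting the height-preserving moves that "propagate the obstruction into adjacent position" is not routine, and nothing in your sketch indicates how to do it.

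Your fallback route has the same hole. Bessis's result gives that $w$ is a Coxeter element of $\Gamma(w)$ in the dual sense ($\lt(w)=\operatorname{rank}\Gamma(w)$, equivalently $w$ is a product of the simples of \emph{some} simple system of $\Phi'$); upgrading "some simple system" to the canonical one $\Delta'\subset\Pi$ is precisely the nontrivial point, and is exactly why the paper cites \cite{BJV18} rather than \cite{Bes03} here. (Beware also that $\Delta'$ being the unique simple system of $\Phi'$ contained in $\Pi$ does not by itself help: the roots $\rho(u_i)$ of a factorization realizing $w$ as a Coxeter element form a simple system for some choice of positive system of $\Phi'$, which need not be the one induced by $\Pi$.) Two smaller remarks: your induction on $k$ is vacuous, since the "inductive step" never invokes the hypothesis for smaller $k$; and Lemma~\ref{lem:rank2} describes the cyclic structure of a rank-2 parabolic but says nothing about heights measured in $\Phi'^+$, so it does not "precisely describe" the effect of a Hurwitz move on your monovariant --- you would additionally need that every non-extreme positive root of the rank-2 subsystem is a combination $a\beta_1+b\beta_m$ with $a,b\geq 1$, which requires a separate (if standard) verification, including in the non-crystallographic cases.
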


For a proof, see \cite[Proposition~3.1]{BJV18} and references therein.  In the crystallographic case, this can be obtained by representation theory, see~\cite{IngallsThomas}.

\subsection{The dual braid monoid}

Bessis~\cite{Bes03} defined the {\em dual braid monoid} $\Dbm(W)$ associated to $(W,S)$ and a Coxeter element $c$.  In fact, it is natural to define this in the context of the {\it dual Coxeter system} $(W,T)$, which means that we take all reflections as generators, rather than just simple reflections.  It is related with the {\it braid group} $\bB(W)$, by seeing $\Dbm(W)$ as the submonoid $\bB(W)$ of positive elements (products of generators, and no inverse of them).  Note that there is also a topological definition of this monoid given in \cite[Section~8]{Bes15}.

Let $\bT$ be a set in bijection with $T$, with the convention that for $t\in T$, $\bt$ is the corresponding element in $\bT$. Moreover, if $t,u\in T$, the element in $\bT$ corresponding to $t^u=utu\in T$ is denoted $\bt^\bu$.

\begin{defi} \label{defi:dbm}
The {\it dual braid monoid} $\Dbm(W)$ is defined by the presentation:
\[
  \Dbm(W) = \langle \; \bT \; | \; \bt\bu=\bu\bt^\bu \text{ if } tu\leq_T c \; \rangle
\]
As relations are homogeneous of degree 2, $\Dbm(W)$ has a natural grading, and we denote $|\bm|$ the length of $\bm$ as a product of generators.
\end{defi}

Because Coxeter elements are all conjugated, the isomorphism type of $\Dbm(W)$ as a homogeneous monoid does not depend on $c$.

There is another presentation of $\Dbm(W)$, using a bigger set of generators.  Just as $\bT$ is related to $T$, let us introduce a set $\bNC(W)$ in bijection with $NC(W)$ so that if $w\in NC(W)$, the corresponding element in $\bNC(W)$ is denoted $\bw$.

\begin{lemm}[{Bessis~\cite[Proposition~1.6.1]{Bes03}}]  \label{lemm:transitivity}
  Two minimal factorizations of $w = u_1 \cdots u_k = v_1 \cdots v_k $ of $w\in NC_k(W)$ can be connected by a finite sequence of {\emph Hurwicz moves}, which consists in replacing a factor $t_1 t_2$ with $t_2^{t_1} t_1$ or $t_2 t_1^{t_2}$ ($t_1,t_2 \in T$).
\end{lemm}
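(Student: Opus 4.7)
The plan is to proceed by induction on $k = \lt(w)$. The base cases $k=0,1$ are trivial (the factorization is empty or a single reflection). For the inductive step, given two minimal factorizations $(u_1,\ldots,u_k)$ and $(v_1,\ldots,v_k)$ of $w$, the strategy is to first use Hurwitz moves to transform $(v_1,\ldots,v_k)$ into a factorization $(v'_1,\ldots,v'_{k-1},u_k)$ ending in $u_k$; then $(u_1,\ldots,u_{k-1})$ and $(v'_1,\ldots,v'_{k-1})$ are both minimal factorizations of $wu_k^{-1}\in NC_{k-1}(W)$, and the inductive hypothesis applies.

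The core of the argument is therefore a ``sliding lemma'': given any minimal factorization $(v_1,\ldots,v_k)$ of $w$ and any reflection $t\in T$ with $t\leq_T w$, there exists a Hurwitz-equivalent factorization of the form $(v'_1,\ldots,v'_{k-1},t)$. The building block for this is the rank-two case $k=2$. For a rank-two element $x = v_iv_{i+1}$, Proposition~\ref{parabolic_standard_coxeter_element} ensures that $x$ is a standard Coxeter element of its parabolic closure $\Gamma(x)$, so by Lemma~\ref{lem:rank2} the reflections of $\Gamma(x)$ can be indexed as $u_1,\ldots,u_m$ with $u_{i+1}u_i=u_iu_{i-1}$. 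A direct verification shows that the minimal factorizations of $x$ as a product of two reflections are exactly the pairs $(u_{i+1},u_i)$ for $i\in\mathbb{Z}/m\mathbb{Z}$, and the relations $u_{i+1}u_i=u_iu_{i-1}$ say precisely that consecutive pairs differ by a single Hurwitz move. Thus all minimal factorizations of a rank-two element form a single Hurwitz orbit.

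Using this, Hurwitz moves applied at adjacent positions $(v_i,v_{i+1})$ realize \emph{any} minimal factorization of the rank-two element $v_iv_{i+1}$, so one can ``transport'' reflections through adjacent positions freely. To prove the sliding lemma, I would then induct on $k$ again: since $t \leq_T w = v_1 \cdots v_k$, either $t \leq_T v_1 \cdots v_{k-1}$ (apply induction to the factorization $(v_1,\ldots,v_{k-1})$ of $wv_k^{-1}$ to bring $t$ to position $k-1$, then use a rank-two swap at positions $(k-1,k)$ to move a conjugate of $t$ to position $k$), or else $t = v_k^g$ for some element $g$ below $wv_k^{-1}$, in which case a symmetric argument starting from the right works. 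Carter's linear independence (Lemma~\ref{lin_indep}) and the subword property of absolute order ensure these cases cover everything.

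The main obstacle is verifying rigorously that the sliding lemma terminates and covers every reflection divisor of $w$; this requires carefully exploiting the lattice structure of $NC(W)$, because one must show that the two sub-cases above exhaust all possibilities for $t\leq_T w$. Once sliding is established, the main theorem follows mechanically by the induction outlined in the first paragraph.
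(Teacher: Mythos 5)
First, note that the paper does not actually prove this lemma: it is quoted from Bessis~\cite[Proposition~1.6.1]{Bes03}, so there is no internal proof to compare against. Your overall skeleton is nonetheless the standard (and correct) one: induct on $k$, reduce everything to a ``sliding lemma'' asserting that any reduced factorization of $w$ is Hurwitz-equivalent to one ending in a prescribed $t\leq_T w$, and ground the whole argument in the rank-two case, which you handle correctly via Lemma~\ref{lem:rank2} and Proposition~\ref{parabolic_standard_coxeter_element}.

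The gap is in your proof of the sliding lemma, which is the actual content of the theorem. The dichotomy ``either $t\leq_T v_1\cdots v_{k-1}$, or $t=v_k^{g}$ for some $g\leq_T v_1\cdots v_{k-1}$'' is not exhaustive, and no appeal to the lattice structure of $NC(W)$ will make it so. A counting argument already refutes it in type $B_3$ with $c=s_1s_2s_3$ (where $m(s_1,s_2)=3$, $m(s_2,s_3)=4$) and $(v_1,v_2,v_3)=(s_1,s_2,s_3)$: all $9$ reflections of $W$ satisfy $t\leq_T c$, but $w'=s_1s_2$ generates a parabolic of type $A_2$, so $T(w')$ has $3$ elements and there are only $5$ elements $g\leq_T w'$ in total; the two branches therefore cover at most $3+5=8$ of the $9$ reflections. (The same phenomenon appears in $B_2$, where $s_2s_1s_2$ is neither $\leq_T s_1$ nor of the form $s_2^{g}$ with $g\leq_T s_1$; there you escape because $k=2$ is your base case, but in rank $\geq 3$ you cannot.) The second branch is also procedurally incomplete: it is not explained how the identity $t=v_k^{g}$ produces a Hurwitz-equivalent tuple ending in $t$. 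This sliding step is precisely the hard core of Bessis's Proposition~1.6.1 (and of the later uniform proof of Baumeister, Dyer, Stump and Wegener); it requires a genuinely different induction — e.g.\ on the reflection subgroup generated by $v_1,\dots,v_k$ and its canonical generators — rather than the divisor dichotomy you propose. As written, the argument establishes the rank-two case and the reduction scheme, but not the theorem.
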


\begin{prop}
 A presentation of $\Dbm(W)$ is given by taking $\bNC(W)$ as a set of generators, with relations $\bv_1 \cdots \bv_j = \bw_1 \cdots \bw_k$ if:
 \begin{itemize}
     \item $v_1 \cdots v_j= w_1 \cdots w_k$,
     \item this element $v_1 \cdots v_j$ is in $NC(W)$,
     \item $ \lt(v_1 \cdots v_j) = \sum_{i=1}^j \lt(v_i) = 
     \sum_{i=1}^k \lt(w_i)$
 \end{itemize}
\end{prop}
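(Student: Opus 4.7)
Let $M$ denote the monoid defined by the proposed presentation; the plan is to construct mutually inverse monoid morphisms between $M$ and $\Dbm(W)$. The easier direction will be $\psi\colon \Dbm(W) \to M$, sending each $\bt \in \bT$ to the corresponding element of $\bNC(W)$ (legitimate because $T = NC_1(W) \subseteq NC(W)$). To verify that $\psi$ respects the defining relations of $\Dbm(W)$, fix $t, u \in T$ with $tu \leq_T c$, so that $tu \in NC_2(W)$: both $(t, u)$ and $(u, t^u)$ are minimal factorizations of $tu$, and the defining relation of $M$ applied to these two decompositions of the single element $tu \in NC(W)$ immediately yields $\bt\bu = \bu\bt^\bu$ in $M$.

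In the opposite direction, I will define $\phi\colon M \to \Dbm(W)$ by choosing, for each $w \in NC(W)$, a minimal factorization $w = t_1 \cdots t_k$ in $W$ and setting $\phi(\bw) = \bt_1 \cdots \bt_k$. I expect the main technical obstacle to be verifying that $\phi(\bw)$ is independent of the chosen factorization, using the Hurwicz transitivity of Lemma~\ref{lemm:transitivity}: it suffices to show that each Hurwicz move is absorbed by a dual braid relation. Indeed, such a move replaces a consecutive pair $(t_i, t_{i+1})$ in a reduced factorization of $w \leq_T c$ by $(t_{i+1}^{t_i}, t_i)$; the subword property gives $t_i t_{i+1} \leq_T c$, and setting $t = t_{i+1}^{t_i}$ and $u = t_i$ one computes $tu = t_i t_{i+1}$ and $t^u = t_{i+1}$, so the dual braid relation $\bt\bu = \bu\bt^\bu$ translates exactly to $\bt_{i+1}^{\bt_i}\bt_i = \bt_i\bt_{i+1}$ in $\Dbm(W)$; the symmetric variant $(t_i,t_{i+1}) \mapsto (t_{i+1}, t_i^{t_{i+1}})$ is handled analogously. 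Once $\phi$ is well-defined on generators, the defining relations of $M$ follow: given $\bv_1 \cdots \bv_j = \bw_1 \cdots \bw_k$ as in the statement, concatenating minimal factorizations of each $\bv_i$ (resp.\ each $\bw_i$) yields two reduced factorizations of the common element of $NC(W)$—the length hypothesis guarantees reducedness—and by the same Hurwicz argument their $\phi$-images coincide.

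To conclude, I will check that $\phi$ and $\psi$ are mutually inverse. The composite $\phi \circ \psi$ acts as the identity on the generating set $\bT$ of $\Dbm(W)$, hence on the whole monoid. Conversely, for a generator $\bw \in M$ with chosen factorization $w = t_1 \cdots t_k$, one has $\psi(\phi(\bw)) = \bt_1 \cdots \bt_k$ in $M$, and a final application of a defining relation of $M$—with the single-term decomposition $\bw$ and the $k$-term decomposition $\bt_1 \cdots \bt_k$ of the common element $w \in NC(W)$—identifies this with $\bw$. Aside from the Hurwicz-to-dual-braid translation, every step is a direct bookkeeping exercise.
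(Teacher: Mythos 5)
Your proof is correct and follows essentially the same route as the paper's: both arguments hinge on recognizing the original dual braid relations inside the new presentation (the $j=k=2$ case) and on Lemma~\ref{lemm:transitivity} (Hurwicz transitivity) to show that the new generators $\bw=\bt_1\cdots\bt_j$ are well defined and that the new relations follow from the old ones. The only difference is presentational — you package the Tietze-transformation argument as a pair of explicit mutually inverse morphisms, which makes the bookkeeping more visible but adds no new mathematical content.
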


\begin{proof}
By considering the case $j=k=2$ and $v_1,v_2,w_1,w_2 \in T$, we see that the generators and relations from Definition~\ref{defi:dbm} are included in those above.  It remains to see that we can add the new generators and relations without changing the structure.  

For $w\in NC_j(W)$, define $\bw = \bt_1 \cdots \bt_j$ where $t_1 \cdots t_j$ is a reduced factorization of $w$.  By Lemma~\ref{lemm:transitivity}, $\bw$ does not depend on the chosen reduced factorization.  We can add the new generator $\bw$ together with the relation $\bw = \bt_1 \cdots \bt_j$ without changing the structure.

Now let $v_1,\dots,v_j,w_1,\dots,w_k$ as above.  By considering a minimal factorization of each of these elements and again using Lemma~\ref{lemm:transitivity}, we obtain the relation $\bv_1 \cdots \bv_j = \bw_1 \cdots \bw_k$ as a consequence of the previously known relations.
\end{proof}

The elements of $\bNC(W)$ are called {\it simple braids}.  From the presentation of $\Dbm(W)$, we see that there is a well-defined monoid map $\Dbm(W) \to W$ defined by $\bw \mapsto w$ for $\bw \in \bNC(W)$.  (This map thus extends the natural bijection $\bNC(W) \to NC(W)$.)

\begin{exam}
\label{exam:typeA}
We will use the example of $W=\mathfrak{S}_n$ with the long cycle $c=(1,2,\cdots,n)$ as standard Coxeter element. In this case, $NC(W)$ is naturally identified as the set of noncrossing partitions of $\lb 1,n\rb$ ordered by inclusion \cite{Bia97}. Recall that a set partition is \emph{noncrossing} if no two blocks are crossing, where $B_1\neq B_2$ are crossing if there exist $i<j<k<l$  such that $i,k\in B_1$ and $j,l\in B_2$. 

The dual braid monoid $\Dbm(\mathfrak{S}_n)$ is the \emph{Birman-Ko-Lee monoid}~\cite{Bir98}, originally defined to answer the word and conjugacy problems in the Artin braid group. Its generators are $\bt_{i,j}$ for $1\leq i<j\leq n$, in bijection with the tranpositions $(i,j)$ in $S_n$. It is then defined by the congruences:
\[
\begin{cases}\bt_{i,j}\bt_{k,l}=\bt_{k,l}\bt_{i,j}\text{ for }i<j<k<l\text{ or }i<k<l<j;\\ 
\bt_{i,j}\bt_{j,k}=\bt_{j,k}\bt_{i,k}=\bt_{i,k}\bt_{i,j}\text{ for any }i<j<k.
\end{cases}\]
\end{exam}

\subsection{Garside monoids and Cartier-Foata theory}

We refer to \cite{DDGKM15} for Garside theory.

\begin{defi} \label{defgarside}
 Let $M$ be a monoid, and denote $x \prec_{\ell} y$ (resp., $ x \prec_r y$) if $x$ a left (resp., right) divisor of $y$.  We say that $M$ is a {\it Garside monoid} if:
 \begin{itemize}
     \item $M$ is {\it atomic} (i.e., each $x\in M$ has a finite number of left divisors and right divisors),
     \item  $M$ is {\it cancellative}  (i.e., for all $x,y,z\in M$ we have $
  xy = xz$ implies $y=z$, and $xz=yz$ implies $x=y$),
     \item $(M,\prec_{\ell})$ and $(M,\prec_r)$ are lattices, 
     \item 
     there exists $\delta \in M$ (called a {\it Garside element}) such that 
     $\{x \in M\; : \; x \prec_{\ell} \delta \} = \{x \in M\; : \; x \prec_r \delta \}$, moreover this set is finite and generates $M$.
     \end{itemize}
\end{defi}

\begin{prop}[{Bessis~\cite[Theorem~2.3.2]{Bes03}}]
$\Dbm(W)$ is a Garside monoid, with $\bc=\bs_1\cdots \bs_n$ (the maximal simple braid associated to the Coxeter element $c = s_1\cdots s_n$) as a {\it Garside element}.  
\end{prop}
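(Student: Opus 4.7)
The plan is to verify the four conditions of Definition~\ref{defgarside} in turn, drawing on the Hurwicz transitivity of Lemma~\ref{lemm:transitivity}, the lattice property of $NC(W)$, and the presentation by simple braids described just above. Atomicity is immediate: the defining relations are homogeneous of degree $2$ in the generators $\bT$, so the length $|\cdot|$ descends to $\Dbm(W)$ and is additive under multiplication; any divisor $\bx$ of $\bm$ satisfies $|\bx|\le|\bm|$, and there are finitely many elements of any given length.

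For cancellativity and the lattice property, the central claim is that the map $w\mapsto\bw$ is an isomorphism from $(NC(W),\leq_T)$ onto the poset of left divisors of $\bc$ in $\Dbm(W)$ ordered by $\prec_\ell$. One direction is easy: if $v\leq_T w$, then a reduced factorization of $v$ extends to one of $w$, which together with Lemma~\ref{lemm:transitivity} yields $\bv\prec_\ell\bw$. The converse---that $\bv\prec_\ell\bw$ in the monoid forces $v\leq_T w$ in $W$---is the delicate point: one must rule out the possibility that distinct elements of $NC(W)$ produce equal simple braids, or that divisibility is coarser than the subword order. This is settled by combining Lemma~\ref{lemm:transitivity} (well-definedness of $\bw$) with Lemma~\ref{lin_indep} (linear independence of roots constraining which words can be equivalent). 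Once this isomorphism is in place, the lattice property of $NC(W)$ transports to $(\bNC(W),\prec_\ell)$, and one then constructs a greedy normal form $\bm=\bw_1\cdots\bw_r$ on all of $\Dbm(W)$ by iteratively extracting the largest simple left divisor; uniqueness of this form forces cancellativity, and the lattice property extends to arbitrary elements. The analogous statements for $\prec_r$ follow by symmetry of the defining relations under reversal.

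Finally, $c$ is the maximum of $NC(W)$, so the left (equivalently, right) divisors of $\bc$ are exactly the finite set $\bNC(W)$, which generates $\Dbm(W)$ because every $\bt\in\bT$ corresponds to a reflection $t\leq_T c$ and is thus a simple braid. This makes $\bc$ a Garside element. The main obstacle in the whole argument is the faithful identification of $NC(W)$ with the left-divisor poset of $\bc$---essentially the only place nontrivial Coxeter combinatorics enters, via Hurwicz moves and the linear-independence lemma; once this is in hand, everything else is bookkeeping.
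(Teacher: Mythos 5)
The paper does not prove this proposition: it is imported verbatim from Bessis~\cite[Theorem~2.3.2]{Bes03}, so there is no internal argument to compare yours against. Judged on its own merits, your sketch gets the architecture right — reduce everything to the ``germ'' of simple braids, identify $(\bNC(W),\prec_\ell)$ with $(NC(W),\leq_T)$, and import the Brady--Watt lattice theorem — and the easy parts (atomicity; the poset isomorphism between $NC(W)$ and the left divisors of $\bc$, which in fact needs only the monoid morphism $\Dbm(W)\to W$ and length-additivity rather than Carter's lemma) are fine.

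The genuine gap is in the sentence ``one then constructs a greedy normal form \dots; uniqueness of this form forces cancellativity, and the lattice property extends to arbitrary elements.'' This is precisely the nontrivial content of Bessis's theorem, and as stated it is circular. To extract ``the largest simple left divisor'' of an arbitrary $\bm$ you must first know that the set of simple left divisors of $\bm$ is closed under join — a property of the monoid, not of $NC(W)$ alone — and to prove the normal form is \emph{unique} you need left-cancellativity (or a confluence argument on words), which is the very thing you propose to deduce from uniqueness. The standard ways to close this loop are either Dehornoy's criterion for complemented presentations (verifying the cube condition for the lcm-complement on $\bT$, which is where the rank-$2$ combinatorics of Lemma~\ref{lem:rank2} and the lattice structure of $NC(W)$ actually enter), or the general germ-to-Garside theorem of \cite{DDGKM15}, whose hypotheses (associativity and two-sided cancellativity of the \emph{partial} product on $\bNC(W)$, plus the lattice property) must each be checked. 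Your proof needs to invoke one of these mechanisms explicitly; ``everything else is bookkeeping'' understates the step that carries the whole weight of the theorem.
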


There is a poset isomorphism between $\bNC(W)$ (endowed with left or right divisibility) and $NC(W)$, thus the property of $NC(W)$ being a lattice mentioned above is crucially related with the Garside structure of $\Dbm(W)$.

Let $\mu(\bm):=\mu({\bf 1},\bm)$ be the M\"obius function of $\Dbm(W)$ as a poset under left divisibility, between the identity ${\bf 1}$ and any element $\bm\in \Dbm(W)$.  The work of Cartier-Foata~\cite{Car69} naturally applies to Garside monoids, and gives us the following identity:
\begin{align}
\label{eq:length_genseries_0}
\sum_{\bm\in\Dbm(W)}q^{|\bm|}&=\Big(\sum_{\bm\in\Dbm(W)}\mu(\bm)q^{|\bm|}\Big)^{-1}.
\end{align}

Work of Albenque and the second author~\cite[Theorem 2]{Alb09} gives an explicit expression for the M\"obius values $\mu(\bm)$ -- valid for a class of monoids extending Garside monoids --, from which it follows that $\mu(\bm)=0$ unless $\bm$ divides $\bc$.

Using the isomorphism between $NC(W)$ and $\bNC(W)$, we can rewrite \eqref{eq:length_genseries_0} as:
\begin{equation}
\label{eq:length_genseries}
\sum_{m\in\Dbm(W)}q^{|m|}=\Big(\sum_{w\in NC(W)}\mu(w)q^{\lt(w)}\Big)^{-1},
\end{equation}
where $\mu$ is here the M\"obius function of $NC(W)$.  In the case of Example~\ref{exam:typeA} for $n=4$, a computation of the M\"obius function gives the length generating function $(1-6q+10q^2-5q^3)^{-1}$ for the monoid $\Dbm(\mathfrak{S}_4)$.

For example, the right-hand side of~\eqref{eq:length_genseries} is the inverse of the left-hand side of~\eqref{eq:muNC}. 

\section{Quadratic algebras and Koszul duality}
\label{sec:quadratic}

Let $\bk$ be any commutative field, which will turn out to play no role in what follows.   Recall that for any graded $\bk$-algebra $A=\oplus_{i=0}^\infty A_n$ with finite-dimensional homogeneous components, its Hilbert series $\operatorname{Hilb}(A,q)$ is the formal power series defined by 
\[
  \operatorname{Hilb}(A,q)=\sum_{i=0}^\infty (\dim A_i) q^i. 
\]

We briefly recall the notion of Koszul algebra. For more details, we refer to the surveys~\cite{Fro99,UKra} or the book \cite{PP05} and references therein.

A graded algebra $Q$ is {\it quadratic} if it has a presentation $Q=\mathcal{T}(V)/\langle R\rangle$, where $V$ is finite dimensional vector space over $\bk$, $\mathcal{T}(V)=\oplus_{i\geq 0} V^{\otimes i}$ is its tensor algebra, and $R\subset V\otimes V$ is a $\bk$-subspace generating the ideal of relations $\langle R\rangle$.  Note that $V$ can be identified with $Q_1$, the degree 1 homogeneous component of $Q$.
 
Any quadratic algebra $Q$ possesses a {\it quadratic dual} $Q^!$, which is another quadratic $\bk$-algebra defined as follows. Write $Q= \mathcal{T}(V)/\langle R\rangle$ as above. Then, by definition $Q^{!}:=\mathcal{T}(V^*)/\langle R^\perp \rangle $ where $R^\perp \subset (V\otimes V)^*=V^*\otimes V^*$ is the space of linear forms on $V\otimes V$ which vanish on $R$.  Note that $V^*$ can be identified with $Q^!_1$, the degree 1 homogeneous component of $Q^!$.
 

We refer to \cite{BH93,PP05} for the notion of {\it graded free resolution} of a graded module.  Such resolutions always exist, and there is a {\it minimal} one which is unique up to isomorphism.  Koszul algebras can be characterized by a property of the minimal graded free resolution of $\bk$. Note that $\bk$ is naturally a $Q$-module, via the {\it augmentation map} $\epsilon:Q\to \bk$ defined by projection on the degree $0$ component $Q_0=\bk$.  A graded free resolution for this module has the form
\begin{equation}
\label{def_resolution}
   \cdots
   \stackrel{\partial_{3}}{\lra} Q^{c_3}
   \stackrel{\partial_{2}}{\lra} Q^{c_2}
   \stackrel{\partial_{1}}{\lra} Q^{c_1} \stackrel{\partial_0}{\lra} Q \stackrel{\epsilon}{\lra} \bk \lra 0.
\end{equation}
Minimality is characterized by the property $\partial_i(Q^{c_{i+1}}) \subset Q^+ \cdot Q^{c_i}$, i.e., the map $\partial_i$ has no component of homogeneous degree $0$.

\begin{defi}
The algebra $Q$ is \textit{Koszul} if each map $\partial_i$ in \eqref{def_resolution} is homogeneous of degree $1$.
\end{defi}

Here, we use the natural grading of each free $Q$-module coming from the grading of $Q$.  So, $\partial_i$ being homogeneous of degree $1$ means that the matrix of $\partial_i$ (with respect to canonical bases of the free modules) has coefficients in $Q_1$. 

Koszul algebras have a number of other characterizations, see~\cite{Fro99,UKra,PP05}. It is also known that $Q$ is Koszul if and only if $Q^!$ is.  When this is the case, the Hilbert series of $Q^!$ can be obtained either from the minimal resolution of $\bk$ by free $Q$-modules, or from the Hilbert series of $Q$:

\begin{prop}
\label{prop:numerical_koszul}
Let $Q$ be a quadratic algebra and $Q^!$ be its quadratic dual.  Suppose that $Q$ is a Koszul algebras, and the minimal resolution of $\bk$ by free $Q$-modules is as in~\eqref{def_resolution}.
Then, the Hilbert series of $Q^!$ is given by:
\begin{align} 
  \label{eq:numerical_koszul1}
  \operatorname{Hilb}(Q^!,q) &= 1+\sum_{i\geq 1} c_i q^i
\end{align}
Moreover, we have the relation:
\begin{align}
  \label{eq:numerical_koszul2}
  \operatorname{Hilb}(Q^!,q)   &= \operatorname{Hilb}(Q,-q)^{-1}.
\end{align}
\end{prop}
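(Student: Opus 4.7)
The plan is to establish \eqref{eq:numerical_koszul1} by pinning down the internal degrees appearing in the minimal resolution and invoking the identification of $Q^!$ with the Ext-algebra of $\bk$, and then to derive \eqref{eq:numerical_koszul2} from \eqref{eq:numerical_koszul1} by an Euler characteristic computation on the resolution itself.

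The first step is to analyse the internal grading of each free module $Q^{c_i}$ in \eqref{def_resolution}. Each $Q^{c_i}$ carries a grading under which its free basis is homogeneous; writing $c_{i,j}$ for the number of basis elements in internal degree $j$, I claim that $c_{i,j}=0$ unless $j=i$. This follows by induction on $i$: the free module $Q$ in position $0$ has its generator in degree $0$, and assuming inductively that the generators of $Q^{c_i}$ all sit in degree $i$, the Koszul hypothesis that $\partial_i$ is homogeneous of degree $1$ combined with the minimality condition $\partial_i(Q^{c_{i+1}})\subset Q^+\cdot Q^{c_i}$ forces each generator of $Q^{c_{i+1}}$ to lie in degree exactly $i+1$. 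Consequently $\operatorname{Hilb}(Q^{c_i},q)=c_i q^i \operatorname{Hilb}(Q,q)$.

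For \eqref{eq:numerical_koszul1}, combine two standard facts. First, the minimal Betti numbers of $\bk$ encode its Ext-algebra, so $c_{i,j}=\dim \operatorname{Ext}^i_Q(\bk,\bk)_j$, and by the previous paragraph $\operatorname{Ext}^i_Q(\bk,\bk)$ is concentrated in internal degree $i$ with total dimension $c_i$. Second, the main theorem of Koszul duality yields a bigraded isomorphism $\operatorname{Ext}^*_Q(\bk,\bk)\cong Q^!$ identifying $\operatorname{Ext}^i$ with $Q^!_i$. Together these give $c_i=\dim Q^!_i$. For \eqref{eq:numerical_koszul2}, observe that in each fixed internal degree only finitely many terms of \eqref{def_resolution} contribute (since $Q^{c_i}$ lives in degrees $\geq i$), so taking the alternating sum of Hilbert series of the exact sequence \eqref{def_resolution} gives
\[
1 \;=\; \sum_{i\geq 0}(-1)^i \operatorname{Hilb}(Q^{c_i},q) \;=\; \operatorname{Hilb}(Q,q)\sum_{i\geq 0} c_i(-q)^i \;=\; \operatorname{Hilb}(Q,q)\cdot\operatorname{Hilb}(Q^!,-q),
\]
which rearranges to the desired formula.

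The main obstacle is the identification $Q^!\cong\operatorname{Ext}^*_Q(\bk,\bk)$: this is the substantive content of Koszul duality and cannot be taken for granted. The cleanest way to obtain it, and also to bypass the Ext-computation entirely, is to introduce the Koszul complex $K^\bullet=Q\otimes (Q^!)^*$ with its canonical differential and to show that $Q$ is Koszul precisely when $K^\bullet$ is a (necessarily minimal) free resolution of $\bk$; this immediately yields $c_i=\dim Q^!_i$. Since the proposition is stated as background material, the natural choice here is to invoke the identification from \cite{Fro99,PP05} rather than redevelop the theory.
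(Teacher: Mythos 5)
Your proof is correct. The paper states this proposition as background and gives no proof, simply citing the standard references, so there is nothing to compare against; your argument is the standard one. Two remarks: your degree bookkeeping (generators of $Q^{c_i}$ concentrated in internal degree $i$, hence $\operatorname{Hilb}(Q^{c_i},q)=c_iq^i\operatorname{Hilb}(Q,q)$) and the degreewise-finite Euler characteristic computation are exactly right, and the one substantive input you must import --- the identification $c_i=\dim Q^!_i$ --- is precisely the statement that the Koszul complex $Q\otimes(Q^!)^*$ is the minimal free resolution, which is the form in which the paper itself invokes it later (see \eqref{other_free_resolution} in Section~\ref{sec:relating}, cited to \cite{Fro99,PP05}). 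So your suggested Ext-free route is the one most consistent with the rest of the paper.
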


There is a more precise way to relate the minimal graded free resolution of $\bk$ with $Q^!$.  This will be explained in Section~\ref{sec:propertiesdual}.

Recall that $\Alg(W)$ was defined as the monoid algebra of $\bk[\Dbm(W)]$.  Therefore, following Definition~\ref{defi:dbm}, one has the quadratic presentation $\Alg(W) = \mathcal{T}(\bk^\bT) / \langle R\rangle $ where 
\begin{equation}
\label{eq:pres_alg}
R := \operatorname{Span}_{\bk} 
\big\{ 
\bt\otimes\bu - \bu\otimes \bt^\bu
\big\}
\subset \bk^\bT \otimes \bk^\bT.
\end{equation}

The Hilbert series of $\Alg(W)$ is thus the length generating series of $\Dbm(W)$, i.e., the left-hand side of~\eqref{eq:length_genseries}.  We get:
\begin{equation}
 \label{eq:hilbertprimal}
 \operatorname{Hilb}(\Alg(W),q)=\Big(\sum_{w\in NC(W)}\mu(w)q^{\lt(w)}\Big)^{-1}.
\end{equation}

\begin{defi}
We define $\Dual(W)$ as the quadratic dual of $\Alg(W)$; that is, $\Dual(W):=\Alg(W)^!$.
\end{defi}

\section{Koszulity of the dual braid monoid algebra}
\label{sec:koszulity}

In this section, we prove that $\Alg(W)$ is a Koszul algebra by building the minimal free resolution of the ground field $\bk$. This was previouly done in type $A$ and $B$ in \cite{Alb09}, using an {\it ad hoc} resolution in each case, which was built as a subcomplex of a bigger, non minimal  resolution. Here, we construct directly a minimal resolution for any $W$ and standard Coxeter element $c$, based on the positive cluster complex attached to this data.

As explained in the introduction, the idea is to build this resolution as a direct sum of complexes that are known to be exact.  A somewhat similar construction has been given by Kobayashi~\cite{Koba90} in the case of trace monoids.\smallskip

Our construction relies on the simplicial complex $\Delta^+(W)$,  the positive part of the cluster complex, and the related notion of $c$-compatible reflection ordering.  Their definitions and properties are given in Appendix~\ref{sec:clustercomplex}. We thus fix such a $c$-compatible reflection ordering $\prec$. We write $\{t_1\succ \dots \succ t_k \}$ to express that $\{t_1, \dots ,  t_k \}$ is indexed so that $t_1\succ \dots \succ t_k$.  Though Definition~\ref{def:clust1} show that reflection orderings are not strictly necessary, it will be convenient to have a canonical order on the vertices on $\Delta^+(W)$ in the following contexts:
\begin{itemize}
   \item when defining the reduced homology of the complex, each face $f\in\Delta^+(W)$ gets a canonical order, and geometrically this defines an orientation of the associated simplex,
   \item when we have an algebra having $T$ as a generating set, the total order can be used to obtain some bases as in the Poincaré-Birkhoff-Witt theorem, see~\cite{LV12}.
\end{itemize}

{\em Convention}: We consider that $W$ and $c$ are fixed. We write $\Alg,\Dual,NC_j,\dots$ instead of $\Alg(W),\Dual(W),NC_j(W),\dots$ to simplify the notation.\smallskip

\subsection{Construction of the minimal resolution}
\label{sub:minimal_resolution}

We use standard notions related with simplicial homology, see~\cite{Hat03,Koz08}.

\begin{defi}
For any set $X$, let $\bk^X$ denote the $\bk$-vector space freely spanned by $X$. Let $w\in NC_j$ with $1\leq j\leq n$.  The {\it augmented simplicial chain complex} of $\Delta^+(w)$ is
\begin{equation} 
\label{eq:complex}
  0 \lra \bk^{\Delta^+_{j-1}(w)} \stackrel{\beta_{j-1}}{\lra} \dots \stackrel{\beta_1}{\lra} \bk^{\Delta^+_0(w)} 
  \stackrel{\beta_0}{\lra} \bk^{\Delta^+_{-1}(w)} \lra 0
\end{equation}
where the {\it boundary maps} $\beta_i$ are defined on the basis as follows: if $f=\{t_0 \succ \dots \succ t_i\}\in\Delta^+_{i}(w)$, we have:
\[
  \beta_i(f) := \sum_{\ell=0}^i  (-1)^\ell  \cdot (f\backslash\{t_\ell\}) \in \bk^{\Delta^+_{i-1}(w)}.
\]
\end{defi}

\begin{prop}
For any $w\in NC_j$ with $1 \leq j\leq n$, the complex in \eqref{eq:complex} is exact.
\end{prop}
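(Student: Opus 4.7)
The plan is to recognize the complex in \eqref{eq:complex} as the augmented simplicial chain complex computing the reduced homology of $\Delta^+(w)$, and then to invoke the (known) topological fact that this complex is contractible---indeed, homeomorphic to a $(j-1)$-dimensional ball.

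First I would reduce to the parabolic setting. By Proposition~\ref{parabolic_standard_coxeter_element}, $w$ is a standard Coxeter element of the rank-$j$ parabolic reflection group $\Gamma(w)$, whose reflection set is $\TT(w)$. I would then check that the restriction of $\prec$ to $\TT(w)$ is $c$-compatible in the sense of Appendix~\ref{sec:clustercomplex} (for $w$ viewed as a Coxeter element of $\Gamma(w)$), so that $\Delta^+(w)$ coincides, as an ordered simplicial complex, with the positive part of the cluster complex of $\Gamma(w)$. Under this identification, the maps $\beta_i$ are precisely the standard differentials of the augmented simplicial chain complex of $\Delta^+(w)$ with coefficients in $\bk$, with signs dictated by the canonical order on vertices induced by $\prec$.

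Next I would invoke the topological fact, recalled in Appendix~\ref{sec:clustercomplex} and essentially due to Fomin--Zelevinsky and Chapoton--Fomin--Zelevinsky~\cite{CFZ05}, that for any finite Coxeter group of rank $j$ and any standard Coxeter element, the positive part of the cluster complex is a shellable pseudomanifold homeomorphic to a closed $(j-1)$-ball (it appears geometrically as a facet of the generalized associahedron, cut out by the positive chamber). In particular $\Delta^+(\Gamma(w))$ is contractible, so $\tilde H_i(\Delta^+(w);\bk) = 0$ for every $i$, which is exactly the desired exactness.

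The step I expect to be the main obstacle is the bookkeeping in the first paragraph: one needs to verify that the definition of $\Delta^+(w)$ given in the body of the paper (via $T$, $c$ and $\prec$) genuinely matches, as an ordered simplicial complex, the positive cluster complex built intrinsically from the pair $(\Gamma(w), w)$ with the inherited reflection ordering. This is routine but has to be cross-referenced carefully with the Appendix (in particular to ensure compatibility of the $c$-compatible orderings and of the face/compatibility relations upon passing to the parabolic). Once this identification is pinned down, exactness of~\eqref{eq:complex} is a direct consequence of the standard contractibility of the positive cluster complex, and no further combinatorial work is required.
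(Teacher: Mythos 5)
Your proposal is correct and follows essentially the same route as the paper: the paper's one-line proof invokes Proposition~\ref{prop:deltaplusw} (that $\Delta^+(w)$ is a $(j-1)$-ball, hence has vanishing reduced homology), and the proof of that proposition is exactly your reduction to the parabolic $(\Gamma(w),w)$ via Proposition~\ref{parabolic_standard_coxeter_element} followed by the contractibility of the positive cluster complex (Proposition~\ref{prop:ball}). The bookkeeping you flag as the main obstacle is precisely what the paper defers to Appendix~\ref{sec:clustercomplex}.
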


\begin{proof}
From Proposition~\ref{prop:deltaplusw}, it follows that the reduced simplicial homology (with coefficients in $\bk$) of $\Delta^+(w)$ is zero-dimensional in every degrees.  This means that the augmented simplicial chain complex is exact.
\end{proof}

\begin{defi}
 For $0\leq j \leq n-1$, the $\Alg$-module maps
 \[
   \partial_j : \Alg \otimes \bk^{\Delta^+_{j}} \lra \Alg \otimes \bk^{\Delta^+_{j-1}}
 \]
 are defined by, if $\bb\in\Dbm$ and $f = \{ t_0 \succ \dots \succ t_j \}\in\Delta^+_j$:
 \begin{equation} \label{def_partial}
  \partial_j( \bb \otimes f ) := \sum_{i=0}^j  (-1)^i  \cdot ( \bb \cdot \bt_i^{   \bt_{i-1}, \dots, \bt_0}) \otimes (f\backslash\{t_i\})
 \end{equation}
 where (recalling that $\bt^\bu\in\bT$ corresponds to $utu\in T$) the element $\bt_i^{   \bt_{i-1}, \dots, \bt_0} \in \bT$ corresponds to $t_0\cdots t_{i-1} t_i t_{i-1} \cdots t_0 \in T$.
\end{defi}

It is clear from the definition that $\partial_j$ is left $\Alg$-linear, for the natural structure of left $\Alg$-module on tensor products $\Alg \otimes \bk^ X$.

As $\Delta^+_{-1}=\{\varnothing\}$, we can identify $\Alg \otimes \bk^{\Delta^+_{-1}}$ with $\Alg \otimes \bk = \Alg$ and consider the augmentation $\epsilon$ as a map $\Alg \otimes \bk^{\Delta^+_{-1}} \to \bk$.

\begin{theo}
\label{theo:koszulity}
The diagram
\begin{equation}  \label{def_complex}
  0 \lra \Alg \otimes \bk^{\Delta^+_{n-1}} \stackrel{\partial_{n-1}}{\lra} \dots \stackrel{\partial_0}{\lra} \Alg \otimes \bk^{\Delta^+_{-1}} \stackrel{\epsilon}{\lra} \bk \lra 0
\end{equation}
is the minimal free resolution of $\bk$ by $\Alg$-modules.
\end{theo}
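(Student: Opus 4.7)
The strategy, as outlined in the introduction, is to split the complex \eqref{def_complex} as a direct sum of chain complexes of $\bk$-vector spaces indexed by $\bm\in\Dbm$, and to identify each summand with the augmented simplicial chain complex of some $\Delta^+(w)$, whose exactness was just established.

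For each $\bm\in\Dbm$ and $j\geq -1$, I would set $V_{j,\bm}\subseteq \Alg\otimes\bk^{\Delta^+_j}$ to be the $\bk$-span of the pairs $\bb\otimes f$ with $\bb\cdot\bw_f=\bm$, writing $\bw_f:=\bt_0\bt_1\cdots\bt_j\in\bNC$ for $f=\{t_0\succ\cdots\succ t_j\}$. The crucial preliminary fact is the identity
\[
\bt_i^{\bt_{i-1},\ldots,\bt_0}\cdot\bt_0\bt_1\cdots\bt_{i-1}\;=\;\bt_0\bt_1\cdots\bt_i\quad\text{in }\Dbm,
\]
which is the equality in $\Dbm$ of two minimal factorizations of $t_0\cdots t_i\in NC$, and thus follows from Hurwicz transitivity (Lemma~\ref{lemm:transitivity}). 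Applying this identity term by term in \eqref{def_partial} shows that $\partial_j(V_{j,\bm})\subseteq V_{j-1,\bm}$, so the complex splits as $\bigoplus_{\bm}V_{\bullet,\bm}$.

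Next, I would identify $V_{\bullet,\bm}$ with a familiar chain complex. By the Garside lattice structure of $\Dbm$ (Definition~\ref{defgarside}), $\bm$ admits a unique maximal simple right divisor in $\bNC$; let $\hat{w}(\bm)\in NC$ denote the corresponding noncrossing partition. Using that on $\bNC$ right-divisibility also corresponds to $\leq_T$ on $NC$ (equivalently, for $u\leq_T v$ in $NC$ one has $vu^{-1}\in NC$ with additive reflection length), the simple braids right-dividing $\bm$ are exactly $\{\bw : w\leq_T\hat{w}(\bm)\}$, and therefore $\{f\in\Delta^+ : \bw_f \text{ right-divides } \bm\}=\Delta^+(\hat{w}(\bm))$. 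Cancellativity in $\Dbm$ then yields a bijection between the basis of $V_{j,\bm}$ and $\Delta^+_j(\hat{w}(\bm))$ via $\bb\otimes f\mapsto f$ (with $\bb$ recovered as $\bm\bw_f^{-1}$), under which $\partial_j$ transports to the usual simplicial differential.

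Exactness and minimality are then immediate. If $\bm=\mathbf{1}$ then $\hat{w}(\bm)=e$, and $V_{\bullet,\mathbf{1}}$ reduces to a single copy of $\bk$ in degree $-1$, on which the augmentation $\epsilon$ restricts to an isomorphism. If $\bm\neq\mathbf{1}$, any generator right-dividing $\bm$ provides a nontrivial reflection $\leq_T\hat{w}(\bm)$, so $\hat{w}(\bm)$ has rank at least $1$, and the augmented chain complex of $\Delta^+(\hat{w}(\bm))$ is exact by the Proposition proved above. For minimality, it suffices to observe that each matrix entry of $\partial_j$ in the face basis lies in $\Alg_1=\bk^\bT\subset\Alg^+$, so no degree-$0$ component can arise. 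The main obstacle I anticipate is the identification of $V_{\bullet,\bm}$ with the chain complex of $\Delta^+(\hat{w}(\bm))$, which hinges on the compatibility of right-divisibility in $\bNC$ with the absolute order on $NC$.
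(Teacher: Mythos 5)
Your proposal is correct and follows essentially the same route as the paper: the subspaces $V_{j,\bm}$ are exactly the paper's $\Theta_j(\bm)$, the element $\hat{w}(\bm)$ is the paper's greatest common right divisor of $\bm$ and $\bc$, and the identification of each summand with the augmented chain complex of $\Delta^+(\hat{w}(\bm))$ via cancellativity, together with the separate treatment of $\bm=\mathbf{1}$ and the degree-$1$ observation for minimality, all match the paper's argument. The only cosmetic difference is that you establish the key identity $\bt_i^{\bt_{i-1},\dots,\bt_0}\cdot\bnc(f\backslash\{t_i\})=\bnc(f)$ via Hurwicz transitivity on the prefix $\bt_0\cdots\bt_i$, whereas the paper deduces it directly from the equality of reduced factorizations of $\nc(f)$; both are valid.
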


The rest of this section is devoted to the proof of Theorem~\ref{theo:koszulity}. Before embarking on it, let us note that the maps $\partial_i$ are homogeneous of degree 1 by their definition~\eqref{def_partial}, and thus \eqref{def_complex} is a linear resolution. It follows then 

\begin{coro}[Theorem~\ref{theo:koszul_intro}]
\label{coro:koszulity}
  $\Alg$ and its quadratic dual $\Dual$ are Koszul algebras.
\end{coro}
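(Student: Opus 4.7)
The plan is to argue in three steps: verify the diagram is a complex, establish exactness by decomposing into simpler exact summands, and observe that minimality is automatic. For $\partial_{i-1}\circ\partial_i = 0$, I would fix indices $\ell < m$ in a face $f = \{t_0 \succ \cdots \succ t_i\}$ and isolate the coefficient of $f \setminus \{t_\ell, t_m\}$ in $\partial_{i-1}\partial_i(\bb \otimes f)$. The standard simplicial sign cancellation $(-1)^{\ell+m-1} + (-1)^{\ell+m} = 0$ reduces $\partial^2=0$ to proving
\[
\bu_m \bu_\ell = \bu_\ell \bw'_m \quad\text{in } \Dbm,
\]
where $\bu_k$ is the generator corresponding to $t_0 \cdots t_{k-1} t_k t_{k-1} \cdots t_0 \in T$ and $\bw'_m$ is defined analogously with $t_\ell$ omitted from the conjugating sequence. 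A direct computation with roots confirms both sides equal $u_m u_\ell = u_\ell w'_m$ in $W$. To promote the equality to $\Dbm$, I would apply Hurwicz moves to the reduced factorization $(t_0, \ldots, t_i)$ of $w_f := t_0 t_1 \cdots t_i \in NC_{i+1}$: moving $t_\ell$ to position $0$ produces $u_\ell$, then moving $t_m$ to position $1$ produces $w'_m$, so $(u_\ell, w'_m, \ldots)$ is a new reduced factorization of $w_f$. In particular $u_\ell w'_m \in NC_2$, so Lemma~\ref{lemm:transitivity} yields the desired equality in $\Dbm$.

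For exactness, my strategy follows the introduction's outline: realize the complex as a direct sum of augmented simplicial chain complexes of subcomplexes $\Delta^+(w) \subseteq \Delta^+(W)$. I assign to each basis element $(\bb, f)$ of $\Alg \otimes \bk^{\Delta^+_i}$ the monoid element $\bm := \bb \cdot \bw_f \in \Dbm$, where $\bw_f \in \bNC$ is the simple braid associated to $w_f \in NC_{i+1}$. The identity $\bu_\ell \cdot \bw_{f \setminus \{t_\ell\}} = \bw_f$ in $\Dbm$ (both sides being simple braids representing $w_f$) shows that $\partial_i$ preserves this assignment, yielding a direct-sum decomposition $\Alg \otimes \bk^{\Delta^+_\bullet} = \bigoplus_{\bm \in \Dbm} C^\bm_\bullet$. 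By Garside theory, the simple braids right-dividing $\bm$ form an interval $[e, r(\bm)]$ in $NC$ with a unique maximum $r(\bm)$, and $\bb$ is uniquely determined by $\bw_f$ via right-cancellation. Combined with the cluster-complex identification $\Delta^+(w) = \{f \in \Delta^+(W) : w_f \leq_T w\}$, this exhibits $C^\bm_\bullet$ as the augmented simplicial chain complex of $\Delta^+(r(\bm))$, with differential matching the $\succ$-ordered simplicial boundary. For $\bm = 1$ we have $r(\bm) = e$, so $C^1_\bullet$ reduces to a single $\bk$ in degree $-1$ that $\epsilon$ maps isomorphically onto $\bk$; for $\bm \neq 1$, $r(\bm) \in NC_j$ for some $j \geq 1$, and the preceding proposition guarantees $C^\bm_\bullet$ is exact. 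Summing over $\bm$ gives exactness of the full augmented complex.

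Minimality is immediate since each $\partial_i$ has matrix entries in $\Alg_1 = \bk^\bT \subset \Alg^+$. The hardest part will be the cluster-complex identification $\Delta^+(w) = \{f \in \Delta^+(W) : w_f \leq_T w\}$, which bridges the monoid decomposition with the known acyclicity of $\Delta^+(w)$ from the preceding proposition; this should follow from the appendix's material on parabolic cluster complexes. A secondary technical ingredient is the standard Garside-theoretic fact that the simple right-divisors of $\bm$ form an interval $[e, r(\bm)]$ in $NC$.
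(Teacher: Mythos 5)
Your proposal is correct and follows essentially the same route as the paper: the decomposition by the monoid element $\bm=\bb\cdot\bnc(f)$ is exactly the paper's decomposition into the subspaces $\Theta_\bullet(\bb)$, the identification of each summand with the augmented chain complex of $\Delta^+(w)$ for $w$ the maximal simple right-divisor of $\bm$ (the gcd of $\bm$ and $\bc$) is the paper's Proposition, and exactness then comes from contractibility of $\Delta^+(w)$ plus the degree-$1$ observation for minimality. The only difference is that you verify $\partial^2=0$ by hand via Hurwicz moves, whereas the paper obtains it for free from the isomorphism of each summand with a simplicial chain complex; your extra check is correct but redundant.
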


\begin{defi}
For $\bb\in\Dbm$ and $-1\leq i \leq n-1$, we define:
\begin{equation}
   \Theta_i(\bb) :=  \operatorname{Span}_{\bk} \big\{ \ba\otimes f \in \Alg \otimes \bk^{\Delta^+_i} \; \big| \;  \ba \cdot \bnc(f)= \bb \big\}.
\end{equation}
\end{defi}

\begin{prop}
 For any $\bb \in \Dbm$ and $0\leq j \leq n-1$, we have:
\begin{equation} \label{stability_Theta}
  \partial_j(\Theta_j(\bb))\subset \Theta_{j-1}(\bb).
\end{equation}
 Moreover, let $w\in NC$ such that $\bw \in \bNC$ is the greatest common right divisor of $\bb$ and $\bc$ in $\Dbm$.  If $\bb\neq 1$, then $w\neq 1$  and the complex 
 \begin{equation} \label{eq:complex2}
  0 \lra \Theta_{n-1}(\bb) \stackrel{\partial_{n-1}}{\lra} \dots \stackrel{\partial_0}{\lra} 
  \Theta_{-1}(\bb) \lra 0
 \end{equation}
 is ($\bk$-linearly) isomorphic to the augmented simplicial chain complex of $\Delta^+(w)$ in~\eqref{eq:complex}.  In particular this complex is exact.
\end{prop}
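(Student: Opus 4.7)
The plan is to identify the complex $\Theta_\bullet(\bb)$ with the augmented simplicial chain complex of $\Delta^+(w)$ via an explicit basis. First, since $\Dbm$ is left-cancellative (being Garside), for each fixed $\bb$ and each $f \in \Delta^+_i$ there is at most one $\ba \in \Dbm$ with $\ba \cdot \bnc(f) = \bb$, and exactly one precisely when $\bnc(f)$ is a right divisor of $\bb$. The $\bNC$-elements that right-divide $\bb$ form a sublattice of $\bNC$ whose maximum is the right gcd $\bw$ of $\bb$ and $\bc$ (using that $\bNC$ consists of the right divisors of $\bc$). Via the isomorphism between $(\bNC, \prec_r)$ and $(NC, \leq_T)$, the condition $\bnc(f) \prec_r \bw$ is equivalent to $f \in \Delta^+(w)$, as spelled out in Appendix~\ref{sec:clustercomplex}. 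This yields a canonical $\bk$-linear isomorphism $\Theta_i(\bb) \cong \bk^{\Delta^+_i(w)}$ sending $\ba \otimes f \mapsto f$.

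The heart of both the stability and the intertwining of differentials is the algebraic identity
\[
  \bt_i^{\bt_{i-1},\dots,\bt_0} \cdot \bt_0 \bt_1 \cdots \bt_{i-1} \;=\; \bt_0 \bt_1 \cdots \bt_i \quad \text{in } \Dbm.
\]
Both sides are words of length $i+1$ in the generators $\bT$, and a direct computation in $W$ shows they project to the same element $t_0 \cdots t_i$, the middle factors $t_{i-1} \cdots t_0 \cdot t_0 \cdots t_{i-1}$ cancelling to the identity. Because $\{t_0, \dots, t_i\}$ is a face of $\Delta^+$, the product $t_0 \cdots t_i$ lies in $NC_{i+1}$, so both sides are reduced factorizations into $i+1$ reflections. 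Hurwicz transitivity of reduced factorizations of elements of $NC$ (Lemma~\ref{lemm:transitivity}) then identifies them in $\Dbm$. Multiplying on the right by $\bt_{i+1} \cdots \bt_j$ gives $\bt_i^{\bt_{i-1},\dots,\bt_0} \cdot \bnc(f \setminus \{t_i\}) = \bnc(f)$, so every summand $(\ba \cdot \bt_i^{\bt_{i-1},\dots,\bt_0}) \otimes (f \setminus \{t_i\})$ of $\partial_j(\ba \otimes f)$ lies in $\Theta_{j-1}(\bb)$, proving \eqref{stability_Theta}.

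For the remaining claims, if $\bb \neq 1$, any generator $\bt \in \bT$ appearing as the rightmost letter in a factorization of $\bb$ is a simple braid right-dividing $\bb$, so $\bt \prec_r \bw$ and hence $\bw \neq 1$. Finally, combining the two preceding paragraphs, the isomorphism $\Theta_i(\bb) \cong \bk^{\Delta^+_i(w)}$ carries $\partial_j$ to the simplicial boundary $\beta_j$: the key identity ensures that the coefficient $\ba \cdot \bt_i^{\bt_{i-1},\dots,\bt_0}$ is exactly the unique element of $\Dbm$ making the summand land in $\Theta_{j-1}(\bb)$, so at the level of indexing faces one recovers $\beta_j(f) = \sum_i (-1)^i (f \setminus \{t_i\})$. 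Exactness of~\eqref{eq:complex2} follows from exactness of the augmented simplicial chain complex of $\Delta^+(w)$ established in the preceding proposition. The main obstacle is the algebraic identity displayed above: it is what allows the boundary maps in $\Alg \otimes \bk^{\Delta^+_\bullet}$ to have nonzero coefficients in $\Alg$ and still respect the $\Theta$-filtration, and its proof rests crucially on the combinatorial fact that compatible-ordered products of faces of $\Delta^+$ lie in $NC$ together with Hurwicz transitivity.
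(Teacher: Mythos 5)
Your proof is correct and follows essentially the same route as the paper's: the key relation $\bt_i^{\bt_{i-1},\dots,\bt_0}\cdot\bnc(f\setminus\{t_i\})=\bnc(f)$ in $\Dbm$ (which you justify via Hurwicz transitivity, the paper via the reducedness of the factorization and its $\bNC$-presentation of $\Dbm$), cancellativity plus the right-gcd characterization to identify $\Theta_i(\bb)$ with $\bk^{\Delta^+_i(w)}$, and the intertwining of $\partial_\bullet$ with the simplicial boundary maps. Your explicit argument that $\bb\neq 1$ forces $w\neq 1$ is a small addition the paper leaves implicit; the only quibble is that recovering $\ba$ from $\ba\cdot\bnc(f)=\bb$ uses cancellation of the \emph{right} factor, i.e., right-cancellativity rather than left.
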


\begin{proof}
 Let $\ba \otimes f \in \Theta_{j}(\bb)$. We denote $f
 = \{t_0 \succ \dots \succ t_j\} \in \Delta_j^+$.  We need to check that each term in the right hand side of~\eqref{def_partial} is in $\Theta_{j-1}(\bb)$.  Note that
 \[
    \nc( f\backslash \{t_i\} ) = t_0 \cdots t_{i-1} t_{i+1} \cdots t_j,
 \]
 so 
 \[
   (t_0\cdots t_{i-1} t_i t_{i-1} \cdots t_0) \cdot \, \nc(f\backslash \{t_i\} ) = t_0 \cdots t_j = \nc(f).
 \]
 Moreover this is a reduced factorization since $\lt(nc(f\backslash \{t_i\} )) = j-1 = \lt(nc(f))-1$.  We thus have the following relation in $\Dbm$:
 \[
   \bt_i^{   \bt_{i-1}, \dots, \bt_0} \cdot  \bnc(f\backslash \{t_i\}) = \bnc(f).
 \]
 So,
 \[
   \ba \cdot \bt_i^{   \bt_{i-1}, \dots, \bt_0} \cdot  \bnc(f\backslash \{t_i\}) = \ba\cdot \bnc(f) = \bb,
 \]
 the last equality coming from $\ba \otimes f \in \Theta_{j}(\bb)$.  It follows that the right hand side of~\eqref{def_partial} is in $\Theta_{j-1}(\bb)$.
  
 To show the second part of the proposition, consider the $\bk$-linear projections $\pi_j : \Theta_j(\bb) \to \bk^{\Delta^+_j}$, for $-1\leq j \leq n-1$, defined on the basis by:
 \[
   \pi_j(\ba\otimes f) := f.
 \]
 Since $\ba\cdot\bnc(f)=\bb$ and $\Dbm$ is a cancellable monoid, we can recover $\ba$ from $\bb$ and $f$.  So $\pi_j$ is injective.  
 
 Let us show that its image is:
 \begin{equation} \label{imageofpi}
   \operatorname{im}(\pi_j) = \operatorname{Span}_{\bk} \big\{ f\in \Delta_j^+ \; \big| \; \nc(f) \leq_T w \big\} = \bk^{\Delta^+_j(w)}
 \end{equation}
 where $w$ is as in the proposition.  From $\ba\cdot\bnc(f)=\bb$ if $\ba \otimes f \in \Theta_{j}(\bb)$, we get that $\bnc(f)$ is a right divisor of $\bb$.  It is also a right divisor of $\bc$ by definition, and of $\bw$ (which was defined as the greatest right common divisor of $\bb$ and $\bc$).  This shows the left-to-right inclusion in~\eqref{imageofpi}.  Reciprocally, let $f \in \Delta_j^+$ be such that $nc(f) \leq_T w$.  So $\bnc(f)$ is a right divisor of $\bw$, and of $\bb$. Let $\ba\in \Dbm$ be such that $\ba\cdot\bnc(f)=\bb$.  We thus have $\ba\otimes f \in \Theta_j(\bb)$ and its image by $\pi_j$ is $f$.
  
 The next property is
 \[
   \beta_j\circ \pi_j = \pi_{j-1} \circ \partial_j
 \]
 for $0\leq j \leq n-1$.  This is straightforward by applying $\pi_{j-1}$ on both sides of~\eqref{def_partial}.  It follows that the map $\pi_j$ realizes an isomorphism between the complexes in~\eqref{eq:complex} and~\eqref{eq:complex2}.
\end{proof}

Recall that we have an indentification $\Alg \otimes \bk^{\Delta^+_{-1}} \simeq \Alg$ and that 
\[
  \Alg^+ = \bigoplus_{i\geq 1} \Alg_i.
\]
Note that we have $\operatorname{im}(\partial_0) \subset \Alg^+ \otimes \bk^{\Delta^+_{-1}}$.

\begin{prop} \label{prop:complexwithoutk}
The diagram
\begin{align}  \label{def_complex_C}
  \begin{split}
  0 \lra \Alg \otimes \bk^{\Delta^+_{n-1}} \stackrel{\partial_{n-1}}{\lra} \dots \stackrel{\partial_1}{\lra}
  \Alg \otimes \bk^{\Delta^+_{0}}
  \stackrel{\partial_0}{\lra}  \\
  \Alg^+ \otimes \bk^{\Delta^+_{-1}}  \lra 0
  \end{split}
\end{align}
is an exact complex of $\Alg$-modules.
\end{prop}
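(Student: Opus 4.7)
The plan is to realize the complex in \eqref{def_complex_C} as a direct sum (over $\bb \in \Dbm \setminus \{1\}$) of the exact complexes $\Theta_\bullet(\bb)$ provided by the preceding proposition. Since direct sums of exact complexes are exact, the conclusion follows immediately once the decomposition is established.

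First, I would observe that for $j \geq 0$, every basis element $\ba \otimes f$ of $\Alg \otimes \bk^{\Delta^+_j}$ belongs to exactly one summand $\Theta_j(\bb)$, namely the one with $\bb = \ba \cdot \bnc(f)$. This gives the $\bk$-linear direct sum decomposition
\[
  \Alg \otimes \bk^{\Delta^+_j} \;=\; \bigoplus_{\bb \in \Dbm} \Theta_j(\bb) \qquad (0 \leq j \leq n-1).
\]
Next, I would note that for $j \geq 0$ one has $\Theta_j(1) = 0$, because $\ba \cdot \bnc(f) = 1$ forces both $\ba = 1$ and $\bnc(f) = 1$ in the atomic monoid $\Dbm$, so $f = \varnothing \notin \Delta^+_j$. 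Therefore the sum can be restricted to $\bb \neq 1$.

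For the rightmost term, $\Theta_{-1}(\bb) = \bk \cdot (\bb \otimes \varnothing)$ is one-dimensional for every $\bb \in \Dbm$, and the subspace $\bigoplus_{\bb \neq 1} \Theta_{-1}(\bb)$ is precisely $\Alg^+ \otimes \bk^{\Delta^+_{-1}}$ under the identification $\Alg \otimes \bk^{\Delta^+_{-1}} \simeq \Alg$. Combining this with the inclusion $\partial_j(\Theta_j(\bb)) \subseteq \Theta_{j-1}(\bb)$ from \eqref{stability_Theta}, the complex \eqref{def_complex_C} decomposes as
\[
  \bigoplus_{\bb \in \Dbm,\, \bb \neq 1} \Big( 0 \lra \Theta_{n-1}(\bb) \stackrel{\partial_{n-1}}{\lra} \dots \stackrel{\partial_0}{\lra} \Theta_{-1}(\bb) \lra 0 \Big).
\]
By the preceding proposition, each summand is isomorphic to the augmented simplicial chain complex of $\Delta^+(w)$ (for $w$ the image of the gcd of $\bb$ and $\bc$), hence is exact. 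In particular, each summand is a genuine chain complex, so $\partial_j \circ \partial_{j+1} = 0$ on each $\Theta_{j+1}(\bb)$ and thus on the whole module; together with the left $\Alg$-linearity of the $\partial_j$ noted after \eqref{def_partial}, this shows \eqref{def_complex_C} is a complex of $\Alg$-modules, and exactness follows from exactness of each summand.

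There is no real obstacle: the work was already done in establishing the preceding proposition. The only mild point to get right is the indexing accounting at the tail of the complex — making sure that omitting $\bb = 1$ from the direct sum corresponds exactly to replacing $\Alg$ by $\Alg^+$ in the rightmost term — but this is immediate once one checks that $\Theta_j(1)$ vanishes for $j \geq 0$ and equals $\bk$ for $j = -1$.
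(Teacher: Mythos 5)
Your proposal is correct and follows essentially the same route as the paper's proof: decompose each term of \eqref{def_complex_C} as $\bigoplus_{\bb\neq \mathbf{1}}\Theta_j(\bb)$ using that $\Theta_j(\mathbf{1})=0$ for $j\geq 0$ while $\Theta_{-1}(\mathbf{1})$ is the line $\bk\cdot(\mathbf{1}\otimes\varnothing)$, then invoke the exactness of each summand from the preceding proposition. The only difference is that you spell out the bookkeeping at the tail of the complex in slightly more detail, which the paper leaves implicit.
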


\begin{proof}
As the maps are clearly $\Alg$-linear, it suffices to show that this is an exact complex of $\bk$-vector spaces.  

First, note that $\dim \Theta_i(\mathbf{1}) = 0$ if $i\geq0$, and $\Theta_{-1}(\mathbf{1})$ is 1-dimensional, generated by $\mathbf{1}\otimes \varnothing$.  Indeed, $\ba\cdot \bnc(f) = \mathbf{1}$ implies $\ba=\mathbf{1}$ and $\bnc(f)=\mathbf{1}$.

It follows that:
\[
  \bigoplus_{\bb\in\Dbm \backslash\{\mathbf{1}\}} \Theta_i(\bb) 
  = 
  \begin{cases}
    \Alg\otimes\bk^{\Delta^+_i} & \text{ if } i\geq0, \\
    \Alg^+\otimes\bk^{\Delta^+_{-1}} & \text{ if } i=-1,
  \end{cases}
\]
because the canonical bases of summands in the left-hand side form a partition of the canonical basis of the right-hand side.  

We can thus see the complex in~\eqref{def_complex_C} as the direct sum of the exact complexes in~\eqref{eq:complex2}, over $\bb \in \Dbm$ such that $ \bb \neq \mathbf{1}$.  The result follows.
\end{proof}

\begin{proof}[Proof of Theorem~\ref{theo:koszulity}]
As $\ker(\eps)=\Alg^+ \simeq \Alg^+ \otimes\bk^{\Delta^+_{-1}}$, we deduce from Proposition~\ref{prop:complexwithoutk} that the complex of $\Alg$-modules in \eqref{def_complex} is exact.  We have thus built a free resolution of $\bk$. 
\end{proof}

 The resolution in Theorem~\ref{theo:koszulity} has finite length.  In the context of Noetherian algebras, this property is useful: a consequence is finiteness of the global dimension, which permits to define the homological quadratic form.  However, it is easily seen that our algebra $\Alg$ is not Noetherian.  We refer to~\cite{UKra} for more on this subject.

\begin{coro}
\label{coro:hilbertdual}
$\Dual$ is a finite-dimensional algebra with Hilbert polynomial
\[
  \operatorname{Hilb}(\Dual,q)=\sum_{f \in \Delta^+} q^{\# f}.
\]
\end{coro}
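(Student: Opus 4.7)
The plan is to read off the Hilbert series of $\Dual$ directly from the minimal free resolution constructed in Theorem~\ref{theo:koszulity}, via the standard Koszul numerical identity of Proposition~\ref{prop:numerical_koszul}.

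First I would note that $\Alg$ is Koszul by Corollary~\ref{coro:koszulity}, so Proposition~\ref{prop:numerical_koszul} applies. Writing the minimal resolution~\eqref{def_complex} in the general form~\eqref{def_resolution}, we have $\Alg^{c_0} = \Alg$ (corresponding to the single face $\varnothing \in \Delta^+_{-1}$) and $c_i = \#\Delta^+_{i-1}$ for $i \geq 1$. Then~\eqref{eq:numerical_koszul1} gives
\[
  \operatorname{Hilb}(\Dual,q) = 1 + \sum_{i \geq 1} \#\Delta^+_{i-1}\, q^i.
\]

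Next I would reindex the right-hand side in terms of faces of $\Delta^+$. A face $f \in \Delta^+_j$ has dimension $j$ and $\#f = j+1$ vertices (with $\#\varnothing = 0$ for $j = -1$). Therefore
\[
  \sum_{f \in \Delta^+} q^{\#f} = \sum_{j \geq -1} \#\Delta^+_j\, q^{j+1} = 1 + \sum_{i \geq 1} \#\Delta^+_{i-1}\, q^i,
\]
which matches the previous expression and yields the claimed formula.

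Finally, finite-dimensionality is immediate: the resolution \eqref{def_complex} has length $n$, so $c_i = 0$ for $i > n$ and the Hilbert series is a polynomial of degree $n$. There is no real obstacle here; the work has all been done in Theorem~\ref{theo:koszulity}, and this corollary is a bookkeeping consequence. The only point to double-check is the indexing convention $c_i = \#\Delta^+_{i-1}$ including the term $c_0 = 1$ from $\Delta^+_{-1} = \{\varnothing\}$, so that the augmentation term in~\eqref{def_complex} fits correctly into the framework of~\eqref{eq:numerical_koszul1}.
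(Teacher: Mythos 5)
Your proof is correct and is exactly the paper's argument: the paper's proof of this corollary is a one-line appeal to the first equality of Proposition~\ref{prop:numerical_koszul} applied to the minimal resolution of Theorem~\ref{theo:koszulity}, and you have simply spelled out the indexing $c_i=\#\Delta^+_{i-1}$ and the reindexing $\#f=\dim(f)+1$ that the paper leaves implicit.
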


\begin{proof}
This is the link between the minimal free resolution and the dual algebra, see the first equality in Proposition~\ref{prop:numerical_koszul}.
\end{proof}

This suggests that there exists a linear basis of $\Dual$ indexed by $\Delta^+$.  An explicit construction will be given in the next section.

It is interesting to note that we get a new proof of the combinatorial identity in~\eqref{eq:muNC}.  Equation~\eqref{eq:hilbertprimal}, and the previous corollary, give the respective Hilbert series of $\Alg$ and $\Dual$.  So, the combinatorial identity follows from the second equality in Proposition~\ref{prop:numerical_koszul}.

\section{Properties of the dual algebra}
\label{sec:propertiesdual}
From the previous section, we know that $\Dual$ is a Koszul algebra with a nice Hilbert function.  Our goal is now to get further properties, and deepen the link between the algebra $\Dual$ and the complex $\Delta^+$.  It is straightforward to describe a presentation of $\Dual$, but it is not always sufficient to get properties of the algebra.  Thus, we again use the minimal free resolution of $\bk$ from the previous section.

{\em As in the previous section, we drop the dependence on $W,c$ in the objects defined. We also fix a $c$-compatible reflection ordering $\prec$ on the set $T$ of reflections of $W$.}

\subsection{A presentation of the dual algebra}
\label{sec:presentation}

The algebra $\Dual$ has a generating set in bijection with $T$ and $\bf T$. To avoid confusion we denote it $\mathbb{T}$. Its elements corresponding to reflections $t$, $u$, etc., are denoted $\dt$, $\du$, etc.  By the discussion in Section~\ref{sec:quadratic}, $\mathbb{T}$ is the basis of $\Alg^!_1 \simeq \Dual_1$ dual to the basis $\bT$ of $\Alg_1$.

Recall that, by definition of $\Dbm$, we have $\Alg = \mathcal{T}(\bk^{\bT}) / \langle R \rangle$ where $R\subset \bk^{\bT} \otimes \bk^{\bT}$ is generated by $\bt\otimes \bu - \bu\otimes \bt^{\bu}$ for $t,u\in T$ such that $tu\leq_T c$.

\begin{theo}
\label{theo:dual_presentation}
We have $\Dual = \mathcal{T}(\bk^{\dT}) / \langle R^\perp\rangle$, where $R^\perp \subset \bk^{\dT}\otimes \bk^{\dT}$ is spanned by the elements:
\begin{enumerate}
\item \label{it:rel1} $\dt \otimes \dt$ for $t\in T$;
\item \label{it:rel2} $\dt \otimes \du$ for $t,u\in T$ such that $tu\not\leq_T c$;
\item \label{it:rel3} $\du_1\otimes\du_m+\du_m\otimes\du_{m-1}+\cdots+\du_2\otimes\du_1$ 
for each $w\in NC_2$, where $\TT(w) = \{u_1 \prec \dots \prec u_m\}$.
\end{enumerate}
\end{theo}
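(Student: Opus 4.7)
By the definition of quadratic dual, $\Dual = \mathcal{T}(\bk^{\dT})/\langle R^\perp \rangle$, with $R^\perp \subset \bk^{\dT} \otimes \bk^{\dT}$ the orthogonal of $R$ from~\eqref{eq:pres_alg} under the natural pairing $\langle \dt \otimes \du, \bt' \otimes \bu' \rangle = \delta_{t,t'}\delta_{u,u'}$.  My plan is to compute $R^\perp$ by a direct orbit analysis and to match a basis to the three families listed in the statement.

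Write a general element as $X = \sum_{t, u \in T} c_{t,u}\, \dt \otimes \du$.  Pairing $X$ against the generator $\bt_0 \otimes \bu_0 - \bu_0 \otimes \bt_0^{\bu_0}$ of $R$ (for $t_0 u_0 \leq_T c$) yields $c_{t_0, u_0} - c_{u_0, t_0^{u_0}}$.  Hence $X \in R^\perp$ if and only if the function $(t, u) \mapsto c_{t, u}$ is constant on the orbits of the equivalence relation $\approx$ on $T \times T$ generated by $(t, u) \approx (u, t^u)$ whenever $tu \leq_T c$.  A basis of $R^\perp$ is then furnished by the characteristic sums $\chi_O := \sum_{(t,u) \in O} \dt \otimes \du$, as $O$ ranges over the $\approx$-orbits.

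It remains to classify these orbits.  When $t = u$, the relation reads $(t, t) \approx (t, t^t) = (t, t)$, so $\{(t, t)\}$ is a singleton orbit, yielding type~\eqref{it:rel1}.  When $t \neq u$ and $tu \not\leq_T c$, the pair $(t, u)$ cannot appear on either side of a generating relation: if $(t, u) = (u_0, t_0^{u_0})$ with $t_0 u_0 \leq_T c$, then $t_0 = tut$ and $t_0 u_0 = tu$, contradicting $tu \not\leq_T c$.  This gives the singletons of type~\eqref{it:rel2}.  When $t \neq u$ with $w := tu \in NC_2$, Lemma~\ref{lem:rank2} furnishes a cyclic indexing $\TT(w) = \{u_1, \dots, u_m\}$ in which $w = u_i u_{i-1}$ for all $i$ (with $u_0 = u_m$).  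Shifting the dihedral identity gives $u_i u_{i-1} = u_{i-1} u_{i-2}$, hence $u_i^{u_{i-1}} = u_{i-1} u_i u_{i-1} = u_{i-2}$, so $(u_i, u_{i-1}) \approx (u_{i-1}, u_{i-2})$, and iterating shows the orbit is $\{(u_j, u_{j-1})\}_{j=1}^m$.  Its characteristic sum $\sum_{j=1}^m \du_j \otimes \du_{j-1}$ is precisely the element of type~\eqref{it:rel3}.

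The main subtlety I anticipate is reconciling the cyclic Lemma~\ref{lem:rank2} indexing (for which $w = u_i u_{i-1}$) with the linear $\prec$-indexing used in~\eqref{it:rel3}: I need to check that a $c$-compatible reflection ordering restricts on each $\Gamma(w)$ with $w \in NC_2$ to the dihedral order compatible with $w$ (and not $w^{-1}$), which should follow from the definitions recalled in the appendix.  Granting this, orbits arising from distinct $w \in NC_2$ are automatically disjoint, since any pair in the orbit recovers $w = u_i u_{i-1}$.  The families~\eqref{it:rel1}--\eqref{it:rel3} therefore enumerate all $\chi_O$ as $O$ runs over the $\approx$-orbits, and thus span $R^\perp$, proving the theorem.
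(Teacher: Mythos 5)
Your proof is correct and is essentially the paper's own argument in different clothing: your orbit decomposition of $T\times T$ under $(t,u)\approx(u,t^u)$ is exactly the paper's direct-sum decomposition of $\bk^{\dT}\otimes\bk^{\dT}$ into $R_1\oplus R_2\oplus\bigoplus_{w\in NC_2}R_w$ (the product $tu$ being the orbit invariant), and your indicator sums of orbits are precisely the blockwise orthogonal complements computed there. The one point you flag as needing verification---that the $\prec$-increasing indexing of $\TT(w)$ coincides with the Lemma~\ref{lem:rank2} indexing oriented so that $w=u_iu_{i-1}$---is exactly the content of Definition~\ref{def:compatiblereforder} ($c$-compatibility of the reflection ordering), so nothing is missing.
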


\begin{proof} For $i\in\{1,2,3\}$, let $R_i \subset \bk^{\dT}\otimes \bk^{\dT}$ be the subspace spanned by elements in $(i)$ as in the theorem.  Moreover, for $w\in NC_2$, let $R_w \subset \bk^{\dT}\otimes \bk^{\dT}$ be the subspace spanned by elements $\dt\otimes \du$ such that $tu=w$.  Its linear dual can be seen as the subspace $R^*_w \subset \bk^{\bT}\otimes \bk^{\bT}$ generated by elements $\bt\otimes \bu$ such that $tu=w$.
 
 Clearly, $\bk^{\dT}\otimes \bk^{\dT}$ is the direct sum of $R_1$, $R_2$, and $R_w$ for $w\in NC_2$.  
 
 By homogeneity of the relations defining $\Dbm$, we have $R = \oplus_{w\in NC_2} (R\cap R_w^*)$. So, $R^\perp$ is the direct sum of $R_1$, $R_2$, and $( R \cap R_w^* )^\perp$ for $w\in NC_2$ (where the orthogonal is taken inside $R_w$).  
 
 It remains to identify these subspaces $( R \cap R_w^* )^\perp$. 
 Let $u_1,\dots,u_m$ be as in the theorem.  It is easily seen that $R \cap R_w^*$ is generated by the elements $\bt_i\otimes \bt_{i-1} - \bt_j\otimes \bt_{j-1}$ (taking indices modulo $m$), so that the orthogonal in $R_w$ is $1$-dimensional generated by $\sum_{i=1}^m \dt_i\otimes\dt_{i-1}$.
\end{proof}

\begin{exam}
\label{exam:typeA2} Take $W=\mathfrak{S}_n$ and $c=(1,2,\dots,n)$ as in Example~\ref{exam:typeA}, where the relations for the dual braid monoid $\Dbm$ are given. The relations for the dual algebra are:
\[
 \begin{cases}
  \dt_{i,j}^2 &\text{ for all }i<j ;\\
  \dt_{i,j}\dt_{k,l} &\text{ if }i\leq  k<j\leq l\text{ or }k<i\leq l<j;\\
  \dt_{i,j}\dt_{k,l}+\dt_{k,l}\dt_{i,j} &\text{ if }i<j<k<l\text{ or }i<k<l<j;\\
  \dt_{i,j}\dt_{j,k}+\dt_{j,k}\dt_{i,k}+\dt_{i,k}\dt_{i,j} &\text{ for }i<j<k.
 \end{cases}
\]
\end{exam}

\begin{rema}
 Each term $\dt\otimes\du$ for $\dt,\du\in\dT$ appears exactly once in the relations given in Theorem~\ref{theo:dual_presentation}.  It follows that $\tau := \sum_{\dt\in\dT} \dt \in \Dual_1$ satisfies $\tau^2=0$.  In this situation, the maps $\Dual_i \to \Dual_{i+1}$ defined by left multiplication by $\tau$ form a complex of vector spaces.
\end{rema}

\subsection{Relating the minimal resolution to the dual algebra}
\label{sec:relating}

It is well known to experts that the maps $\partial_i$ from Section~\ref{sec:koszulity} are related with the product of $\Dual$.  For later use, we explain this point.

Let $\Dual^*$ denote the linear dual of $\Dual$. 
It is naturally a graded coalgebra, with the coproduct $\delta$ obtained by dualizing the product of $\Dual$.  In degree $1$, we have $\Dual_1^* = (\bk^{\dT})^* = \bk^{\bT}$.  So it is natural to denote generic elements of $\Dual^*$ with $\bx,\by$, etc.  

This coalgebra $\Dual^*$ can be used to define the {\it Koszul complex} of $\Alg$.  We denote 
\[
   \delta_{(1,j-1)} : \Dual^*_j \lra \Dual^*_1 \otimes \Dual^*_{j-1}
\]
the homogeneous part of $\delta$ of degree $(1,j-1)$. Using Sweedler's notation, we write $\delta_{(1,j-1)}(\bx) = \sum_{\delta,1,j-1} \bx' \otimes \bx''$.  We define $\Alg$-module maps $\eth_j : \Alg \otimes \Dual^*_j \lra \Alg \otimes \Dual^*_{j-1}$ by
\begin{equation} \label{rel_eth_delta}
  \eth_j( \bb\otimes \bx ) = \sum_{\delta,1,j-1} (\bb \bx') \otimes \bx''.
\end{equation}
From the general theory of Koszul algebras (see~\cite[Definition-Theorem~1, $xxi$]{Fro99} or \cite[Chapter~2]{PP05}), the minimal free resolution of $\bk$ is:
\begin{equation} \label{other_free_resolution}
  0 \lra \Alg \otimes \Dual^*_n \stackrel{\eth_{n}}{\lra} \dots
  \stackrel{\eth_{1}}{\lra}
  \Alg \otimes \Dual^*_0
  \stackrel{\eps}{\lra}
  \bk \lra 0.
\end{equation}
By uniqueness of this resolution, the complexes in~\eqref{def_complex} and~\eqref{other_free_resolution} are isomorphic.  This means there exist $\Alg$-module isomorphisms 
\[
  \Psi_i : 
  \Alg\otimes\bk^{\Delta^+_{i-1}} \to \Alg \otimes \Dual^*_i
\]
such that:
\begin{itemize}
 \item $\Psi_i$ is homogeneous of degree 0, for $0\leq i \leq n$, i.e.,~$\Psi_i = I \otimes \psi_i$ where $\psi_i: \bk^{\Delta^+_{i-1}}\to \Dual^*_i$ is a $\bk$-linear isomorphism,
 \item $\eth_i \circ \Psi_i = \Psi_{i-1} \circ \partial_{i-1}$, for $1\leq i \leq n$. 
\end{itemize}
The homogeneity of $\Psi_i$ is natural when working in the category of graded modules, see~\cite[Chapter 1.5]{BH93} for instance.  It is well-known that this kind of resolution can be built inductively: knowing $\Psi_{i-1}$, there is a construction of $\Psi_i$.  Consequently, $\psi_1$ can be chosen to be the natural identification $\bk^T \to \bk^{\bT}$, as can be checked by comparing $\eth_1$ and $\partial_0$.

The following comes as no surprise:

\begin{prop} \label{iso_coproduct}
  Define a linear isomorphism $\bk^{\Delta^+} \to \Dual^*$ by $\psi := \oplus_{i=0}^n \psi_i$.  Define $\eta := (\psi\otimes\psi)^{-1}\circ \delta \circ \psi$.
  Then $\eta$ is a graded coproduct on $\bk^{\Delta^+}$ such that:
  \begin{itemize}
   \item there is a coalgebra isomorphism $\psi: \bk^{\Delta^+} \to \Dual^*$, having the canonical isomorphism $\bk^T \to \bk^{\bT}$ as homogeneous component of degree $1$,
   \item Using Sweedler's notation as in~\eqref{rel_eth_delta}, we have:
   \begin{equation}  \label{rel_partial_eta}
     \partial_i(\bb\otimes f) = 
     \sum_{\eta,1,i-1} (\bb \mathbf{f}') \otimes f''.
   \end{equation}
  \end{itemize}
\end{prop}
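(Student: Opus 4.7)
The plan is to establish the listed assertions by transport of structure along $\psi$, and then to unwind the intertwining relation $\eth_i \circ \Psi_i = \Psi_{i-1}\circ \partial_{i-1}$ (already recorded just before the proposition) in order to derive the explicit formula~\eqref{rel_partial_eta}.

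Since each component $\psi_j$ is a linear isomorphism, so is $\psi = \bigoplus_j \psi_j$. By construction, $\eta := (\psi \otimes \psi)^{-1} \circ \delta \circ \psi$ makes $\psi$ a coalgebra map by definition, so coassociativity and counitality of $\eta$ are immediate from those of $\delta$. Adopting the convention that $\bk^{\Delta^+_{j-1}}$ sits in degree $j$ (so that the degree of a face equals its number of vertices, matching Corollary~\ref{coro:hilbertdual}), each $\psi_j : \bk^{\Delta^+_{j-1}} \to \Dual^*_j$ preserves degree, whence $\eta$ is a graded coproduct and $\psi$ is a graded coalgebra isomorphism. The statement that the degree~$1$ component of $\psi$ is the canonical identification $\bk^T \to \bk^{\bT}$ is already observed in the passage immediately preceding the proposition, by direct comparison of $\eth_1$ with $\partial_0$ on a generator $\bb \otimes t$.

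For the explicit formula, I fix $f \in \Delta^+_i$ and $\bb \in \Dbm$, and write $\eta_{(1,i-1)}(f) = \sum f' \otimes f''$ with $f' \in \bk^{\Delta^+_0}$ and $f'' \in \bk^{\Delta^+_{i-1}}$. By the definition of $\eta$, this is the same as $\delta(\psi_{i+1}(f)) = \sum \psi_1(f') \otimes \psi_i(f'')$ in the appropriate bigraded component. Because $\psi_1$ is the canonical identification, $\psi_1(f')$ is exactly $\mathbf{f}'$ seen as an element of $\Alg_1$. Substituting into the definition~\eqref{rel_eth_delta} of $\eth_{i+1}$ yields $\eth_{i+1}(\bb \otimes \psi_{i+1}(f)) = \sum (\bb \mathbf{f}') \otimes \psi_i(f'')$. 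Combining with $\eth_{i+1} \circ \Psi_{i+1} = \Psi_i \circ \partial_i$ and $\Psi_i = I \otimes \psi_i$, and cancelling the bijection $\psi_i$ on the right tensor factor, gives the desired identity $\partial_i(\bb \otimes f) = \sum (\bb \mathbf{f}') \otimes f''$, which is precisely~\eqref{rel_partial_eta}.

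The argument is essentially formal, and no serious obstacle arises: the hard work has been done in Theorem~\ref{theo:koszulity}, while uniqueness of the minimal free resolution is what produces the intertwining maps $\Psi_i$ used above. The only delicate point is keeping the two gradings aligned—the homological grading of the resolutions against the internal $\mathbb{N}$-grading of the coalgebra $\Dual^*$—so that $\psi$ is honestly a graded isomorphism and the bigraded component $\eta_{(1,i-1)}$ matches exactly the piece produced by $\partial_i$.
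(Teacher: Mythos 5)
Your proposal is correct and follows essentially the same route as the paper: the first part is transport of structure along $\psi$, and the formula~\eqref{rel_partial_eta} is obtained by unwinding the intertwining relation $\eth_i\circ\Psi_i=\Psi_{i-1}\circ\partial_{i-1}$ together with~\eqref{rel_eth_delta}, $\delta\circ\psi=(\psi\otimes\psi)\circ\eta$, and the normalization of $\psi_1$. Your extra care in aligning the homological index with the internal coalgebra degree is welcome (the paper's own statement is slightly loose on this point), but the argument is the same.
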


\begin{proof}
 It is straightforward to check that $\eta$ is a graded coproduct on $\bk^{\Delta^+}$, and $\psi$ satisfies the condition in the first point of the proposition.
 
 Moreover, for $\bb\otimes f \in \Alg \otimes \bk^{\Delta^+_{i-1}}$, we have:
 \[
   \partial_{i-1}(\bb\otimes f) = (\Psi_{i-1}^{-1}\circ\eth_i\circ\Psi_i)(\bb\otimes f) = (\Psi_{i-1}^{-1} \circ \eth_i )(\bb\otimes \psi_i(f)).
 \]
 Using~\eqref{rel_eth_delta} and $\delta\circ\psi = (\psi\otimes\psi)\circ\eta$, this gives:
 \[
   \partial_{i-1}(\bb\otimes f) = \Psi_{i-1}^{-1}\Big( \sum_{\eta,1,i-1} \big(\bb \psi_1(f')) \otimes \psi_{i-1}(f'') \Big).
 \]
 From the assumption on $\psi_1$, and $\Psi_{i-1}=I \otimes \psi_{i-1}$, we get~\eqref{rel_partial_eta}.
\end{proof}

\begin{rema} \label{rem:unshuffle}
  One may ask if there is an explicit coproduct $\eta$ satisfying~\eqref{rel_partial_eta}.  The formula for $\partial_i$ suggests an unshuffling (the dual of a shuffle product), with a twist to take into account the conjugation (i.e., that we have $\bt_i^{   \bt_{i-1}, \dots, \bt_0}$ rather than just $\bt_i$.  This will lead us to the developments in Section~\ref{sec:shuffle}.
\end{rema}

\subsection{A grading of the dual algebra}
\label{sec:grading}
We show that $\Dual$ is a direct sum of subspaces indexed by noncrossing partitions, in such a way that the product behaves well with respect to this decomposition.

\begin{lemm}
 Let $\delta_{(1^j)}$ denote the homogeneous component of degree $(1,\dots,1)$ of the $j$-fold coproduct:
 \[
   \delta_{(1^j)} : \Dual^*_j \lra (\Dual^*_1)^{\otimes j}.
 \]
 Then the image of $\delta_{(1^j)}$ is spanned by elements of the form  $\bt_1 \otimes \dots \otimes \bt_j$ where $t_1\cdots t_j$ is a reduced factorization of some $w\in NC_j$.
\end{lemm}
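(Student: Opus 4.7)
The plan is to transfer the question from $\delta_{(1^j)}$ on $\Dual^*$ to the coproduct $\eta$ on $\bk^{\Delta^+}$, using the coalgebra isomorphism $\psi$ provided by Proposition~\ref{iso_coproduct}. Since $\psi_1$ is the canonical identification $\bk^T \simeq \bk^{\bT}$ sending $t$ to $\bt$, the image of $\delta_{(1^j)}$ corresponds under $\psi_1^{\otimes j}$ to the image of $\eta_{(1^j)} : \bk^{\Delta^+_{j-1}} \to (\bk^T)^{\otimes j}$. It therefore suffices to show that for every face $f \in \Delta^+_{j-1}$, the element $\eta_{(1^j)}(f)$ lies in the linear span of basis tensors $r_0 \otimes \cdots \otimes r_{j-1}$ with $(r_0, \ldots, r_{j-1})$ a reduced factorization of $\nc(f) \in NC_j$.

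I would argue by induction on $j$. For $j = 1$ the statement is tautological, since $\eta_{(1)}$ is the identity on $\bk^{\Delta^+_0} = \bk^T$ and $\nc(\{t\}) = t \in NC_1$. For the inductive step, write $f = \{t_0 \succ \cdots \succ t_{j-1}\}$ and use coassociativity in the form
\[
\eta_{(1^j)} = (\mathrm{id} \otimes \eta_{(1^{j-1})}) \circ \eta_{(1, j-2)}.
\]
The map $\eta_{(1, j-2)}$ is read off by combining the explicit definition~\eqref{def_partial} of $\partial_{j-1}$ applied to $\mathbf{1}\otimes f$ with formula~\eqref{rel_partial_eta}, yielding
\[
\eta_{(1, j-2)}(f) = \sum_{\ell=0}^{j-1} (-1)^\ell\, s_\ell \otimes (f \setminus \{t_\ell\}),
\]
where $s_\ell := t_0 \cdots t_{\ell-1}\, t_\ell\, t_{\ell-1} \cdots t_0 \in T$. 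Applying the inductive hypothesis to each $(j-2)$-face $f \setminus \{t_\ell\}$ produces $\eta_{(1^j)}(f)$ as a signed sum of tensors $s_\ell \otimes r_1 \otimes \cdots \otimes r_{j-1}$, in which $(r_1, \ldots, r_{j-1})$ is a reduced factorization of $\nc(f \setminus \{t_\ell\})$.

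To conclude, I use the fact (from the construction of $\nc$ on faces of the positive cluster complex, Appendix~\ref{sec:clustercomplex}) that $\nc(f \setminus \{t_\ell\}) = t_0 \cdots t_{\ell-1} t_{\ell+1} \cdots t_{j-1}$. A telescoping exploiting $t_i^2 = e$ then gives $s_\ell \cdot \nc(f \setminus \{t_\ell\}) = t_0 \cdots t_{j-1} = \nc(f)$, so $(s_\ell, r_1, \ldots, r_{j-1})$ is a length-$j$ factorization of $\nc(f) \in NC_j$, which is automatically reduced since $\lt(\nc(f)) = j$. The only mildly subtle step is invoking Proposition~\ref{iso_coproduct} to translate the algebraic coproduct on $\Dual^*$ into the combinatorial coproduct $\eta$ on faces; once that identification is in hand, the recursion unwinds mechanically to reduced factorizations.
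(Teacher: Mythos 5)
Your proposal is correct and follows essentially the same route as the paper: transfer to the coproduct $\eta$ on $\bk^{\Delta^+}$ via $\psi$, read off the $(1,\ast)$-component of $\eta$ from the explicit formula for $\partial$, and induct on $j$ using coassociativity together with the identity $(t_\ell^{t_{\ell-1},\dots,t_0})\cdot\nc(f\setminus\{t_\ell\})=\nc(f)$ already established in Section~\ref{sub:minimal_resolution}. (Only a cosmetic remark: your subscript $\eta_{(1,j-2)}$ uses face-dimension rather than coalgebra-degree indexing — the paper writes $\eta_{(1,j-1)}$ for the same map — but the formula you write down is the correct one.)
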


\begin{proof}
 Note that $\psi_1^{\otimes j}$ is the natural identification between $(\bk^{T})^{\otimes j} \to (\bk^{\bT})^{\otimes j}$.  Using the isomorphism $\psi$ as in Proposition~\ref{iso_coproduct}, it suffices to prove the statement with $\bk^{\Delta^+}$ and the map $\eta_{(1^j)}$ defined in the same way as $\delta_{(1^j)}$.
  
 Let $f\in \Delta^+_{j-1}$.  By an induction on $j$, we show that $\eta_{(1^j)}(f)$ only contains terms $u_0\otimes \cdots \otimes u_{j-1}$ such that $u_0\cdots u_{j-1} = \nc(f)$.  Assume $j\geq 2$, since $j=1$ is immediate.  By \eqref{rel_partial_eta} and~\eqref{def_partial}, if $f=\{t_0 \succ \dots \succ t_{j-1}\} \in \Delta^+_{j-1}$, we have:
 \[
   \eta_{(1,j-1)} ( f ) =  \sum_{i=0}^{j-1}  (-1)^i  \cdot ( t_i^{ t_{i-1}, \dots, t_0}) \otimes (f\backslash\{t_i\}).
 \]
 Then, by coassociativity we have:
 \[
   \eta_{(1^j)} (f) = \sum_{i=0}^{j-1}  (-1)^i  \cdot ( t_i^{ t_{i-1}, \dots, t_0}) \otimes \eta_{(1^{j-1})} (f\backslash\{t_i\}).
 \]
 Using the induction hypothesis, this is a linear combination of $( t_i^{ t_{i-1}, \dots, t_0})\otimes u_0 \otimes \dots \otimes u_{j-2}$ where $u_0\cdots u_{j-2} = \nc(f\backslash\{t_i\})$.  We have seen previously that $( t_i^{ t_{i-1}, \dots, t_0}) \cdot \nc(f\backslash\{t_i\}) = \nc(f) $.  The result follows.
\end{proof}

\begin{prop} \label{prop:vanishingprop}
 Let $j\geq 0$ and $t_1,\dots,t_j\in T$.  If $t_1\cdots t_j \notin NC_j$, we have $\dt_1\cdots\dt_{j} = 0$ in $\Dual$.
\end{prop}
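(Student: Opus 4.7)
The plan is to deduce this vanishing from the lemma immediately preceding the statement, via the perfect pairing between $\Dual_j$ and its linear dual $\Dual_j^*$. Since $\Dual$ is finite-dimensional (Corollary~\ref{coro:hilbertdual}), each component $\Dual_j$ is finite-dimensional and the natural evaluation pairing $\langle \cdot, \cdot \rangle : \Dual_j \otimes \Dual_j^* \to \bk$ is non-degenerate. So it suffices to show that $\dt_1 \cdots \dt_j$ pairs to zero against every element of $\Dual_j^*$.

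The key observation is that the iterated multiplication map $\mu_{(1^j)} : \Dual_1^{\otimes j} \to \Dual_j$, sending $\dt_1 \otimes \cdots \otimes \dt_j \mapsto \dt_1 \cdots \dt_j$, is by construction dual to the iterated coproduct component $\delta_{(1^j)} : \Dual_j^* \to (\Dual_1^*)^{\otimes j}$. Since the basis $\dT$ of $\Dual_1$ is dual to the basis $\bT$ of $\Dual_1^*$, we have the explicit formula
\[
  \langle \dt_1 \cdots \dt_j, \bx \rangle
  = \langle \dt_1 \otimes \cdots \otimes \dt_j, \delta_{(1^j)}(\bx) \rangle
  = \sum_{\bu_1 \otimes \cdots \otimes \bu_j} c_{\bu_1,\dots,\bu_j} \prod_{i=1}^{j} \delta_{t_i,u_i},
\]
where $\delta_{(1^j)}(\bx) = \sum c_{\bu_1,\dots,\bu_j}\,\bu_1 \otimes \cdots \otimes \bu_j$.

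By the preceding lemma, only tensors $\bu_1 \otimes \cdots \otimes \bu_j$ for which $u_1 \cdots u_j$ is a reduced factorization of some element of $NC_j$ appear with nonzero coefficient in $\delta_{(1^j)}(\bx)$. Now I would observe that the condition "$t_1 \cdots t_j \in NC_j$" is exactly equivalent to "$t_1 \cdots t_j$ is a reduced factorization of an element of $NC_j$": if $w := t_1 \cdots t_j \in NC_j$ then $\lt(w) = j$ forces the factorization to be reduced, and conversely a reduced factorization of some $w \in NC_j$ is a product of $j$ reflections equal to $w$. Hence under the hypothesis $t_1 \cdots t_j \notin NC_j$, none of the tensors in the support of $\delta_{(1^j)}(\bx)$ can equal $\bt_1 \otimes \cdots \otimes \bt_j$, so every Kronecker product above vanishes. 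Therefore $\langle \dt_1 \cdots \dt_j, \bx \rangle = 0$ for all $\bx \in \Dual_j^*$, and the conclusion follows from non-degeneracy of the pairing.

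The only mild subtlety, and the step I would take care to justify clearly, is the equivalence between "$t_1 \cdots t_j \notin NC_j$" and "$t_1 \cdots t_j$ is not a reduced factorization of an element of $NC_j$"; this bundles together both the case where the product lies outside $NC$ and the case where it lies in $NC$ but with strictly smaller rank. No other obstacle is expected, since all the heavy lifting — the description of the image of $\delta_{(1^j)}$ — is already contained in the previous lemma.
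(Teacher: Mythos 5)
Your proposal is correct and follows essentially the same route as the paper: dualize the $j$-fold multiplication map against the coproduct component $\delta_{(1^j)}$, invoke the preceding lemma describing $\operatorname{im}(\delta_{(1^j)})$, and conclude by orthogonality of basis tensors. The extra care you take over the equivalence between ``$t_1\cdots t_j\notin NC_j$'' and ``not a reduced factorization of an element of $NC_j$'' is a worthwhile clarification that the paper leaves implicit.
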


\begin{proof}
Assume that $t_1\cdots t_j \notin NC_j$.  We want to show that $\dt_1 \otimes \cdots \otimes \dt_j$ is in the kernel of the $j$-fold product $\Dual_1^{\otimes j} \to \Dual_j$.  By duality, this condition is equivalent to $\dt_1 \otimes \cdots \otimes \dt_j \in \operatorname{im} (\delta_{(1^j)})^{\perp}$.  By the previous lemma, $\operatorname{im}(\delta_{(1^j)})$ is linearly generated by elements that are orthogonal to $\dt_1 \otimes \cdots \otimes \dt_j$.  The result follows.
\end{proof}

\begin{rema}  \label{rem:vanishingprop}
  The results about $\Dual$ in the rest of this section will follow from the previous result.  One might ask if this proposition can be proved directly from the presentation of the algebra in Theorem~\ref{theo:dual_presentation} and rewriting techniques. We managed to do this in the simply-laced case. The general case is treated in Zhang's thesis~\cite{Zhang20}.
\end{rema}

\begin{defi} \label{def:PwFw}
 For $0\leq j\leq n$ and $w\in NC_j$, we define: 
 \begin{align}
  \Dual_w &:= 
  \operatorname{Span}_{\bk} \Big\{ \dt_1 \cdots \dt_j \; \Big| \; t_1\cdots t_j=w \Big\}
  \subset \Dual_j,
  \\
  F_w &:= \operatorname{Span}_{\bk} \Big\{ \dt_1\otimes \dots \otimes \dt_j \; \Big| \; t_1\cdots t_j=w \Big\} \subset \mathcal{T}(\bk^{\dT}). 
 \end{align}
\end{defi}

\begin{rema} \label{rema_rel3}
Keeping the notation above, note that the generators of $F_w$ contain no factor $\dt \otimes \dt$ and no factor $\dt \otimes \du$ where $tu\not\leq_T c$.  So, as vector spaces, $\Dual_w = F_w / (F_w \cap \langle R_3 \rangle )$ where $R_3$ is the span of Relations~\eqref{it:rel3} in Theorem~\ref{theo:dual_presentation}.
\end{rema}

\begin{prop} \label{prop:refinedgrading}
We have:
\begin{equation} \label{eq:refinedgrading}
 \Dual=\bigoplus_{w\in NC}  \Dual_w.
\end{equation}
This decomposition is graded in the following sense: for $w,w'\in NC$ and $(\dx,\dx') \in \Dual_w \times \Dual_{w'}$, we have:
\begin{itemize}
    \item $\dx\dx'\in \Dual_{ww'}$ if $ww'\in NC$ and $\lt(w) + \lt(w')=\lt(ww')$,
    \item $\dx\dx'=0$ otherwise.
\end{itemize}
\end{prop}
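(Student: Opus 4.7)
My strategy is to equip $\Dual$ with a supplementary $W$-grading that refines the natural $\mathbb{N}$-grading, and whose homogeneous components turn out to be precisely the subspaces $\Dual_w$. On the tensor algebra $\mathcal{T}(\bk^{\dT})$ there is an evident $W$-valued grading $\phi$ defined on pure tensors by $\phi(\dt_1\otimes\cdots\otimes\dt_j):=t_1\cdots t_j\in W$. Since $W$ is a group, this makes $\mathcal{T}(\bk^{\dT})$ into a $W$-graded algebra. The first step is to check that each of the three families of relations from Theorem~\ref{theo:dual_presentation} is $\phi$-homogeneous. For~\eqref{it:rel1} and~\eqref{it:rel2} this is immediate, as they are single pure tensors. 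For~\eqref{it:rel3} one must show that the $m$ summands $\du_{i+1}\otimes\du_i$ (cyclic indices) all share the same $W$-degree, and this is exactly the identity $u_{i+1}u_i=u_iu_{i-1}$ provided by Lemma~\ref{lem:rank2}; the common value is $w$ itself. Hence $\langle R^\perp\rangle$ is a $\phi$-graded ideal and $\Dual$ inherits a $W$-grading $\Dual=\bigoplus_{w\in W}\Dual^\phi_w$.

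Next, I would identify these graded pieces with the $\Dual_w$. By construction, $\Dual^\phi_w$ is linearly spanned by the images of products $\dt_1\cdots\dt_j$ with $t_1\cdots t_j=w$, for all $j\geq 0$. By Proposition~\ref{prop:vanishingprop}, any such product vanishes in $\Dual$ unless $w\in NC_j$, i.e.\ unless $w\in NC$ and $j=\lt(w)$. Consequently $\Dual^\phi_w=0$ when $w\notin NC$, while for $w\in NC$ the piece $\Dual^\phi_w$ is spanned by products of length exactly $\lt(w)$ whose product in $W$ equals $w$, which is precisely $\Dual_w$ as in Definition~\ref{def:PwFw}. This proves \eqref{eq:refinedgrading}.

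The graded multiplication property then follows immediately. For $\dx\in\Dual_w$ and $\dx'\in\Dual_{w'}$, the product $\dx\dx'$ lies in $\Dual^\phi_{ww'}$ and is a linear combination of products $\dt_1\cdots\dt_{j+j'}$ of length $\lt(w)+\lt(w')$ whose $W$-image is $ww'$. If $ww'\in NC$ with $\lt(ww')=\lt(w)+\lt(w')$, then each such product lies in $\Dual_{ww'}$ by definition; otherwise $ww'\notin NC_{\lt(w)+\lt(w')}$ and Proposition~\ref{prop:vanishingprop} annihilates each summand, forcing $\dx\dx'=0$. The only nonroutine step in the whole argument is the verification that the relations~\eqref{it:rel3} are $\phi$-homogeneous, and this is handled cleanly by Lemma~\ref{lem:rank2}; everything else is bookkeeping against Proposition~\ref{prop:vanishingprop}.
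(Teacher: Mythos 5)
Your proof is correct and rests on the same two ingredients as the paper's own argument: Proposition~\ref{prop:vanishingprop}, and the observation that each relation of type~\eqref{it:rel3} is homogeneous for the $W$-valued degree $\dt_1\otimes\cdots\otimes\dt_j\mapsto t_1\cdots t_j$ (which is exactly the identity $u_{i+1}u_i=u_iu_{i-1}$ from Lemma~\ref{lem:rank2}). Packaging this as a $W$-grading of the tensor algebra that descends to the quotient is a clean reformulation of what the paper does by working inside $F=\bigoplus_{w} F_w$ and checking that $F\cap\langle R_3\rangle$ splits accordingly; the mathematical content is the same.
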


\begin{proof}
 Let us define $F = \bigoplus_{w\in NC} F_w$.  It follows from Proposition~\ref{prop:vanishingprop} that $\Dual$ a quotient of $F$ in a natural way.  By the argument in Remark~\ref{rema_rel3}, we have $\Dual = F / (F \cap \langle R_3 \rangle)$.  It remains to show that $F \cap \langle R_3 \rangle = \bigoplus_{w\in NC} ( F_w \cap \langle R_3 \rangle)$.  This easily follows from the fact that for each relation $\dt_1\otimes\dt_m + \dt_m\otimes\dt_{m-1} + \cdots + \dt_2\otimes\dt_1 \in R_3 $ as in Theorem~\ref{theo:dual_presentation}, we have $t_1 t_m = t_m t_{m-1} = \dots = t_3 t_2 = t_2 t_1$.

 The second part of the proposition follows from the definition of $\Dual_w$ and Proposition~\ref{prop:vanishingprop}.
\end{proof}

\subsection{A basis of the dual algebra}
\label{sec:basis}
We now construct a $\bk$-linear basis of $\Dual$, as announced after Corollary~\ref{coro:hilbertdual}. 

 If $f = \{ t_1\succ \dots \succ t_j \} \in \Delta^+$ is any face, we denote $\df := \dt_1 \cdots \dt_j \in \Dual_j$. Note that it is quite useful to have fixed an ordering of the vertices of $f$: since $\Dual$ is not commutative, one needs to fix the order in which the reflections $\dt_i$ are multiplied to get a well-defined element.

\begin{theo} \label{theo:basis}
A basis of $\Dual_w$ as a vector space is:
\begin{equation} \label{eq:basisw}
  \Big\{ \; \mathbbm{f} \; \Big| \; f \in \Delta^+,\; \nc(f)=w  \Big\}.
\end{equation}
In particular, a basis of $\Dual$ as a vector space is $\big\{ \df \; \big| \; f\in\Delta^+\big\}$.
\end{theo}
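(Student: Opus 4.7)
My plan is to establish the first statement --- that $\{\df : \nc(f)=w\}$ is a basis of $\Dual_w$ --- via a spanning argument combined with a global dimension count. The second statement then follows immediately from the decomposition $\Dual = \bigoplus_{w \in NC} \Dual_w$ of Proposition~\ref{prop:refinedgrading}, since each $\df$ lies in $\Dual_{\nc(f)}$ by construction.

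For spanning: by Definition~\ref{def:PwFw}, $\Dual_w$ is linearly spanned by products $\dt_1 \cdots \dt_j$ over all reduced factorizations $t_1 \cdots t_j=w$. From the cluster complex material of Appendix~\ref{sec:clustercomplex}, the faces $f \in \Delta^+$ with $\nc(f)=w$ are in bijection with the $\succ$-decreasing reduced factorizations of $w$ (writing $f=\{t_0 \succ \cdots \succ t_{j-1}\}$, one has $\df = \dt_0 \cdots \dt_{j-1}$). It therefore suffices to express an arbitrary reduced factorization in terms of $\succ$-decreasing ones modulo the relations. The key local move acts at an adjacent inversion $t_i \prec t_{i+1}$: two distinct reflections always have reflection length $2$, so the subword $(t_i,t_{i+1})$ is reduced, and the subword property gives $t_i t_{i+1} \leq_T w \leq_T c$, placing $t_i t_{i+1} \in NC_2$. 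Writing $\TT(t_i t_{i+1})=\{u_1 \prec \cdots \prec u_m\}$, its reduced factorizations are the sorted pairs $(u_{k+1},u_k)$ for $k=1,\ldots,m-1$ together with the unique unsorted pair $(u_1,u_m)=(t_i,t_{i+1})$. Relation~\eqref{it:rel3} of Theorem~\ref{theo:dual_presentation} then yields
\[
  \du_1 \du_m = -\sum_{k=1}^{m-1} \du_{k+1}\du_k,
\]
replacing the inversion at positions $(i,i+1)$ by a $\bk$-linear combination of $\succ$-sorted adjacent pairs, and each resulting sequence remains a reduced factorization of $w$.

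Termination of the iterative rewriting I would argue via a lexicographic invariant: compare sequences of length $j$ in lex order with $\succ$ taken as ``greater.'' Each rewriting strictly increases this order, since positions $<i$ are untouched while the entry at position $i$ changes from $u_1$ to some $u_{k+1}$ with $k \geq 1$, satisfying $u_{k+1} \succ u_1$. Since the set of reduced factorizations of $w$ is finite by Lemma~\ref{lemm:transitivity}, the process terminates on $\succ$-decreasing factorizations, which establishes the spanning. To close, I count: Corollary~\ref{coro:hilbertdual} gives $\dim \Dual = |\Delta^+| = \sum_{w\in NC} |\{f \in \Delta^+ : \nc(f)=w\}|$ (the map $\nc$ partitions $\Delta^+$), while Proposition~\ref{prop:refinedgrading} gives $\dim \Dual = \sum_w \dim \Dual_w$. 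Together with the spanning bound $\dim \Dual_w \leq |\{f : \nc(f)=w\}|$, every inequality must be an equality, so the spanning set is a basis.

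The main obstacle I anticipate is making the rewriting rigorous under branching: each application of relation~\eqref{it:rel3} produces $m-1$ new summands, and one must check on every branch that the sequence remains a reduced factorization of $w$ and that the lex-order strictly increases (which is straightforward but worth verifying carefully). A secondary technical point is the precise extraction from Appendix~\ref{sec:clustercomplex} of the bijection between $\succ$-decreasing reduced factorizations of $w$ and faces $f \in \Delta^+$ with $\nc(f)=w$; this is the standard consequence of the $c$-compatible reflection ordering structure, but should be cited precisely so that the rewriting endpoints are identified with basis elements $\df$.
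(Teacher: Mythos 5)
Your proposal is correct and follows essentially the same route as the paper: rewrite an arbitrary reduced monomial $\dt_1\cdots\dt_j$ at an adjacent inversion using Relation~\eqref{it:rel3} (noting $t_i=u_1$, $t_{i+1}=u_m$ is the unique increasing factorization), observe that every resulting monomial is strictly larger in the lexicographic order so the rewriting terminates on decreasing monomials, and then conclude by comparing the spanning bound with the dimension count $\dim\Dual=\#\Delta^+$ from Corollary~\ref{coro:hilbertdual}. The only differences are presentational: you make the termination argument and the identification of decreasing factorizations with faces of $\Delta^+$ (via the $c$-compatible ordering, Lemma~\ref{clus_alterdef} applied to $\Gamma(w)$) more explicit than the paper does.
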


\begin{proof}
 Let $0\leq j \leq n$ and $w\in NC_j$, and consider a monomial $\dt_1 \cdots \dt_j$. Suppose that there exists $1\leq i<j$ such that $t_i\prec t_{i+1}$.  Let $\TT(t_it_{i+1}) =   \{u_1,\dots,u_m\}$, indexed so that $u_1\prec \cdots \prec u_m$.  We thus have the following relation in $\Dual$:
\begin{equation*}
  \du_1\du_m + \du_m\du_{m-1}+\cdots+\du_3\du_2+\du_2\du_1 = 0.
\end{equation*}
By the property of a reflection ordering, we have $t_i=u_1$ and $t_{i+1}=u_m$, as $u_1u_m = t_it_{i+1}$ is the unique increasing factorization of $t_it_{i+1}$.
We use the above relation to replace $\dt_i \dt_{i+1} = \du_1 \du_m $ with $-(\du_m\du_{m-1}+\cdots+\du_2\du_1)$ in $\dt_1 \cdots \dt_k$. The resulting monomials are all larger than $\dt_1 \cdots \dt_k$ for the lexicographic order on monomials induced by $\prec$.

It follows that any of the generating monomial of $\Dual_w$ can eventually be rewritten as a linear combination of decreasing monomials, i.e., elements of the set ~\eqref{eq:basisw}.  

It also follows that $\{\, \df \, : \, f\in\Delta^+\}$ spans $\Dual$.
By Corollary~\ref{coro:hilbertdual}, we know that $\dim \Dual = \# \Delta^+$, so this is a $\bk$-linear basis.  This permits us to conclude the proof.
\end{proof}

We will revisit this proof in the next section, and get an explicit expansion in Theorem~\ref{theo:prod}. Below, recall that $\mu_{NC}$ is the Möbius function of $NC$. 

\begin{coro} \label{coro:dim_mu}
 The dimensions of the homogeneous components of $\mathcal{P}$ are given by:
 \[
   \dim( \mathcal{P}_w ) = (-1)^{\lt(w)} \mu_{NC}(w).
 \]
\end{coro}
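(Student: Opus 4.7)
The plan is to convert the claim into a Möbius-type identity and then to feed in the acyclicity of the subcomplexes $\Delta^+(w)$ that was already exploited in Section~\ref{sec:koszulity}. By Theorem~\ref{theo:basis}, the dimension $\dim(\Dual_w)$ equals $a(w):=\#\{f\in\Delta^+\mid \nc(f)=w\}$, and each face contributing to $a(w)$ automatically has cardinality $\lt(w)$, since its vertices form a reduced factorization of $w$. The goal therefore reduces to showing
\[
  a(w) = (-1)^{\lt(w)}\mu_{NC}(w).
\]

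The key intermediate identity is
\[
  \sum_{w'\leq_T w}(-1)^{\lt(w')}a(w') \;=\; \delta_{e,w}.
\]
Once this is established, the usual characterization of the Möbius function---namely that $\mu_{NC}$ is the unique function on $NC$ satisfying $\sum_{w'\leq_T w}\mu_{NC}(w')=\delta_{e,w}$---forces $(-1)^{\lt(w)}a(w)=\mu_{NC}(w)$, which rearranges to the desired formula. To prove the intermediate identity, I would partition the faces of $\Delta^+(w) = \{f\in\Delta^+\mid \nc(f)\leq_T w\}$ by the value of $\nc$:
\[
  \sum_{w'\leq_T w}(-1)^{\lt(w')}a(w') \;=\; \sum_{f\in\Delta^+(w)}(-1)^{\#f}.
\]
For $w=e$ only the empty face contributes and the sum equals $1$. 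For $w\neq e$, the augmented simplicial chain complex of $\Delta^+(w)$ is exact---this is exactly Proposition~\ref{prop:deltaplusw} as invoked in the proof of Theorem~\ref{theo:koszulity}---so $\Delta^+(w)$ is acyclic and the alternating face count, equal to $-\tilde{\chi}(\Delta^+(w))$, vanishes.

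There is no real obstacle beyond recognizing that the necessary geometric input is precisely the acyclicity that already powered the koszulity proof; no new combinatorics is required, and the Möbius inversion step is essentially a bookkeeping matter. As a byproduct, summing $a(w)=(-1)^{\lt(w)}\mu_{NC}(w)$ weighted by $q^{\lt(w)}$ recovers the identity \eqref{eq:muNC}, of which the present corollary is the $NC$-graded refinement.
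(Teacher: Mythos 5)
Your proof is correct, and it takes a genuinely different route from the paper: the paper's own proof is essentially a citation, reducing to the known fact (from Chapoton/Athanasiadis) that the number of faces $f$ with $\nc(f)=w$ is $(-1)^{\lt(w)}\mu_{NC}(e,w)$, "following from the case $w=c$" applied to the parabolic $\Gamma(w)$. You instead give a self-contained derivation: starting from $\dim(\Dual_w)=a(w)$ (Theorem~\ref{theo:basis}), you verify the defining recursion $\sum_{w'\leq_T w}(-1)^{\lt(w')}a(w')=\delta_{e,w}$ of the M\"obius function by identifying the left-hand side with the (negated) reduced Euler characteristic of $\Delta^+(w)$ and invoking the contractibility of that subcomplex (Proposition~\ref{prop:deltaplusw}), which is exactly the topological input already used in the koszulity proof. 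The one step worth flagging is the identification $\Delta^+(w)=\{f\in\Delta^+ \mid \nc(f)\leq_T w\}$, since $\Delta^+(w)$ is defined as the full subcomplex on the vertices $t\leq_T w$; this requires knowing that a face all of whose vertices lie below $w$ has $\nc(f)\leq_T w$ (its $\nc$ is the join of its vertices in the lattice $NC$), but the paper itself asserts this identification in equation~\eqref{imageofpi}, so you may cite it. The payoff of your approach is that the corollary, and hence the identity~\eqref{eq:muNC}, is proved from scratch inside the paper's own framework rather than imported; the cost is a slightly longer argument where the paper is content with a pointer to the literature.
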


\begin{proof}
The number of $f \in \Delta^+$ such that $nc(f)=w$ is known to be the Möbius number $\mu_{NC}(1,w)$.  This follows from the case $w=c$.  See \cite{Cha05} for details.
\end{proof}

\begin{rema}  \label{rem:zhang}
  We have shown that each monomial in $\Dual(W)$ can be rewritten as a decreasing monomial, and these form a basis.  This means that we have built a {\it Poincaré-Birkhoff-Witt basis} (PBW basis) of $\Dual(W)$.  It is known that a quadratic algebra with a such a basis is Koszul (see~\cite[Chapter~4]{LV12}). 
  
  Therefore, we have an alternative path to prove koszulity of $\Alg(W)$ and $\Dual(W)$: if we can get the vanishing property (Proposition~\ref{prop:vanishingprop}) from the presentation of $\Dual(W)$ in Theorem~\ref{theo:dual_presentation}, we obtain Koszulity by means of the PBW basis.  As mentioned in Remark~\ref{rem:vanishingprop}, such a proof of Proposition~\ref{prop:vanishingprop} is given by Zhang~\cite{Zhang20}. 
\end{rema}

\section{A geometric rule for the product in the dual algebra}
\label{sec:dualproduct}

Having just built a basis of $\Dual$ in Theorem~\ref{theo:basis}, it is natural to ask about the {\em structure constants} of $\Dual$ with respect to this basis: that is, what is the expansion of the product of basis elements in the basis ?
The goal of this section is to give an elegant geometric rule to answer this question, see Theorem~\ref{theo:prod} and Corollary~\ref{coro:dual_product}.  In particular, we will see that the structure constants are in $\{-1,0,1\}$: that is, the desired expansion is {\em multiplicity-free}.\smallskip 

The result relies on a geometric realization of the cluster complex $\Delta$, that we describe now.

\subsection{The cluster fan}
\label{sec:clusterfan}

Fomin and Zelevinsky \cite{FZ03} showed that the cluster complex naturally defines a complete simplicial fan (see~\cite{Zie95} for terminology about simplicial fans and cones).  This result also holds in the present situation (where $W$ is possibly non crystallographic, and $c$ possibly non bipartite), see~\cite{RS09}.

\begin{defi}
If $t_1,\dots, t_j\in T$ are such that $t_1\cdots t_j \in NC_j$, we denote:
\[ 
  \gamma(t_1,\cdots,t_j) :=  \operatorname{Span}_{\mathbb{R}^+}  \{ \rho(t_1), \dots , \rho(t_j) \}.
\]
Moreover, if $f=\{t_1 \succ \dots \succ t_j\}\in \Delta^+$, we denote $\gamma(f):=\gamma(t_1,\dots,t_j)$.
\end{defi}

Note that with the above notation, $\gamma(t_1,\dots,t_j)$ and $\gamma(f)$ are simplicial cones of dimension $j$, as the generating vectors are linearly independent by Lemma~\ref{lin_indep}. Recall that a fan is a set of cones which such that is stable under taking faces, and intersection. The \emph{support} of a cone is the union of all its cones.   

\begin{prop}[\cite{FZ03,RS09,Cha05}]
  The cones $( \gamma(f) )_{f\in \Delta^+}$ form a simplicial fan. Its support is the cone generated by simple roots. 
\end{prop}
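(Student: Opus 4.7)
The plan is to reduce this proposition directly to the cluster fan theorem already in the literature. By Fomin-Zelevinsky~\cite{FZ03} in the crystallographic bipartite case, and by Reading-Speyer~\cite{RS09} for an arbitrary finite Coxeter group $W$ and arbitrary standard Coxeter element $c$, the family $(\gamma(f))_{f \in \Delta(W)}$ indexed by all faces of the full cluster complex forms a complete simplicial fan in $\mathbb{R}^n$, the \emph{cluster fan}. The statement of the proposition is then obtained by restriction to the positive part.

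First I would verify the simpliciality of each cone: for $f = \{t_1 \succ \dots \succ t_j\} \in \Delta^+$, the element $\nc(f) = t_1 \cdots t_j$ lies in $NC_j$, so by Lemma~\ref{lin_indep} the roots $\rho(t_1), \dots, \rho(t_j)$ are linearly independent, and $\gamma(f)$ is a genuine simplicial cone of dimension $\dim(f)+1$. Next I would check that $\{\gamma(f) : f \in \Delta^+\}$ is closed under taking faces and under intersection. The former is immediate since $\Delta^+$ is a subcomplex of $\Delta$: any face of $\gamma(f)$ is of the form $\gamma(f')$ for $f' \subseteq f$, and $f' \in \Delta^+$. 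The latter is inherited from the fan property of the full cluster fan: for $f, g \in \Delta^+$, the intersection $\gamma(f) \cap \gamma(g)$ is a face of both in the cluster fan, hence of the form $\gamma(h)$ for some $h \in \Delta$, and since its rays lie in the support of $\Delta^+$ we have $h \in \Delta^+$.

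It remains to identify the support of the fan as the simple-root cone $C := \operatorname{Span}_{\mathbb{R}^+}\{\rho(s_1), \dots, \rho(s_n)\}$. The inclusion of the support in $C$ is straightforward: each ray $\rho(t)$ for $t$ a vertex of some $f \in \Delta^+$ is a positive root, hence a nonnegative combination of simple roots. For the reverse inclusion, one would invoke Chapoton's description~\cite{Cha05}, which shows that the positive clusters (facets of $\Delta^+$) provide a triangulation of $C$; equivalently, the restriction of the cluster fan to $C$ is precisely the positive subfan.

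The main obstacle in any self-contained proof would be this last covering statement, namely that every ray inside $C$ is hit by some maximal cone $\gamma(f)$ with $f$ a positive cluster. For the crystallographic bipartite case this is part of the original construction of the cluster fan, and the extension to arbitrary $(W,c)$ in \cite{RS09} together with the identification of positive clusters in \cite{Cha05} deliver the result in full generality. Given that this material is already documented in the cited references, the argument needed in the body of the paper can be kept brief: invoke \cite{FZ03,RS09} for the fan property and \cite{Cha05} for the identification of the support.
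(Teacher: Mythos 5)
Your proposal follows the same route the paper takes: the paper gives no proof of this proposition, simply attributing the full cluster fan to \cite{FZ03,RS09}, observing that the positive part is a subfan, and citing \cite{Cha05} for the identification of the support as the positive span of the (positive, equivalently simple) roots. Your additional checks (simpliciality via Carter's lemma, closure under faces and intersections inherited from the ambient fan) are correct and just make explicit what the paper leaves implicit.
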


We refer to this fan as the \emph{positive cluster fan} (relative to $W,c$). This fan is clearly not complete, since it is defined as a subfan of the one corresponing to the whole cluster complex $\Delta$.  Rather than the total space, the union of $\gamma(f)$ for $f\in \Delta^+$ is the positive span of positive roots \cite{Cha05}. 

A similar result holds for each subcomplex $\Delta(w)$ with $w\in NC$ defined in Appendix~\ref{sec:clustercomplex}.  It follows from the previous proposition applied to the subgroup $\Gamma(w)$, with $w$ as its standard Coxeter element.

\begin{prop}  \label{prop:fans}
  For each $w\in NC$, the cones $( \gamma(f) )_{f \in \Delta^+(w)}$ form a simplicial fan.
\end{prop}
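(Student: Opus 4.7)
The plan is to bootstrap from the previous proposition by restricting attention to the parabolic subgroup $\Gamma(w)$. Recall from Section~\ref{sec:dualbraidmonoid} that $\Gamma(w)$ is the smallest parabolic subgroup of $W$ containing $w$; it is itself a finite reflection group, acting on the linear subspace $V = \operatorname{Fix}(\Gamma(w))^\perp \subset \mathbb{R}^n$, whose set of reflections is $\TT(w) = \{t \in T : t \leq_T w\}$, and whose rank equals $\lt(w)$. The half-space $\Pi \cap V$ equips $\Gamma(w)$ with a canonical system of positive roots, namely $\{\rho(t) : t \in \TT(w)\}$, and hence with a canonical set of simple generators.

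The first key step is to invoke Proposition~\ref{parabolic_standard_coxeter_element}, which says that $w$ is a standard Coxeter element of $\Gamma(w)$ (with respect to that canonical simple system). Thus the pair $(\Gamma(w), w)$ is of exactly the same type as the pair $(W, c)$ appearing in the previous proposition, only of smaller rank.

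Next, I would observe that the complex $\Delta^+(w)$ and the cones $\gamma(f)$ appearing in the statement coincide with the positive cluster complex and the cones attached to $(\Gamma(w), w)$, as defined in Appendix~\ref{sec:clustercomplex}. Indeed, the faces of $\Delta^+(w)$ are (by definition, see the Appendix) sets $\{t_1, \dots, t_j\} \subset \TT(w)$ that admit an ordering whose product lies in the interval $[e,w]$ of absolute order, which is precisely the positive cluster complex of $(\Gamma(w), w)$; and the cone $\gamma(f) = \operatorname{Span}_{\mathbb{R}^+}\{\rho(t_1), \dots, \rho(t_j)\}$ is spanned by positive roots of $\Gamma(w)$, so it is the very cone produced by the fan construction applied to $(\Gamma(w), w)$.

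Applying the previous proposition to $(\Gamma(w), w)$ then yields immediately that $(\gamma(f))_{f \in \Delta^+(w)}$ is a simplicial fan in $V$ (supported on the positive cone of the simple roots of $\Gamma(w)$). The only point that requires care, and which is the closest thing to an obstacle, is the identification in the previous paragraph: one must verify that the Appendix's definition of $\Delta^+$ and $\gamma$ genuinely restricts to the parabolic case. Since Lemma~\ref{lin_indep} guarantees linear independence of the generating vectors of each $\gamma(f)$ (so that each cone is indeed simplicial), and since the previous proposition handles all the nontrivial content (closure under faces and intersections), no further work is required beyond unwinding definitions.
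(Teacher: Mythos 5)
Your proof is correct and follows exactly the paper's route: the paper likewise derives this from the preceding proposition applied to the parabolic subgroup $\Gamma(w)$ with $w$ as its standard Coxeter element (via Proposition~\ref{parabolic_standard_coxeter_element}), using the identification of $\Delta^+(w)$ with the positive cluster complex of $(\Gamma(w),w)$. The only cosmetic difference is that you spell out the identification of complexes and cones in detail, whereas the paper compresses this into a single sentence and defers the identification to the proof of Proposition~\ref{prop:deltaplusw}.
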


\begin{figure}[!ht]
    \centering
    \includegraphics{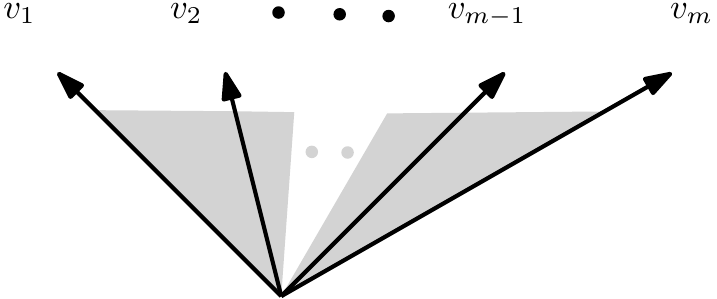}
    \caption{Positive cluster fan in rank $2$.}
    \label{fig:fan_dim_2}
\end{figure}

\subsection{Product in $\Dual$}

Through this section, we fix a tuple of reflections $(t_1,\dots,t_j)$, and let $w=t_1\cdots t_j$. By Proposition~\ref{prop:vanishingprop}, we know the product $\dt_1\cdots \dt_j$ is zero if $w \notin NC_j$. We can thus restrict to the following case: \emph{ $w \in NC_j$, and thus $(t_1,\dots,t_j)$ is a reduced expression for $w$}.
\smallskip
Let 
\[
  M(t_1,\dots,t_j) :=  \operatorname{Span}_{\mathbb{R}}  \{ \rho(t_1), \dots , \rho(t_j) \}. 
\]
This is the {\it moved space} of $w$, see~\cite[Chapter~4]{Armstrong06}.
This space is endowed with an orientation, by declaring $(\rho(t_1) , \dots , \rho(t_j))$ as a positive ordered basis.  Moreover, $\gamma(t_1,\dots,t_j)$ is full-dimensional in $M(t_1, \allowbreak \dots, \allowbreak t_j)$, by Lemma~\ref{lin_indep}.

\begin{defi}[The sign $\omega$]
 If $u_1,\dots,u_j\in T$ are such that $u_1 \cdots u_j=w$, we define a sign $\omega(u_1,\cdots,u_j)\in \{\pm 1\}$ by the condition that the value is $1$ (resp., $-1$) if $(\rho(u_1),\dots,\rho(u_j))$ is a positive (resp., negative)  basis of $M(t_1,\dots,t_j)$.  If $f = \{ u_1 \succ \dots \succ u_j\}\in\Delta^+$ and $\nc(f)=w$, we denote $\omega(f) = \omega(u_1,\dots,u_j)$.
\end{defi}

Note that in the situation above, $(\rho(u_1),\dots,\rho(u_j))$ is a basis of $M(w)$ by Lemma~\ref{lin_indep}.  So $\omega(u_1,\dots,u_j)$ is well-defined.

\begin{theo} \label{theo:prod}
  Let
  \[
   X(t_1,\dots,t_j) := \big\{ f \in \Delta^+ \; \big| \;  \nc(f)=w \text{ and } \gamma(f) \subset \gamma(t_1,\dots,t_j) \big\}.
  \]
  Then, the cones $\gamma(f)$ for $f\in X(t_1,\dots,t_j) $ are the maximal cones of a simplicial fan with support $\gamma(t_1,\dots,t_j)$, and there holds   
  \begin{align}  \label{exp_prod}
   \dt_1 \cdots \dt_j = \sum_{f\in X(t_1,\dots,t_j)} \omega(f) \cdot \df.
  \end{align}
\end{theo}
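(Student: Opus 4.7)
The plan is to induct on the tuple $(t_1,\dots,t_j)$ in the lexicographic order induced by $\prec$, starting from lex-maximal tuples and descending. The rewriting procedure used in the proof of Theorem~\ref{theo:basis} drives the induction: applying relation~\ref{it:rel3} of Theorem~\ref{theo:dual_presentation} at an ascent produces a signed sum of lex-larger tuples. The base case is a tuple with no ascent, i.e.\ $t_1\succ\dots\succ t_j$; then $f:=\{t_1,\dots,t_j\}$ is a face of $\Delta_{j-1}^+(w)$ (by the definition of $\Delta^+$ via $c$-compatible reflection orderings), $\omega(f)=1$, and $\gamma(f)=\gamma(t_1,\dots,t_j)$. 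The fan axioms of the cluster fan of $\Gamma(w)$ force any other $g$ with $\nc(g)=w$ and $\gamma(g)\subset\gamma(f)$ to coincide with $f$; hence $X(t_1,\dots,t_j)=\{f\}$ and the formula reduces to $\dt_1\cdots\dt_j=\df$.

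For the inductive step, suppose $t_i\prec t_{i+1}$. Set $w_2:=t_it_{i+1}\in NC_2$ and $\TT(w_2)=\{u_1\prec\dots\prec u_m\}$, so $t_i=u_1$ and $t_{i+1}=u_m$. The rank-$2$ relation yields
\[
\dt_1\cdots\dt_j=-\sum_{\ell=2}^{m}\dt_1\cdots\dt_{i-1}\,\du_\ell\du_{\ell-1}\,\dt_{i+2}\cdots\dt_j,
\]
and each summand corresponds to a reduced expression for $w$ lex-larger than $(t_1,\dots,t_j)$ (since $u_\ell\succ u_1=t_i$ for $\ell\ge 2$), hence covered by the inductive hypothesis.

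Two ingredients then combine these pieces into the theorem. The \emph{geometric decomposition} asserts that $\gamma(t_1,\dots,t_j)$ is the interior-disjoint union of the cones $\gamma(t_1,\dots,u_\ell,u_{\ell-1},\dots,t_j)$ for $\ell=2,\dots,m$; this reduces to the rank-$2$ tiling $\gamma(u_1,u_m)=\bigcup_{\ell=2}^m\gamma(u_\ell,u_{\ell-1})$ (the positive roots of $\Gamma(w_2)$ subdivide its fundamental cone), extended along the fixed directions $\rho(t_k)$ with $k\ne i,i+1$. Combined with the fan axioms of $\Delta^+(w)$, it implies $X(t_1,\dots,t_j)=\bigsqcup_{\ell=2}^m X(t_1,\dots,u_\ell,u_{\ell-1},\dots,t_j)$, and glues the sub-fans supplied by the inductive hypothesis into the asserted simplicial fan of $\gamma(t_1,\dots,t_j)$. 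The \emph{sign comparison} says that passing from $(\dots,u_1,u_m,\dots)$ to $(\dots,u_\ell,u_{\ell-1},\dots)$ alters the basis of $M(w)$ by a block-diagonal transformation whose only nontrivial block is the $2\times 2$ change-of-basis from $(\rho(u_1),\rho(u_m))$ to $(\rho(u_\ell),\rho(u_{\ell-1}))$; since the positive roots of the rank-$2$ subsystem are cyclically ordered in a half-plane, this block has negative determinant. Hence $\omega(f)$ with respect to the new tuple equals $-\omega(f)$ with respect to $(t_1,\dots,t_j)$. Feeding the inductive hypothesis into each summand and tracking the explicit $-1$ from the rewriting gives
\[
\dt_1\cdots\dt_j=\sum_{\ell=2}^m\sum_{f\in X(t_1,\dots,u_\ell,u_{\ell-1},\dots,t_j)}\omega(f)\,\df=\sum_{f\in X(t_1,\dots,t_j)}\omega(f)\,\df.
\]

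I expect the main obstacle to be the geometric decomposition, specifically verifying that every $j$-dimensional cluster cone of $\Gamma(w)$ contained in $\gamma(t_1,\dots,t_j)$ lies entirely inside exactly one sub-cone. This rests on the rank-$2$ tiling combined with the transverse fan structure of $\Delta^+(w)$; once in place, the remainder of the argument is sign bookkeeping through the inductive hypothesis.
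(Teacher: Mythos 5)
Your proposal is correct and follows essentially the same route as the paper: lexicographic induction with decreasing tuples as the base case, the rank-$2$ relation to rewrite at an ascent, the lift of the two-dimensional fan picture to decompose $X(t_1,\dots,t_j)$, and the orientation flip cancelling the explicit $-1$ from the relation. The geometric point you flag as the main obstacle (that each maximal cluster cone inside $\gamma(t_1,\dots,t_j)$ lies in exactly one sub-cone) is also the step the paper leaves implicit, so your write-up is if anything slightly more detailed than the original.
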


We have the following immediate corollary, which describes the structure constants of $\Dual$ with respect to the basis given in Theorem~\ref{theo:basis}.
\begin{coro}
\label{coro:dual_product}
 Let $f_1=\{ t_1 \succ \dots \succ t_i\},f_2=\{ u_1 \succ \dots \succ u_j\}$ be two faces of $\Delta^+$. Then $\df_1 \df_2=0$ unless $(t_1,t_2,\dots,t_i,u_1,u_2,\dots,u_j)$ is a reduced sequence whose product is in $NC$. In that case
 $$\df_1 \df_2 = \sum_{f\in X(t_1,\dots,t_i,u_1,\dots,u_j)} \omega(f) \cdot \df$$.
 \end{coro}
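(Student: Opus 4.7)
The approach is to deduce the corollary directly from Theorem~\ref{theo:prod} applied to the concatenated sequence. By the convention $\df = \dt_1 \cdots \dt_k$ for a decreasing face $f = \{t_1 \succ \cdots \succ t_k\}$, fixed just before Theorem~\ref{theo:basis}, one has $\df_1 \df_2 = \dt_1 \cdots \dt_i \du_1 \cdots \du_j$, which is precisely the monomial attached to the concatenated tuple $S := (t_1, \ldots, t_i, u_1, \ldots, u_j)$. The tuple $S$ need not be decreasing, but Theorem~\ref{theo:prod} is stated for an arbitrary tuple of reflections, so this is harmless.

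For the vanishing criterion, I invoke Proposition~\ref{prop:vanishingprop}: the monomial $\dt_1 \cdots \dt_{i+j}$ vanishes in $\Dual$ unless $t_1 \cdots t_i u_1 \cdots u_j$ lies in $NC_{i+j}$, which is exactly the condition that $S$ be reduced with product in $NC$. In the nonvanishing case, $S$ is a reduced factorization of some $w \in NC_{i+j}$, so Theorem~\ref{theo:prod} applies and yields $\df_1 \df_2 = \sum_{f \in X(S)} \omega(f) \df$, as desired.

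The corollary itself poses no real obstacle, being a direct reformulation of Theorem~\ref{theo:prod} for a concatenated pair of reduced expressions; all of the substantive content is contained in that theorem. Were one to attack Theorem~\ref{theo:prod} itself, a natural strategy would be simultaneous induction on the lexicographic order of reduced expressions, with the base case handling decreasing sequences (where $X$ collapses to a single face with $\omega=+1$) and the inductive step invoking the rank-$2$ relation of Theorem~\ref{theo:dual_presentation}(\ref{it:rel3}) at an ascent $t_i \prec t_{i+1}$; geometrically each such application subdivides a planar cone $\gamma(t_i,t_{i+1})$ into finer cones $\gamma(u_k,u_{k-1})$, and the sign in the relation exactly compensates for the induced reversal of orientation measured by $\omega$. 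The delicate point in that argument would be to verify that the resulting iterated subdivision of $\gamma(t_1,\ldots,t_j)$ agrees with the restriction of the cluster fan of $\Gamma(w)$ from Proposition~\ref{prop:fans}, so that the indexing set matches $X(t_1,\ldots,t_j)$ as defined.
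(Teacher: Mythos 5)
Your deduction is correct and is exactly the route the paper takes: the corollary is stated there as an immediate consequence of Theorem~\ref{theo:prod} applied to the concatenated tuple, with the vanishing case handled by Proposition~\ref{prop:vanishingprop} just as you do. Your closing sketch of how Theorem~\ref{theo:prod} itself is proved also matches the paper's argument (lexicographic induction using the rank-$2$ relation and the orientation sign).
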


\begin{exam}
Recall that the simple reflections $s_1,\dots,s_n$ are indexed so that our Coxeter element is $c=s_1 \cdots s_n$.  Then
$\gamma(s_1,\dots,s_n)$ contains all positive roots, so it contains all the cones $\gamma(f)$ where $f$ is a $c$-cluster.  By the previous theorem, there holds in $\mathcal{P}$:
\[
  \ds_1\cdots \ds_n=\sum_{f \in \Delta^+_{n-1}} \omega(f) \cdot \df.
\]
\end{exam}

\begin{proof}[Proof of Theorem~\ref{theo:prod}]

Let $(t_1,\dots,t_j)$ be as in the statement of the theorem. It is an ordered face of $\Delta_+$ if and only if we have $t_1\succ \dots \succ t_j$. In this case the statement is clear.

We may now assume that there exists $i$ such that $t_i\prec t_{i+1}$. Then $\Gamma(t_it_{i+1})$ is a rank $2$ reflection subgroup of $W$ with Coxeter generators $t_i$ and $t_{i+1}$, faithfully acting in the plane $P$ spanned by the roots $\rho(t_i)$ and $\rho(t_{i+1})$. By Lemma~\ref{lem:rank2}, its reflection set is $T(t_it_{i+1})=\{v_1=t_i,v_2,\dots,v_m=t_{i+1}\}$, where the corresponding roots  $\rho(v_i)$ are  linearly ordered as in Figure~\ref{fig:fan_dim_2} or its mirror image. It follows that:\begin{itemize}
    \item the cones $\gamma(v_i,v_{i-1})$ for $1 < i \leq m$ form a simplicial fan in dimension $2$, with support $\gamma(v_m,v_1)$, and
    \item the ordered bases $(v_i,v_{i-1})$ for $1 < i \leq m$ and $(v_m,v_1)$ have the same orientation.
\end{itemize} 

We now lift this to dimension $j$. From the relation \[\dt_i\dt_{i+1}=\dv_1\dv_m=-\sum_{i=1}^{m-1} \dv_i\otimes \dv_{i-1} \in R_3\] we get:
\begin{equation}
\label{eq:product_induction}
 \dt_1\cdots \dt_j=-\sum_{i=1}^{m-1} \dt_1\dots \dt_{i-1}(\dv_i\otimes \dv_{i-1}) \dt_{i+2}\cdots\dt_j.
\end{equation}

Now on the right hand side of \eqref{eq:product_induction}, note that all monomials are strictly smaller in lexicographic ordering than $\dt_1\cdots \dt_j$, so we can assume the result holds for them by induction. The partition of $X(t_1,\cdots,t_j)$ into cones is obtained by lifting the $2$-dimensional picture detailed above. The product formula then follows from the fact that each term $f'$ in the sum \eqref{eq:product_induction} has  $\omega(f')=-\omega(f)$ once again by the analysis of the $2$-dimensional case.
\end{proof}

\section{The Nichols algebra}
\label{sec:shuffle}

The goal of this section is to build on Remark~\ref{rem:unshuffle}.  Going back to the more natural framework of algebras, rather than coalgebras, this observation suggests the introduction of the {\it twisted shuffle product} that we define below.  It will lead to an alternative construction of the algebra $\mathcal{P}$, as a quotient of a {\it Nichols algebra}. This construction is an alternative way to get properties proved in Section~\ref{sec:propertiesdual} --though not the most natural way when starting from the braid monoid.  It will also connect our work with the Orlik-Solomon algebra.

This realization of the algebra $\Dual(W)$ as a quotient of $\mathcal{N}(W)$ was also obtained independently in Zhang's thesis~\cite{Zhang20}, where he uses this algebra to compute the homology of Milnor fibers.  As explained in Remarks~\ref{rem:vanishingprop} and~\ref{rem:zhang}, his results can be used to get an alternative path to koszulity of $\Alg$ and $\Dual$.

\subsection{Definitions}


The unsigned shuffle product is common in algebraic combinatorics.  The signed version used here (the product $\shuffle$ defined below) is natural in a geometric context (for example, it defines the wedge product on antisymmetric multilinear maps, or differential forms).

\begin{defi} \label{def:tsh}
 The {\it shuffle product} $\shuffle$ on $\mathcal{T}(\bk^T)$ is defined recursively on the canonical basis by 
  \begin{align*}
   (t_1\otimes \cdots \otimes t_i)  \shuffle (u_1\otimes \cdots \otimes u_j)
   &= 
   t_1 \otimes \Big( (t_2\otimes \cdots \otimes t_i) \shuffle (u_1\otimes \cdots \otimes u_j) \Big) + \\
   (-1)^i & u_1 \otimes \Big( (t_1 \otimes \cdots \otimes t_i) \shuffle (u_2 \otimes \cdots \otimes u_j) \Big).
 \end{align*}Similarly, its twisted analog $\tsh$ is defined by:
 \begin{align*}
   (t_1\otimes \cdots \otimes t_i)  \tsh (u_1\otimes \cdots \otimes u_j)
   &= 
   t_1 \otimes \Big( (t_2\otimes \cdots \otimes t_i) \tsh (u_1\otimes \cdots \otimes u_j) \Big) + \\
   (-1)^i & u_1 \otimes \Big( (t_1^{u_1} \otimes \cdots \otimes t_i^{u_1}) \tsh (u_2 \otimes \cdots \otimes u_j) \Big).
 \end{align*}
These products have the same unit as the usual product of $\mathcal{T}(\bk^T)$.
\end{defi}

For example,
\begin{align*}
  (t_1\otimes t_2) \tsh (u_1\otimes u_2) = &t_1 \otimes  t_2 \otimes u_1 \otimes u_2
   - t_1 \otimes u_1 \otimes t_2^{u_1} \otimes u_2 + \\
   & t_1 \otimes u_1 \otimes u_2 \otimes t_2^{u_1u_2}
   + u_1 \otimes t_1^{u_1} \otimes t_2^{u_1} \otimes u_2 - \\
   & u_1 \otimes t_1^{u_1} \otimes u_2 \otimes t_2^{u_1u_2} + u_1 \otimes u_2 \otimes t_1^{u_1u_2} \otimes t_2^{u_1u_2}.
\end{align*}

\begin{defi}
We denote $\mathcal{N} \subset \mathcal{T}(\bk^T)$ the $\tsh$-subalgebra generated by degree 1 elements (i.e., reflections).  We refer to $\mathcal{N}$ as the {\it Nichols algebra}.
\end{defi}

In general, there is a Nichols algebra associated to each {\it braided vector space} (a general reference on this subject is~\cite{TAK05}).  Nichols algebras are {\it braided Hopf algebras} in the sense of~\cite{MAJ94}, as the braiding intervenes in the compatibility relation between the product and the coproduct.  Only the product is relevant here.  The relevant braiding on $\bk^T$ is a linear endomorphism $\varsigma$ of $\bk^T \otimes \bk^T$ defined by:
\[
  \varsigma(t\otimes u) := - u \otimes t^u.
\]
It is straightforward to check that $\varsigma$ satisfies the Yang-Baxter equation, so that $(\bk^T,\varsigma)$ is a braided vector space.  Some properties of $\mathcal{N}$ follow from~\cite{MS00}, see below. 

Combinatorially, it should be noted that $\varsigma$ is the signed version of the {\it left Hurwitz move}, which consists in replacing a factor $t_i\otimes t_{i+1}$ with $t_{i+1} \otimes t_i^{t_{i+1}}$ in a tensor $t_1\otimes \cdots \otimes t_j$.  A key point of these moves is that the product map $t_1\otimes \cdots \otimes t_j\mapsto t_1\cdots t_j $ is invariant.  Similarly, we get a decomposition of $\mathcal{N}$ that makes it a graded algebra, where the grading takes values in $W\times\mathbb{N}$ (with the obvious monoid structure).

\begin{lemm}
  For $(w,j)\in W\times\mathbb{N}$, let
  \[
    \mathcal{N}_{(w,j)} := \operatorname{Span}_{\bk} \Big\{
    t_1 \tsh \cdots \tsh t_j \; \Big| \; t_1,\dots,t_j\in T, \; t_1 \cdots t_j = w \Big\}.
  \]
  Then we have:
  \[
    \mathcal{N} = \bigoplus_{(w,j) \in W\times\mathbb{N} } \mathcal{N}_{(w,j)}
  \]
  and
  \[ 
    \mathcal{N}_{(w,j)} \tsh \mathcal{N}_{(w',j')} \subset \mathcal{N}_{(w w' , j + j')}.
  \]    
  Moreover, if $\dim \mathcal{N}_{(w,j)}>0 $, then $\lt(w) = j-2i$ for some $i\in\mathbb{N}$.
\end{lemm}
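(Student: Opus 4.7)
The plan is to reduce all three claims to a single compatibility between $\tsh$ and the $\bk$-linear projection $\mathcal{T}(\bk^T)\to \bk W$ sending $t_1\otimes\cdots\otimes t_k$ to the product $t_1\cdots t_k$ in $W$. Concretely, I would establish the following key lemma: for pure tensors $x=t_1\otimes\cdots\otimes t_i$ and $y=u_1\otimes\cdots\otimes u_j$, every monomial appearing in the expansion of $x\tsh y$ is a pure tensor whose product in $W$ equals $(t_1\cdots t_i)(u_1\cdots u_j)$.

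I would prove this key lemma by induction on $i+j$ using the recursive definition of $\tsh$. The first summand $t_1\otimes\big((t_2\otimes\cdots\otimes t_i)\tsh y\big)$ is handled directly by the induction hypothesis. For the second summand, the crucial computation is the telescoping
\[
t_1^{u_1} t_2^{u_1}\cdots t_i^{u_1} \;=\; u_1\,t_1 t_2\cdots t_i\, u_1,
\]
which gives that each monomial inside $(t_1^{u_1}\otimes\cdots\otimes t_i^{u_1})\tsh(u_2\otimes\cdots\otimes u_j)$ has product $(u_1 t_1\cdots t_i u_1)(u_2\cdots u_j)$; prepending $u_1$ and using $u_1^2=e$ yields the desired $(t_1\cdots t_i)(u_1\cdots u_j)$.

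Part (2) is then immediate from the key lemma. For (1), note that the canonical basis $\{t_1\otimes\cdots\otimes t_j : t_k\in T\}$ of $\mathcal{T}(\bk^T)$ partitions according to length $j$ and to the product in $W$, giving a direct sum decomposition of $\mathcal{T}(\bk^T)$ indexed by $W\times\mathbb{N}$. The key lemma places $\mathcal{N}_{(w,j)}$ inside the $(w,j)$-summand, so the sum of the $\mathcal{N}_{(w,j)}$ is automatically direct. That this sum equals $\mathcal{N}$ follows from the definition of $\mathcal{N}$ together with (2) applied inductively to iterated $\tsh$-products of reflections.

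Finally, (3) follows from the sign character $\det\colon W\to\{\pm 1\}$ coming from the geometric representation. Each reflection has determinant $-1$, so any factorization $w=t_1\cdots t_j$ gives $\det(w)=(-1)^j$. If $\dim\mathcal{N}_{(w,j)}>0$ such a factorization exists, hence $j\equiv \lt(w)\pmod 2$; combined with the trivial $j\geq \lt(w)$, this forces $j=\lt(w)+2i$ for some $i\geq 0$. The only real obstacle is the key lemma above---specifically keeping track of the simultaneous conjugation by $u_1$ in the twisted recursion---but the telescoping identity does exactly the bookkeeping needed.
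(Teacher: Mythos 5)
Your proposal is correct and follows essentially the same route as the paper: the paper also reduces everything to the fact that every monomial in a twisted shuffle has the same product in $W$ as the concatenation (phrased there as "each term is obtained by applying the braiding $\varsigma$ to adjacent pairs", which your inductive telescoping argument makes explicit). The only cosmetic difference is in the parity claim, where the paper invokes the odd Coxeter length of reflections and you use $\det(t)=-1$; these are equivalent.
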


\begin{proof}
  In Definition~\ref{def:tsh}, it is easily seen that all terms in $(t_1\otimes \cdots \otimes t_i)  \tsh (u_1\otimes \cdots \otimes u_j)$ are obtained from $t_1\otimes \cdots \otimes t_i \otimes u_1\otimes \cdots \otimes u_j$ by applying $\varsigma$ on some adjacent pairs.  It follows that all terms are in the same subspace $\mathcal{N}_{(t_1\cdots t_i u_1 \cdots u_j,i+j)}$.  The existence of the grading easily follows.

  If $\dim( \mathcal{N}_{(w,j)} )>0 $, from a nonzero element we get $t_1,\dots, t_j\in T$ such that $t_1\cdots t_j =w$.  Therefore, $\ell_T(w)\leq j$ by definition of $\ell_T$.  That they have the same parity follows from the fact that each reflection has odd Coxeter length.
\end{proof}

Below, we can assume that the indices $(w,j)$ always satisfy the condition ensuring that $\dim( \mathcal{N}_{(w,j)} )>0 $, in particular $\lt(w)\leq j$.

Let us mention some interesting properties of $\mathcal{N}$ taken from Milinski and Schneider~\cite{MS00} (these won't be used in the rest of the section).

\begin{prop}[{\cite[Theorem~5.8]{MS00}}]
  For each $w\in W$, let $\mathcal{N}_w = \oplus_{j\in \mathbb{N}} \mathcal{N}_{w,j}$.  The vector spaces $\mathcal{N}_w$ have all the same dimension.
\end{prop}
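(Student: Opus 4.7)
The plan is to exhibit, for each pair $w,w' \in W$, a $\bk$-linear isomorphism $\mathcal{N}_w \xrightarrow{\sim} \mathcal{N}_{w'}$. I would proceed in two stages: first, exploit a natural $W$-action on $\mathcal{N}$ to equate dimensions within each conjugacy class; second, invoke the full braided Hopf algebra structure of $\mathcal{N}$ to bridge different conjugacy classes.

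For the first stage, I would extend the conjugation action of $W$ on $T$ to the diagonal action on $\mathcal{T}(\bk^T)$ given by $g \cdot (t_1 \otimes \cdots \otimes t_j) = (gt_1 g^{-1}) \otimes \cdots \otimes (gt_j g^{-1})$. A direct calculation verifies that it commutes with the braiding $\varsigma(t \otimes u) = -u \otimes t^u$: indeed, $\varsigma(gtg^{-1} \otimes gug^{-1}) = -(gug^{-1}) \otimes (gtg^{-1})^{gug^{-1}} = -(gug^{-1}) \otimes g(t^u)g^{-1}$, which is exactly $g \cdot \varsigma(t \otimes u)$. Consequently the $W$-action commutes with $\tsh$ and restricts to algebra automorphisms of $\mathcal{N}$. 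Applied to the generator $t_1 \tsh \cdots \tsh t_j$ of $\mathcal{N}_w$, one gets an element of $\mathcal{N}_{gwg^{-1}}$, so $g$ induces a bijection $\mathcal{N}_w \to \mathcal{N}_{gwg^{-1}}$, and $\dim \mathcal{N}_w$ depends only on the conjugacy class of $w$.

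For the second stage, I would upgrade $\mathcal{N}$ to a graded braided Hopf algebra: the coproduct $\Delta$ is uniquely determined by requiring the generators $t \in T$ to be primitive and $\Delta$ to be a morphism of braided algebras into $\mathcal{N} \,\underline{\otimes}\, \mathcal{N}$, the latter being the tensor product with the product twisted by $\varsigma$. The construction is compatible with the $W$-grading, giving $\Delta(\mathcal{N}_w) \subset \bigoplus_{w_1 w_2 = w} \mathcal{N}_{w_1} \otimes \mathcal{N}_{w_2}$, and the braided antipode $\mathcal{S}$ induces a bijection $\mathcal{N}_w \to \mathcal{N}_{w^{-1}}$. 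The defining universal property of a Nichols algebra then produces a canonical nondegenerate pairing of $\mathcal{N}$ with itself (shuffle evaluated against unshuffle) which, combined with finite-dimensionality in the Coxeter case, endows $\mathcal{N}$ with a Frobenius structure. Its top component sits in some distinguished grading $\mathcal{N}_{w_0}$, and the Frobenius pairing yields $\dim \mathcal{N}_w = \dim \mathcal{N}_{w^{-1} w_0}$. The group of operations on $W$ generated by conjugation, inversion, and the Frobenius shift $v \mapsto v^{-1} w_0$ acts transitively on $W$ in our setting, from which the equality of all $\dim \mathcal{N}_w$ follows.

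The main obstacle I anticipate lies in the second stage, and specifically in identifying the Frobenius class $w_0$ and verifying that the resulting collection of symmetries acts transitively on $W$. The conjugation and antipode symmetries alone equate dimensions only within conjugacy classes (or rather within the union of a class and its inverse), so the Frobenius shift provided by the Nichols pairing is the essential extra ingredient to cross between conjugacy classes; this is precisely what is established in \cite[Theorem~5.8]{MS00}.
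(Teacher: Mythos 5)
First, note that the paper offers no proof of this proposition: it is imported verbatim from \cite[Theorem~5.8]{MS00}, and the surrounding text explicitly says these properties are not used in the rest of the section, so a citation is the intended treatment. Your first stage is correct and is the easy part of the story: the diagonal conjugation action of $W$ on $\mathcal{T}(\bk^T)$ does commute with the braiding $\varsigma$ (your computation is right), hence with $\tsh$, and therefore restricts to linear isomorphisms $\mathcal{N}_{(w,j)}\to\mathcal{N}_{(gwg^{-1},j)}$. This shows $\dim\mathcal{N}_w$ is a class function, but nothing more, and your second stage, which must do the real work of crossing conjugacy classes, does not hold up.

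Two concrete problems. (i) Your Frobenius argument presupposes that $\mathcal{N}(W)$ is finite dimensional. This is not known: the paper points it out immediately after the proposition (``It would be very interesting to know if these algebras are indeed finite-dimensional''), and in fact the proposition is precisely the tool used there to reduce finite-dimensionality of $\mathcal{N}$ to that of $\mathcal{N}_e$; an argument assuming finite-dimensionality therefore cannot prove the statement in the generality needed. (ii) Even granting finite-dimensionality, the transitivity you invoke fails. Every nonzero $\mathcal{N}_{(w,j)}$ is spanned by products of $j$ reflections multiplying to $w$, so $\det(w)=(-1)^j$; in particular the $W$-degree $w_0$ of the one-dimensional top component satisfies $\det(w_0)=(-1)^N$ with $N$ the top tensor degree. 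Conjugation and inversion preserve $\det$, and the shift $v\mapsto v^{-1}w_0$ multiplies it by $\det(w_0)$. So whenever $N$ is even --- already for $W=\mathfrak{S}_3$, where $\mathcal{N}$ is the $12$-dimensional Fomin--Kirillov algebra with top degree $N=4$ --- your group of symmetries preserves the sign character and cannot act transitively on $W$; it never connects $e$ to a reflection. Finally, you close by citing \cite[Theorem~5.8]{MS00} for ``precisely what is established,'' but that theorem \emph{is} the statement to be proved, so as written the argument is circular. The honest options are to cite \cite{MS00} outright, as the paper does, or to reproduce their actual argument, which does not proceed via a Frobenius structure on $\mathcal{N}$.
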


This result implies that $\mathcal{N}$ is finite-dimensional if and only if $\mathcal{N}_e$ is (where $e$ is the unit of $W$).  Note that $\mathcal{N}_e$ is a subalgebra of $\mathcal{N}$.  It would be very interesting to know if these algebras are indeed finite-dimensional.

\begin{prop}[{\cite[Corollary~5.9]{MS00}}]
  For each $w\in W$, let $x_w = s_1 \tsh s_2 \tsh \cdots $ where $s_1s_2 \cdots $ is a reduced expression for $w$ (as a product of elements in $S$).  Then:
  \begin{itemize}
      \item $x_w$ does not depend on the chosen reduced expression, up to a sign;
      \item $(x_w)_{w\in W}$ is a linear basis of the subalgebra of $\mathcal{N}$ generated by $S$;
      \item we have $x_w x_{w'} = \pm x_{ww'}$ if $\ell_S(ww') = \ell_S(w) + \ell_S(w')$ (where $\ell_S$ is Coxeter length), and $0$ otherwise.
  \end{itemize}
\end{prop}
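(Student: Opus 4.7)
The plan is to reduce everything to two building-block identities in $\mathcal{N}$: the nil-relation $s \tsh s = 0$ for every $s \in S$, and the braid identity $\underbrace{s \tsh t \tsh s \tsh \cdots}_{m_{s,t}} = \pm \underbrace{t \tsh s \tsh t \tsh \cdots}_{m_{s,t}}$ for $s, t \in S$ with $m = m_{s,t}$ finite. The nil-relation is immediate from Definition~\ref{def:tsh}: since $s^s = s$, one has $s \tsh s = s \otimes s - s \otimes s^s = 0$. The braid identity only involves reflections in the rank-$2$ dihedral parabolic generated by $s$ and $t$, whose reflection set can be ordered as $u_1 = s, u_2, \ldots, u_m = t$ by Lemma~\ref{lem:rank2}. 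I would prove it by induction on $m$, using the identities $u_{i+1} u_i = u_i u_{i-1}$ to carry out the recursive expansion of $\tsh$; the alternating signs in Definition~\ref{def:tsh} should conspire with these relations so that the expansion on the left matches the expansion on the right up to an overall sign.

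Once these two identities are in hand, property (1) follows from Matsumoto's theorem: any two reduced expressions for $w$ are connected by a chain of braid moves, and each such move alters $x_w$ only by a sign via the braid identity, so $x_w$ is defined up to sign. For property (2), I would first show that any product $s_{i_1} \tsh \cdots \tsh s_{i_k}$ with $s_{i_j} \in S$ equals $\pm x_{s_{i_1} \cdots s_{i_k}}$ if the word is reduced, and $0$ otherwise. The vanishing case rests on the two building-block identities: a non-reduced word can be rewritten through braid moves into one containing an adjacent repeat $s \tsh s$, which is zero. This settles spanning. For linear independence, I would use the grading: $x_w \in \mathcal{N}_{(w, \ell_S(w))}$, so the $x_w$ lie in pairwise distinct homogeneous components, and it suffices to prove each is nonzero. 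Nonvanishing is witnessed by the pure tensor $s_1 \otimes \cdots \otimes s_k$, which appears in $s_1 \tsh \cdots \tsh s_k$ with coefficient $1$ (obtained by always choosing the first summand in the recursive unfolding of $\tsh$) and cannot be cancelled by any other term in the expansion.

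Property (3) then splits into two cases. If $\ell_S(ww') = \ell_S(w) + \ell_S(w')$, the concatenation of reduced expressions for $w$ and $w'$ is itself a reduced expression for $ww'$, hence $x_w \tsh x_{w'} = \pm x_{ww'}$ directly from the definition together with (1). Otherwise the concatenation is a non-reduced word, and the vanishing argument from (2) yields $x_w \tsh x_{w'} = 0$.

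The main obstacle is clearly the braid identity: the bookkeeping of signs and conjugations is delicate, and the inductive setup must be chosen with care. A cleaner alternative would be to exploit the braided Hopf algebra structure of $\mathcal{N}$ and reduce the length-$m$ braid relation to the quadratic relations on two generators, in the spirit of the quantum symmetrizer / Woronowicz construction standard in Nichols algebra theory; this is essentially the path taken by Milinski and Schneider in~\cite{MS00}.
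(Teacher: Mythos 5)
First, note that the paper does not prove this statement at all: it is quoted verbatim from Milinski--Schneider \cite[Corollary~5.9]{MS00} and the authors explicitly remark that it is not used in the sequel. So your attempt can only be judged on its own merits, and there its architecture is the standard (and correct) one: everything reduces to the nil-relation $s\tsh s=0$ and a rank-$2$ braid identity, after which Tits' theorem on the word problem gives (1), the vanishing half of (2) and (3), and the bigrading $x_w\in\mathcal{N}_{(w,\ell_S(w))}$ reduces linear independence to $x_w\neq 0$. Your verification of $s\tsh s=0$ is correct, and the braid identity is indeed true (e.g.\ for $m_{s,t}=3$ one computes $s\tsh t\tsh s = s\otimes t\otimes s+t\otimes s\otimes t-t\otimes sts\otimes s-s\otimes sts\otimes t = t\tsh s\tsh t$).

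The genuine gap is that this braid identity is precisely where all the content of the proposition lives, and you do not prove it: ``the alternating signs should conspire'' is a hope, not an argument, and a naive induction on $m$ using only the recursion of Definition~\ref{def:tsh} and the relations $u_{i+1}u_i=u_iu_{i-1}$ is not straightforward to set up, because the recursive expansion of an $m$-fold product of single letters produces $m!$ terms with iterated conjugations that must be matched pairwise across the two sides. The workable route is the one you relegate to a closing remark: write $t_1\tsh\cdots\tsh t_k$ as the quantum (anti)symmetrizer $\sum_{\sigma\in S_k}T_\sigma(t_1\otimes\cdots\otimes t_k)$, where $\sigma\mapsto T_\sigma$ is the Matsumoto section of the braid group action generated by $\varsigma$, and use the factorization of the symmetrizer along reduced words; this is how \cite{MS00} proceeds, and it should be the backbone of the proof rather than an ``alternative.'' A second, smaller gap: your non-vanishing argument asserts that the pure tensor $s_1\otimes\cdots\otimes s_k$ ``cannot be cancelled by any other term,'' but since the braiding conjugates letters ($\varsigma(t\otimes u)=-u\otimes t^u$), a nontrivial shuffle could a priori reproduce the same pure tensor with a minus sign (reduced words do repeat letters, as in $sts$), so you need an actual invariant -- e.g.\ tracking which original position lands first, or a lexicographic-leading-term argument in the spirit of the paper's lemma preceding Lemma~\ref{lemm:lem1} -- to rule out cancellation.
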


Note that the subalgebera in the previous proposition somewhat resembles the Nilcoxeter algebra of $W$.

\subsection{The dual algebra as a quotient of the Nichols algebra}

Our goal is to build the algebra $\mathcal{P}(W,c)$ as a quotient of $\mathcal{N}$.  We develop this point of view from scratch, i.e., independently from the results about $\mathcal{P}(W,c)$ obtained above.  We thus obtain a definition and some properties of an algebra $\mathcal{P}'(W,c)$, and only at the end of this section will be discussed the fact that it is isomorphic to $\mathcal{P}(W,c)$.

\begin{lemm}
For each Coxeter element $c$, the subspace $J_c \subset \mathcal{N}$ defined by 
\[
  J_c := \bigoplus_{(w,j)\in W\times \mathbb{N}, \; w \notin NC_j(W,c)} \mathcal{N}_{(w,j)}.
\]
is an ideal.  Moreover, it is generated by the degree 2 elements $t\tsh u$ such that $t,u\in T$ and $tu \notin NC_2(W,c)$.
\end{lemm}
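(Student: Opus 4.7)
The plan is to prove the two assertions separately: the ideal property is straightforward from the $W\times\mathbb{N}$-grading, while the degree-$2$ generation requires more work.

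For the ideal claim, I invoke the $W\times\mathbb{N}$-grading. Given $x \in \mathcal{N}_{(w,j)}$ with $w \notin NC_j$ and any homogeneous $y \in \mathcal{N}_{(w',j')}$, the product $x\tsh y$ lies in $\mathcal{N}_{(ww',j+j')}$, so I need $ww' \notin NC_{j+j'}$. If on the contrary $ww' \in NC_{j+j'}$, then $\lt(ww') = j+j'$ and $ww' \leq_T c$. Combined with $\lt(w) \leq j$ and $\lt(w') \leq j'$ (from the preceding lemma) and the subadditivity $\lt(ww') \leq \lt(w) + \lt(w')$, equality throughout forces $\lt(w) = j$, $\lt(w') = j'$, and $\lt(ww') = \lt(w) + \lt(w')$. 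The last identity characterizes $w \leq_T ww'$ in absolute order, so $w \leq_T ww' \leq_T c$ yields $w \in NC_j$, contradicting the hypothesis. A symmetric argument on the right shows $J_c$ is a two-sided ideal.

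For the generation claim, let $I$ denote the ideal generated by the degree-$2$ elements $t \tsh u$ with $tu \notin NC_2$; the inclusion $I \subset J_c$ is immediate. I would prove the reverse inclusion by induction on $j$, showing $\mathcal{N}_{(w,j)} \subset I$ whenever $w \notin NC_j$; the case $j = 2$ is by definition. For $j \geq 3$, consider a spanning monomial $x = t_1 \tsh \cdots \tsh t_j$ with $w = t_1 \cdots t_j \notin NC_j$ and exploit the associativity of $\tsh$: if some proper prefix $t_1 \cdots t_k \notin NC_k$, or some proper suffix fails to lie in $NC$, or some adjacent pair $t_i t_{i+1} \notin NC_2$, then $x \in I$ by the inductive hypothesis or by definition.

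The remaining hard case is when every proper prefix, every proper suffix, and every adjacent pair of $x$ lies in the appropriate $NC$, yet $w \notin NC_j$. Here the key tool is the following ``sum relation'' which holds exactly in $\mathcal{N}$: for any $v \in NC_2$ with $\TT(v) = \{u_1 \prec \cdots \prec u_m\}$, the identity $u_i^{u_{i-1}} = u_{i-2}$ (derived from Lemma~\ref{lem:rank2}, indices cyclic modulo $m$) yields the telescoping computation $u_1 \tsh u_m + u_m \tsh u_{m-1} + \cdots + u_2 \tsh u_1 = 0$ in $\mathcal{N}$. Applying this relation to the last pair $(t_{j-1}, t_j)$ (which forms a noncrossing pair by assumption), $x$ rewrites as a linear combination of monomials $t_1 \tsh \cdots \tsh t_{j-2} \tsh s_{j-1} \tsh s_j$ where $(s_{j-1}, s_j)$ ranges over the other pairs in $\Gamma(t_{j-1} t_j)$ with the same product. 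Each new term either vanishes outright (when $s_{j-1} = s_j$, as $s \tsh s = 0$), acquires a bad adjacent pair $(t_{j-2}, s_{j-1})$ (so lies in $I$ by the previous case), or must be further rewritten by applying the sum relation at an earlier position.

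The main obstacle is showing this rewriting procedure terminates and is guaranteed to produce a term in $I$. The natural framework is to correlate the sum-relation rewrites with Hurwitz moves on the factorization $(t_1, \ldots, t_j)$; Lemma~\ref{lemm:transitivity} ensures such moves are transitive on reduced factorizations of a given element. The combinatorial heart of the argument is to extract from the hypothesis $w \notin NC_j$ an obstruction showing that no sequence of Hurwitz-type local rearrangements of $(t_1, \ldots, t_j)$ can avoid producing a bad adjacent pair or a repeated adjacent reflection, which then terminates the induction.
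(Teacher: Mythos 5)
Your proof of the ideal property is correct and coincides with the paper's argument: the chain $j+j'=\lt(ww')\le\lt(w)+\lt(w')\le j+j'$ forces $w\le_T ww'\le_T c$ and $w'\le_T ww'\le_T c$ whenever $ww'\in NC_{j+j'}$, and the contrapositive gives stability of $J_c$. The second half, however, is not a proof: you reduce correctly to a monomial $t_1\tsh\cdots\tsh t_j$ whose proper consecutive subproducts all lie in $NC$ while $t_1\cdots t_j\notin NC_j$, you propose to finish by rewriting with the telescoping relations $\sum_i u_i\tsh u_{i-1}=0$, and you then state explicitly that you cannot show the procedure terminates inside the degree-$2$-generated ideal $I$. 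That unproved termination claim is precisely the content of the statement in the remaining case, so this is a genuine gap, not a detail.

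The gap is real and your proposed monovariant cannot close it. In type $A_3$ with $c=(1,2,3,4)$ and the lexicographic reflection ordering, take $x=(2,3)\tsh(1,3)\tsh(1,2)$: it is $\prec$-decreasing, both adjacent products equal $(1,2,3)\in NC_2$, both proper consecutive subproducts lie in $NC_2$, yet $(2,3)(1,3)(1,2)=(1,3)\notin NC_3$ because the non-adjacent pair satisfies $(2,3)(1,2)=(1,3,2)\notin NC_2$. So no bad adjacent pair is available, and since $x$ is already decreasing, any application of the sum relation produces lexicographically \emph{smaller} monomials; the order you invoke in the easy direction cannot serve as a termination measure here. (One can check by hand that $x\in I$, but only after a rewrite that goes downward in the lexicographic order.) The paper's route is different: it proves the lattice-theoretic lemma that pairwise distinct reflections $t_1,\dots,t_k$ with $t_it_{i'}\in NC_2$ for all $i<i'$ have $t_1\cdots t_k\in NC_k$ equal to the join $t_1\vee\cdots\vee t_k$ in $NC$ (induction on $k$ using $t_i\le_T t_1c$ and the lattice property of $NC$), and deduces from the contrapositive that every spanning monomial of $J_c$ contains a bad pair $t_it_{i'}$, $i<i'$ --- which, when $(i,i')\ne(1,k)$, isolates a shorter consecutive block in $J_c$ and allows induction. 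This join lemma is the ingredient absent from your proposal; without it (or, alternatively, the a posteriori dimension count $\dim(\mathcal{N}/I)\le\#\Delta^+\le\dim(\mathcal{N}/J_c)$ available from the later sections), your argument does not establish that $J_c$ is generated in degree $2$.
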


\begin{proof}
  As $J_c$ is the sum of a subset of the homogeneous components, being an ideal amounts to a stability property of the bigrading. 
 
  Let $(w,j), (w',j') \in W \times \mathbb{N}$, such that $\ell_T(w)\leq j$ and $\ell_T(w')\leq j'$.  If $ww'\in NC_{j+j'}$, we have
  \[
    j+j'=\ell_T(ww') \leq \ell_T(w) + \ell_T(w') \leq j+j',
  \] 
  and it follows that $\ell_T(w) = j$ and $\ell_T(w') = j'$.  It also follows that $w$ and $w'$ are below $ww'$ in the absolute order, so $w\in NC_j$ and $w'\in NC_{j'}$.  By contraposition, $w\notin NC_j$ or $w'\notin NC_{j'}$ implies $ww' \notin NC_{j+j'}$.  This property of the bigrading shows that $J_c$ is an ideal.
 
  The fact that $J_c$ is generated by its degree 2 elements follows from the contraposition of the combinatorial property: if $t_1,\dots, t_k \in T$ are pairwise distinct and such that $t_it_j \in NC_2(W,c)$ for all $i<j$, then $t_1 \cdots t_k \in NC_k(W,c)$ and this element is the meet of $t_1,\dots, t_k$ in $NC(W,c)$.  Assume by induction that this holds for $k-1$ (the case $k=2$ is clear).  We thus have $t_2\cdots t_k \in NC_{k-1}(W,c)$ by induction hypothesis.  If $i\geq 2$, from $t_1t_i \in NC_2(W,c)$ and $t_1\neq t_i$, we get $t_i \leq t_1 c$.  As $t_2\cdots t_k$ is the meet of $t_2, \dots ,t_k$ in $NC(W,c)$, we also have $t_2 \cdots t_k \leq t_1 c$.  Eventually, 
  \[
   t_1 \vee \dots \vee t_k 
   =
   t_1 \vee ( t_2 \vee \dots \vee t_k )
   =
   t_1 \vee ( t_2 \cdots t_k )
  \]
  and this is easily seen to be $t_1 \cdots t_k$.
\end{proof}

\begin{defi}
  We define the algebra $\mathcal{P}'(W,c)$ as the quotient $\mathcal{N} / J_c$.  We also define $\mathcal{P}'_w(W,c) \subset \mathcal{P}'(W,c)$ for $w\in NC(W,c)$ as the quotient of $\mathcal{N}_{w,\lt(w)}$ by its intersection with $J_c$ (as a vector space).
\end{defi}

It was mentioned above that $J_c$ is an homogeneous ideal of $\mathcal{N}$.  It follows that the quotient inherits from the grading $\mathcal{N}$, and we immediately get:
\[
  \mathcal{P}'(W,c)
  =
  \bigoplus_{w\in NC(W,c)}
  \mathcal{P}'_w(W,c)
\]
Moreover this is a grading in the sense of Proposition~\ref{prop:refinedgrading}.

Note that in the previous definition, $c$ is not assumed to be a standard Coxeter element, it can be any Coxeter element.  By considering a map $\gamma_w: T\to T$ defined by $\gamma_w(x) = w x w^{-1}$ (and its extension to $\bk^T$), we easily see that $( \gamma_w \otimes \gamma_w ) \circ \varsigma = \varsigma \circ (\gamma_w\otimes \gamma_w)$, and consequently $\gamma_w \circ \tsh = \tsh \circ (\gamma_w\otimes \gamma_w)$, so that $\gamma_w$ can be extended to an automorphism of $\mathcal{N}$ that sends $J_c$ to $J_{\gamma_w(c)}$.  Therefore, we can assume that $c$ is a standard Coxeter element, without loss of generality.  Accordingly, we can use the $c$-compatible reflection ordering on $T$, as in the definition of $\Delta^+(W,c)$.

\begin{lemm}
 If $f=\{t_1\succ \dots \succ t_j\} \in \Delta^+_{j-1}$, we denote $f_{\otimes}  := t_1 \otimes \cdots \otimes t_j$ and $f_{\tsh} = t_1 \tsh \cdots \tsh t_j$.
 Then $f_{\tsh} - f_{\otimes}$ is a linear combination of tensors $f'_{\otimes}$ where $f' \in \Delta^+_{j-1}$ is strictly smaller than $f$ in the lexicographic order.
\end{lemm}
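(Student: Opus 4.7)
Plan: I would prove this by induction on $j$, using the associativity of $\tsh$ and the explicit recursive formula for a shuffle of a letter with a word.

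\emph{Base case $j=1$.} There, $f_\tsh = t_1 = f_\otimes$, so the difference is zero and the empty sum suffices.

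\emph{Inductive step.} Let $g = \{t_2 \succ \cdots \succ t_j\} \in \Delta^+_{j-2}$, a face because simplicial complexes are closed under taking subfaces. Associativity of $\tsh$ gives $f_\tsh = t_1 \tsh g_\tsh$, and the inductive hypothesis provides $g_\tsh = g_\otimes + \sum_{g'<g}\lambda_{g'} g'_\otimes$. Thus
\begin{equation*}
 f_\tsh = (t_1 \tsh g_\otimes) + \sum_{g' < g} \lambda_{g'} (t_1 \tsh g'_\otimes).
\end{equation*}
An immediate induction on word length in Definition~\ref{def:tsh} yields the explicit expansion
\begin{equation*}
 t \tsh (v_1 \otimes \cdots \otimes v_{k}) \;=\; \sum_{\ell=0}^{k} (-1)^{\ell}\, v_1 \otimes \cdots \otimes v_\ell \otimes t^{v_1 \cdots v_\ell} \otimes v_{\ell+1} \otimes \cdots \otimes v_{k}.
\end{equation*}
Applied to $t_1 \tsh g_\otimes$, the $\ell=0$ summand equals $f_\otimes$, while each $\ell\geq 1$ summand begins with $t_2\prec t_1$, hence is strictly lex-smaller than $f_\otimes$. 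Applied to each $t_1 \tsh g'_\otimes$ with $g'<g$, the $\ell=0$ summand is $t_1 \otimes g'_\otimes$, whose trailing part $g'_\otimes$ is lex-smaller than $g_\otimes$; the $\ell\geq 1$ summands again begin with an entry $\prec t_1$.

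The heart of the argument is to identify each non-leading summand with $\pm f'_\otimes$ for an appropriate face $f' \in \Delta^+_{j-1}$ strictly smaller than $f$ in lex. For the $\ell$-th summand of $t_1 \tsh g_\otimes$, the candidate set is obtained from $f$ by a single Hurwicz move: one replaces $t_{\ell+1}$ by its $t_1^{t_2 \cdots t_{\ell+1}}$-conjugate and cyclically rearranges, yielding a subset of $T$ whose product (in the order written) telescopes to $nc(f) \in NC_j$. By Lemma~\ref{lin_indep} the associated roots remain linearly independent, so this is again a reduced factorization. One then uses standard properties of $c$-compatible reflection orderings from Appendix~\ref{sec:clustercomplex} — in particular, that Hurwicz moves preserve membership in $\Delta^+$ and interact with $\prec$ in the expected way within each rank-$2$ parabolic (Lemma~\ref{lem:rank2}) — to check that this set is indeed a face and that the written ordering agrees with its decreasing order under $\prec$. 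An analogous analysis treats the summands of each $t_1 \tsh g'_\otimes$.

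The main obstacle is precisely this last combinatorial verification: ensuring that every tensor appearing in the expansion is genuinely $\pm f'_\otimes$ for a face $f'$ with $f' < f$, rather than some other tuple of reflections with the correct product. This is where the $c$-compatibility of $\prec$ does essential work, via the fact that consecutive decreasing factorizations of a rank-$2$ Coxeter element are exactly what corresponds to faces of $\Delta^+$ restricted to the corresponding parabolic. Once this is in place, gathering all summands completes the induction.
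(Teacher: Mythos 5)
Your recursive expansion of $t \tsh (v_1 \otimes \cdots \otimes v_k)$ and the lexicographic bookkeeping built on it are correct, and up to that point your argument coincides with the paper's. The fatal problem is the step you yourself single out as ``the heart of the argument'': identifying every non-leading summand with $\pm f'_{\otimes}$ for a face $f' \in \Delta^+_{j-1}$. This is not a routine verification to be outsourced to properties of $c$-compatible orderings --- it is false. Hurwicz moves do not preserve membership in $\Delta^+$, and the tuples produced by the expansion need not be decreasing. Already for $j=2$: with $\TT(w)=\{u_1\prec\dots\prec u_m\}$ as in Lemma~\ref{lem:rank2} and $f=\{u_2\succ u_1\}$, one computes $f_{\tsh}-f_{\otimes} = -\,u_1\otimes u_2^{u_1} = -\,u_1\otimes u_m$, an \emph{increasing} word, hence not $f'_{\otimes}$ for any face; and for $m\geq 3$ the set $\{u_1,u_m\}$ is not even a face, since $\langle\rho(u_1)|\rho(u_m)\rangle<0$ by Lemma~\ref{lemm:reforder} (in $\mathfrak{S}_3$ with the order of Example~\ref{exam:typeA3} this is the non-face $\{(12),(23)\}$). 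Consequently your induction hypothesis --- that $g_{\tsh}-g_{\otimes}$ is a combination of $g'_{\otimes}$ with $g'$ a face --- is also unavailable.

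The resolution is that the strong form is not needed: the lemma as printed overstates what is true, the paper's own proof establishes only the lexicographic bound, and that bound is all that is used downstream. For the unitriangularity in Lemma~\ref{lemm:lem1} one only needs that every canonical basis tensor of $\mathcal{T}(\bk^T)$ occurring in $f_{\tsh}-f_{\otimes}$ is lexicographically smaller than $f_{\otimes}$; then $f^*_{\otimes}(f'_{\tsh})$ vanishes unless $f\leq f'$ and equals $1$ when $f=f'$, which is exactly what invertibility requires. If you weaken your induction hypothesis to ``$g_{\tsh}-g_{\otimes}$ is a combination of tensors lexicographically smaller than $g_{\otimes}$'', your analysis of the $\ell=0$ and $\ell\geq 1$ summands goes through verbatim and the proof closes; simply delete the final paragraph rather than trying to complete it.
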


\begin{proof}
 Write $f_{\tsh} = t_1 \tsh (t_2 \tsh \cdots \tsh t_j)$.  Using the definition of $\tsh$, we easily see by induction on $j$ that the tensors appearing in $f_{\tsh}-f_{\otimes}$ have the form $(t_1\otimes t_2 \otimes \cdots \otimes t_i) \otimes t_\ell$ with $0\leq i <n-1$ and $i+1<\ell$.  Since $t_{i+1}\succ t_\ell $, these are lexicographically smaller than $f_{\otimes}$.
\end{proof}

For $f\in\Delta^+$, let $f_{\otimes}^* \in \mathcal{N}^*$ denote the map defined as taking the coefficient of $f_{\otimes}$ (in the expansion with respect to the canonical basis of $\mathcal{T}(\bk^T)$).  This map vanishes on $J_c$, so it is also well-defined on the quotient $\mathcal{P}'$.  We keep the same notation for this quotient map.

\begin{lemm} \label{lemm:lem1}
We have:
\begin{itemize}
    \item the elements $(f_{\tsh})_{f\in\Delta^+}$ are linearly independent in $\mathcal{P}'(W,c)$,
    \item the elements $(f_{\otimes}^*)_{f\in\Delta^+}$ are linearly independent in $\mathcal{P}'(W,c)^*$.
\end{itemize}
In particular, $\dim \mathcal{P}'(W,c) \geq \# (\Delta^+(W,c))$.
\end{lemm}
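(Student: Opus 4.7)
My plan is to leverage the unitriangular relationship between the elements $f_\tsh$ and the canonical tensors $f_\otimes$ supplied by the preceding lemma, via a pair of dual-basis arguments.

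The first ingredient is that each linear form $f_\otimes^*$, a priori defined on $\mathcal{T}(\bk^T)$, descends to a well-defined element of $\mathcal{P}'^*$, as already asserted before the lemma. This is because any $\tsh$-product $t_1 \tsh \cdots \tsh t_j$ expands as a sum of canonical-basis tensors $u_1 \otimes \cdots \otimes u_j$ obtained by iterated application of $\varsigma$ on adjacent pairs, all having the same product $u_1 \cdots u_j = t_1 \cdots t_j$ in $W$; so every canonical-basis monomial appearing in an element of $\mathcal{N}_{(w,j)}$ has product $w$. Since $\nc(f) \in NC$ by definition of $\Delta^+$, the tensor $f_\otimes$ cannot appear in any summand $\mathcal{N}_{(w,j)}$ of $J_c = \bigoplus_{w\notin NC_j} \mathcal{N}_{(w,j)}$, so $f_\otimes^*$ vanishes on $J_c$.

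The preceding lemma rewrites, for each $f \in \Delta^+_{j-1}$,
\[
  f_\tsh \;=\; f_\otimes + \sum_{\substack{f' \in \Delta^+_{j-1}\\ f' < f}} c_{f,f'}\, f'_\otimes
\]
in $\mathcal{T}(\bk^T)$, where $<$ denotes the lexicographic order induced by $\prec$. Consequently $g_\otimes^*(f_\tsh)$ equals $1$ when $g = f$, equals $c_{f,g}$ when $g < f$, and vanishes when $g > f$. Since $\mathcal{P}'$ inherits the tensor-degree grading from $\mathcal{N}$, it suffices to work in a single degree $j$, where the above pairing yields a finite square matrix that is lower-unitriangular for the lex order.

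Both statements then follow by the standard minimal-counterexample trick. If $\sum_f \lambda_f f_\tsh$ vanishes in $\mathcal{P}'$ (equivalently, lies in $J_c$ inside $\mathcal{N}$), applying $g_\otimes^*$ gives $\lambda_g + \sum_{f > g}\lambda_f c_{f,g} = 0$ for every $g$, and choosing $g$ lex-maximal among faces of the given degree with $\lambda_g \neq 0$ forces $\lambda_g = 0$, a contradiction. Dually, if $\sum_f \lambda_f f_\otimes^* = 0$ in $\mathcal{P}'^*$, evaluating at the class of $g_\tsh$ yields $\lambda_g + \sum_{f < g}\lambda_f c_{g,f} = 0$, and taking $g$ lex-minimal provides the same contradiction. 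The dimension estimate $\dim \mathcal{P}' \geq \#\Delta^+$ is then immediate from the first statement. The only genuinely nontrivial input is the Hurwitz-invariance that makes $f_\otimes^*$ descend to $\mathcal{P}'^*$; once that is in hand, everything reduces to unitriangular linear algebra, so I do not anticipate any substantial obstacle.
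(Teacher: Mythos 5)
Your proposal is correct and follows essentially the same route as the paper: the preceding lemma makes the pairing matrix $\bigl(g_{\otimes}^*(f_{\tsh})\bigr)_{f,g\in\Delta^+}$ unitriangular for the lexicographic order, which immediately yields both independence statements. The extra details you supply (why $f_{\otimes}^*$ vanishes on $J_c$, and the degree-by-degree reduction) are exactly the points the paper handles in the paragraph preceding the lemma and leaves implicit in its one-line proof.
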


\begin{proof}
It follows from the previous lemma that the matrix with entries $f^*_{\otimes}(f'_{\tsh})$ for $f,f' \in \Delta^+(W,c)$ is unitriangular, hence invertible.
\end{proof}

\begin{prop} \label{prop:nicholsPbasis}
  The family $(f_{\tsh})_{f \in \Delta^+(W,c)}$ is a $\bk$-linear basis of $\mathcal{P}'(W,c)$.  In particular, $\dim \mathcal{P}'(W,c) = \# \Delta^+(W,c) $.  
\end{prop}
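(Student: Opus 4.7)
The plan is to combine the linear independence already given by Lemma~\ref{lemm:lem1} with a spanning argument. The lemma yields $\dim \mathcal{P}'(W,c) \geq \#\Delta^+(W,c)$, so it remains to show that the family $(f_{\tsh})_{f \in \Delta^+(W,c)}$ spans $\mathcal{P}'(W,c)$. As $\mathcal{N}$ is generated by $T$ under $\tsh$, every element of $\mathcal{P}'(W,c)$ is a combination of images of $\tsh$-monomials $t_1 \tsh \cdots \tsh t_j$. Those with $t_1\cdots t_j \notin NC_j$ lie in $J_c$ and vanish in the quotient, so we may restrict to the case $t_1\cdots t_j \in NC_j$. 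The strategy, parallel to the proof of Theorem~\ref{theo:basis}, is to rewrite any such monomial as a combination of decreasing ones by iterating a rank 2 identity inside $\mathcal{N}$.

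The core computation is that identity. Let $\{u_1\prec\cdots\prec u_m\}$ be the reflections of a rank 2 parabolic, indexed as in Lemma~\ref{lem:rank2} (the $c$-compatibility of $\prec$ ensures the ordering is of this form). Expanding Definition~\ref{def:tsh} gives
\[
u_i \tsh u_{i-1} = u_i \otimes u_{i-1} - u_{i-1} \otimes u_i^{u_{i-1}},
\]
and the braid relation $u_{i+1}u_i = u_i u_{i-1}$ from Lemma~\ref{lem:rank2} implies $u_i^{u_{i-1}} = u_{i-2}$ (indices mod $m$). Summing over $i=1,\dots,m$ causes the right-hand side to telescope, yielding $\sum_{i=1}^m u_i\tsh u_{i-1} = 0$ in $\mathcal{N}$. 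In particular $u_1\tsh u_m = -\sum_{k=2}^{m} u_k\tsh u_{k-1}$, expressing the unique ascending $\tsh$-product in this bidegree as a linear combination of descending ones.

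For the spanning, suppose $(t_1,\dots,t_j)$ is not decreasing and choose $i$ with $t_i \prec t_{i+1}$. Since $t_1\cdots t_j \in NC_j$, the consecutive sub-product $t_i t_{i+1}$ is itself in $NC_2$, by the standard sub-factorization stability of reduced $T$-factorizations of $NC$-elements (a consequence of the Garside lattice structure of $\Dbm(W)$, implicitly used in the proof of Theorem~\ref{theo:basis}). The defining property of a $c$-compatible reflection ordering then forces $(t_i,t_{i+1}) = (u_1, u_m)$ in the indexing of $\TT(t_it_{i+1})$, as $u_1u_m$ is the only ascending factorization of the rank 2 Coxeter element $t_it_{i+1}$. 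Substituting the rank 2 identity at positions $(i,i+1)$ replaces our monomial by a linear combination of monomials whose reflection sequence is strictly larger in the lexicographic order (since $u_k \succ u_1$ for $k\geq 2$). As lex is a well-order on the finite set of reduced $T$-factorizations of a fixed $w\in NC$, this rewriting terminates, and the non-rewritable monomials are exactly the $f_{\tsh}$ with $f \in \Delta^+(W,c)$. This gives $\dim \mathcal{P}'(W,c) \leq \#\Delta^+(W,c)$ and hence the basis statement; the main obstacle is justifying the sub-factorization stability $t_it_{i+1} \in NC_2$, after which the telescoping identity and the reflection-ordering combinatorics deliver the conclusion.
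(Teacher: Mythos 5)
Your proposal is correct and follows essentially the same route as the paper: reduce to spanning via Lemma~\ref{lemm:lem1}, establish the telescoping identity $\sum_{i=1}^m u_i \tsh u_{i-1} = 0$ in $\mathcal{N}$ using $u_i^{u_{i-1}} = u_{i-2}$ from Lemma~\ref{lem:rank2}, and then invoke the lexicographic rewriting argument of Theorem~\ref{theo:basis}. You simply spell out details (the vanishing of non-$NC$ monomials via $J_c$, the subword-property justification that $t_it_{i+1}\in NC_2$, and termination of the rewriting) that the paper delegates to its earlier proof.
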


\begin{proof}
  Using the homogeneous decomposition, it suffices to show that the elements $f\in \Delta^+(W,c)$ with $\nc(f) = w$ form a basis of $\mathcal{P}'_w(W,c)$.  By the previous lemma, it suffices to show that these elements are a generating family.  
  
  Let us first show that Relations~\eqref{it:rel3} in Theorem~\ref{theo:dual_presentation} also hold in $\mathcal{P}'(W,c)$.  If $u_1,\dots,u_m$ are defined as in Lemma~\ref{lem:rank2}, we have $u_i\tsh u_{i-1} = u_i\otimes u_{i-1} - u_{i-1}\otimes u_{i-2}$ (taking indices modulo $m$), it follows: $\sum_{i=1}^m u_i \tsh u_{i-1} = 0$.  Now, we can use the argument in Proposition~\ref{theo:basis}: these relations permit to rewrite each monomial as a combination of decreasing monomials.
\end{proof}

\begin{rema}
  It would be very interesting to find a presentation of $\mathcal{N}$, or at least to know if it is quadratic.  If it is the case, $\mathcal{P}'(W,c)$ is also a quadratic algebra (as we have shown that $J_c$ is a quadratic ideal).  As such, the basis $(f_{\tsh})_{f\in \Delta^+(W,c)}$ guarantees that $\mathcal{P}'(W,c)$ is a Koszul algebra, as it is a Poincaré-Birkhoff-Witt basis (see~\cite[Section~5]{Priddy}).  We would also get the presentation $\mathcal{P}'(W,c)$ as in Theorem~\ref{theo:dual_presentation}.  
\end{rema}

\begin{theo} \label{theo_iso_nicholsquotient}
  There is a well-defined isomorphism $\Psi$ from $\mathcal{P}(W,c)$ to $\mathcal{P}'(W,c)$ such that for all $t_1,\dots,t_j\in T$, we have:
  \begin{equation} \label{eq_iso_shuffle1}
    \Psi( \dt_1 \cdots \dt_j ) = t_1 \tsh \cdots \tsh t_j.
  \end{equation}
\end{theo}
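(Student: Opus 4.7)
The plan is to define $\Psi$ at the level of the tensor algebra and then check it descends to both the relations of $\mathcal{P}(W,c)$ and the ideal $J_c$. Concretely, consider the algebra map
\[
  \tilde{\Psi} : \mathcal{T}(\bk^{\dT}) \lra \mathcal{N}, \qquad \dt \longmapsto t,
\]
where the source carries the tensor product and the target carries $\tsh$. By freeness of the tensor algebra this is well-defined, and one has $\tilde{\Psi}(\dt_1\cdots\dt_j)=t_1\tsh\cdots\tsh t_j$. The goal is to show that $\tilde{\Psi}$ sends the three families of quadratic relations from Theorem~\ref{theo:dual_presentation} into $J_c$, which will produce a well-defined map $\Psi:\mathcal{P}(W,c)\to \mathcal{P}'(W,c)$ at the quotient level.

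For the verification of the three relations, I would proceed as follows. For relation~\eqref{it:rel1}, a direct unfolding of Definition~\ref{def:tsh} gives $t\tsh t = t\otimes t - t\otimes t^t = 0$ in $\mathcal{N}$ since $t^t=t$. For relation~\eqref{it:rel2}, one computes $t\tsh u = t\otimes u - u\otimes t^u$, which lies in the bihomogeneous component $\mathcal{N}_{(tu,2)}$; the hypothesis $tu\not\leq_T c$ means $tu\notin NC_2(W,c)$, so this whole component lies in $J_c$ by definition. For relation~\eqref{it:rel3}, the sum $\sum_{i=1}^m u_i\tsh u_{i-1}$ was already shown to be zero in $\mathcal{N}$ in the proof of Proposition~\ref{prop:nicholsPbasis}, using the rank-2 identity $u_i^{u_{i-1}}=u_{i-2}$ coming from Lemma~\ref{lem:rank2}, which produces a telescoping sum.

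Having established that $\Psi$ is a well-defined algebra homomorphism satisfying \eqref{eq_iso_shuffle1}, it remains to prove it is bijective. Here the basis computations from both sides do all the work: Theorem~\ref{theo:basis} gives the linear basis $(\df)_{f\in\Delta^+(W,c)}$ of $\mathcal{P}(W,c)$, while Proposition~\ref{prop:nicholsPbasis} gives the linear basis $(f_{\tsh})_{f\in\Delta^+(W,c)}$ of $\mathcal{P}'(W,c)$. By construction $\Psi(\df)=f_{\tsh}$ for every $f\in\Delta^+(W,c)$, so $\Psi$ maps a basis to a basis and is therefore an isomorphism.

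The main obstacle is really just the verification of relation~\eqref{it:rel2}, because one must check that the bigrading $(w,j)$ on $\mathcal{N}$ is respected in the right way so that the two-term expression $t\otimes u - u\otimes t^u$ genuinely sits inside $\mathcal{N}_{(tu,2)}\subset J_c$; this is immediate from the grading lemma but is the one point that relies on the algebraic structure of the Nichols side rather than a purely combinatorial cancellation. The other two relations reduce to formal computations, and the bijectivity step is then purely a matter of comparing two already-constructed bases of the same cardinality.
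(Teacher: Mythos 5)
Your proposal is correct and follows essentially the same route as the paper: the same verification that the three families of relations from Theorem~\ref{theo:dual_presentation} hold in $\mathcal{P}'(W,c)$ (with relation~\eqref{it:rel3} delegated to the computation in Proposition~\ref{prop:nicholsPbasis}), and bijectivity deduced from the two basis results, Theorem~\ref{theo:basis} and Proposition~\ref{prop:nicholsPbasis}. The only cosmetic difference is that you phrase the last step as ``a basis is sent to a basis'' where the paper says ``surjective plus equal dimensions''; these are the same argument.
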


\begin{proof}
  The generating sets $T$ of $\mathcal{P}'(W,c)$ and $\dT$ of $\Dual$ are in bijection.  We have already seen in the proof of Proposition~\ref{prop:nicholsPbasis} that Relations~\eqref{it:rel3} in Theorem~\ref{theo:dual_presentation} hold in $\mathcal{P}'(W,c)$.  Relations~\eqref{it:rel1} and \ref{it:rel2} hold as well, since:
  \begin{itemize}
    \item if $t\in T$, $t\tsh t = t\otimes t - t\otimes t = 0$;
    \item if $t,u\in T$ are such that $tu\not \leq_T c$, we have $t\tsh u \in \mathcal{N}_{(tu,2)} \subset J_c$ so $t\tsh u =0$ in $\mathcal{N}/J_c$.
  \end{itemize}
  It follows that the map $\Psi$ is well-defined and surjective.  By the previous lemma, the two algebras have the same dimension, and consequently the map is an isomorphism. 
\end{proof}

Note that the previous proof uses the fact that $\dim \mathcal{P}(W,c) = \# \Delta^+(W,c)$, so it relies on the results from Section~\ref{sec:koszulity} via the vanishing property (Proposition~\ref{prop:vanishingprop}.

\subsection{A parallel with the Orlik-Solomon algebra}  
Some properties of $\Dual$ obtained in Section~\ref{sec:propertiesdual} presents some similarity with the algebra $\mathcal{OS}=\mathcal{OS}(W)$ from~\cite{OS80}, as noted by Zhang~\cite{Zhang20}.  Let us recall its definition.

For each $t\in T$, let $H_t:=\ker(t-I) \subset \mathbb{R}^n$, and
\[
  M(W) := \mathbb{C}^n \backslash \big( \bigcup_{t\in T} H_t\otimes\mathbb{C} \big).
\]
This $M(W)$ is a classifying space for the pure braid group  $\bB(W) / W$, by a result of Deligne conjectured by Brieskorn (for example, see~\cite{Bes15} and references therein).

For each $t\in T$, let $\alpha_t \in (\mathbb{C}^n)^*$ such that $\ker (\alpha_t) = H_t \otimes \mathbb{C}$, and $\omega_t = \frac{\dif \alpha_t}{2i\pi \alpha_t}$.  The algebra $\mathcal{OS}(W)$ can be defined over $\mathbb{Z}$ as the ring of differential forms generated by the 1-forms $(\omega_t)_{t\in T}$, together with the constant $0$-form 1 as a unit.  It is isomorphic to the cohomology ring $H^*(M(W),\mathbb{Z})$ via the map sending a closed differential form to its de Rham cohomology class.  This algebra can also be seen as the cohomology ring of the pure braid group, since $M(W)$ is a classifying space.

\begin{defi} 
Let 
\[
L(W) := \Big\{  \bigcap_{t\in T'} H_t \; \Big| \;  T'\subset T \Big\}.
\]
Endowed with reverse inclusion, it is a geometric lattice called the {\it intersection lattice} of $W$.
\end{defi}

For $x\in L$, let $\mathcal{OS}_x$ be the subspace of $\mathcal{OS}$ linearly generated by $\omega_{t_1}\wedge \cdots \wedge \omega_{t_i}$ where $t_1,\dots,t_i\in T$ are such that $H_{t_1} \cap \cdots \cap H_{t_i} = x$.  Then $\mathcal{OS} = \bigoplus_{x\in L} \mathcal{OS}_x$, and this decomposition is compatible with the product in the sense that $\mathcal{OS}_x \wedge \mathcal{OS}_{x'} \subset \mathcal{OS}_{x\cap x'}$.  Moreover, $\dim( \mathcal{OS}_x ) = \mu_L(x)$ ($\mu_L$ is the Möbius function of $L(W)$).  We refer to~\cite[Section~2]{OS80} for all these properties.
The decomposition of $\Dual$ in Proposition~\ref{prop:refinedgrading}, together with Corollary~\ref{coro:dim_mu}, thus present a striking similarity with the results about $\mathcal{OS}$ just mentioned: we just replace the intersection lattice $L(W)$ and its Möbius function with the noncrossing partition lattice $NC(W)$ and its Möbius function. \smallskip

Our goal here is to show that $\mathcal{OS}$ is also a quotient of $\mathcal{N}$, just like $\Dual$.  We use a definition of $\mathcal{OS}$ in terms of shuffles taken from~\cite[Section~3]{OS80}.  In particular, this algebra will be defined over $\bk$ to be consistent with other algebras considered in this work.  
Recall that that we have $\bigwedge (\bk^T) \subset \mathcal{T}(\bk^T)$, by seeing the exterior algebra as the space of antisymmetric tensors.  Moreover, the wedge product on the exterior algebra identifies with $\shuffle$.  Define a map $\lambda : \mathcal{T}(\bk^T) \to \mathcal{T}(\bk^{L\backslash\{0\}}) $ by:
\[
   \lambda (t_1 \otimes \cdots \otimes t_j)
   =
   \begin{cases}
     H_{t_1} \otimes \big(H_{t_1} \cap H_{t_2}\big) \otimes \cdots \otimes \big(\bigcap\limits_{i=1}^j H_{t_i}\big)
     & \text{ if } \bigcap\limits_{i=1}^j H_{t_i} \neq \{0\}, \\
     0 & \text{ otherwise.}
   \end{cases}
\]
Following~\cite{OS80}, $\mathcal{OS}$ can be defined as the quotient $\bigwedge (\bk^T)/I$ where $I$ is the ideal 
\[
   I := \bigwedge (\bk^T) \cap \lambda^{-1}(0).
\]
Alternatively, $\mathcal{OS}$ is identified with $\lambda(\bigwedge (\bk^T))$, with a product defined by $\lambda(u) \cdot \lambda(v) = \lambda( u \shuffle v )$.  The fact that this is well-defined relies on the identity $\lambda(u\shuffle v) = \lambda( \lambda(u) \shuffle \lambda(v) )$, see~\cite{OS80} for details.  To avoid confusion, we keep the notation $\omega_t$ for the generators of $\mathcal{OS}$, and the product is denoted $\wedge$ as for differential forms.

\begin{theo}  \label{theo:nichols_os}
There is a well-defined surjective map $\mathcal{N} \to \mathcal{OS}$ defined by $t\mapsto \omega_t$.  In particular, $\mathcal{OS}$ is a quotient of $\mathcal{N}$.
\end{theo}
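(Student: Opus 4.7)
The plan is to realize the desired surjection as the restriction to $\mathcal{N}$ of the linear map $\lambda\colon \mathcal{T}(\bk^T)\to\mathcal{T}(\bk^{L\setminus\{0\}})$ already introduced in the description $\mathcal{OS}=\lambda(\bigwedge(\bk^T))$, with product $\lambda(u)\wedge\lambda(v)=\lambda(u\shuffle v)$. Since $\lambda(t)=H_t=\omega_t$ on degree-$1$ generators, the theorem reduces to the key identity
\begin{equation} \label{eq:keyid_os}
\lambda(u \tsh v)=\lambda(u\shuffle v) \qquad\text{for all } u,v\in\mathcal{T}(\bk^T).
\end{equation}
Granting \eqref{eq:keyid_os}, applying it inductively to $\tsh$-monomials gives
$\lambda(t_1\tsh\cdots\tsh t_j)=\lambda(t_1\shuffle\cdots\shuffle t_j)=\omega_{t_1}\wedge\cdots\wedge\omega_{t_j}$, hence both $\lambda(\mathcal{N})\subseteq\mathcal{OS}$ and the multiplicativity of $\lambda|_{\mathcal{N}}$ from $(\mathcal{N},\tsh)$ to $(\mathcal{OS},\wedge)$ follow at once. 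Surjectivity is then clear since $\mathcal{OS}$ is generated by the $\omega_t$.

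To establish \eqref{eq:keyid_os}, I would expand both sides with the recursive definitions of $\tsh$ and $\shuffle$. Both become signed sums indexed by the interleavings $\sigma$ of the letters of $u$ with those of $v$, with identical signs $(-1)^{\mathrm{inv}(\sigma)}$. The only discrepancy is that in the $\sigma$-term of $u\tsh v$, each letter $t_p$ originating from $u$ appears conjugated as $t_p^{w_\sigma(p)}$, where $w_\sigma(p)$ is the product (in the order of appearance in $\sigma$) of the letters of $v$ that precede $t_p$. Thus \eqref{eq:keyid_os} reduces to a term-by-term claim: $\lambda$ assigns the same value to each conjugated tensor $a_1\otimes\cdots\otimes a_{i+j}$ and to its untwisted counterpart $a_1'\otimes\cdots\otimes a_{i+j}'$, which by the very definition of $\lambda$ comes down to the equality of partial intersections $\bigcap_{r\leq k}H_{a_r}=\bigcap_{r\leq k}H_{a_r'}$ for every $k$.

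The substantive step is this last equality. Fix $k$ and let $U_k\subset T$ be the set of $v$-letters occurring among $a_1,\dots,a_k$; both partial intersections are contained in $F_k:=\bigcap_{u\in U_k}H_u$. For each conjugated letter $a_r=t_p^{w_\sigma(p)}$ with $r\leq k$, the element $w_\sigma(p)$ is by construction a product of elements of $U_k$, so it fixes $F_k$ pointwise. Combined with the general identity $H_{t_p^{w}}=w^{-1}(H_{t_p})$, this forces $H_{t_p^{w_\sigma(p)}}\cap F_k = H_{t_p}\cap F_k$, and the two partial intersections coincide. The main obstacle is thus the combinatorial bookkeeping of matching interleavings with their correct conjugation words in the two recursions, which is a routine induction on $i+j$; the conjugation-invariance of intersections above is the one substantive geometric input.
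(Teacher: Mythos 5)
Your proposal is correct and follows essentially the same route as the paper: both reduce the theorem to the key identity $\lambda(u\tsh v)=\lambda(u\shuffle v)$ and then observe that the conjugations appearing in the twisted shuffle do not affect the successive intersections computed by $\lambda$. Your fixed-space argument ($w_\sigma(p)$ fixes $F_k$ pointwise, hence $H_{t_p^{w_\sigma(p)}}\cap F_k=H_{t_p}\cap F_k$) is a clean and fully detailed justification of the step the paper dispatches with the terse remark that ``we can ignore the conjugations.''
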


\begin{lemm}
 For $u,v\in \mathcal{T}(\bk^T)$, we have $\lambda(u\shuffle v) = \lambda( u \tsh v)$.
\end{lemm}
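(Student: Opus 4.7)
The plan is to reduce to the monomial case and then match the two products term-by-term after applying $\lambda$, using the basic interaction between conjugation of reflections and intersection of hyperplanes.

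By bilinearity of both $\shuffle$ and $\tsh$, it suffices to treat $u = t_1 \otimes \cdots \otimes t_i$ and $v = u_1 \otimes \cdots \otimes u_j$. First I would expand both sides as sums over $(i,j)$-shuffles. A straightforward induction on $i+j$ from the recursive definitions in Definition~\ref{def:tsh} shows that
\[
u \shuffle v = \sum_{\sigma} \epsilon(\sigma)\, z_1^\sigma \otimes \cdots \otimes z_{i+j}^\sigma,
\qquad
u \tsh v = \sum_{\sigma} \epsilon(\sigma)\, y_1^\sigma \otimes \cdots \otimes y_{i+j}^\sigma,
\]
where $\sigma$ runs over shuffles, $\epsilon(\sigma)$ is the usual shuffle sign, $z_r^\sigma$ is the $r$-th letter of the interleaving (either some $t_a$ or some $u_b$), and in the twisted version $y_r^\sigma = z_r^\sigma$ when $z_r^\sigma$ is a $u$-letter, while $y_r^\sigma = t_a^{g_r^\sigma}$ when $z_r^\sigma = t_a$, with $g_r^\sigma$ the product (in the induced order) of all $u$-letters occupying positions $< r$ in $\sigma$. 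The signs agree because the factor $(-1)^i$ in both recursions is precisely the sign for pulling a $u$-letter past all currently remaining $t$-letters.

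It therefore suffices to show, for each shuffle $\sigma$, that $\lambda(z_1^\sigma \otimes \cdots \otimes z_{i+j}^\sigma) = \lambda(y_1^\sigma \otimes \cdots \otimes y_{i+j}^\sigma)$. Using the definition of $\lambda$, this reduces to proving by induction on $m$ that the two prefix intersections
\[
Y^{z}_m := H_{z_1^\sigma}\cap\cdots\cap H_{z_m^\sigma}
\qquad\text{and}\qquad
Y^{y}_m := H_{y_1^\sigma}\cap\cdots\cap H_{y_m^\sigma}
\]
coincide. The base $m=0$ is trivial, and if $z_m^\sigma$ is a $u$-letter then $y_m^\sigma = z_m^\sigma$ and the step is immediate. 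If $z_m^\sigma = t_a$, then $y_m^\sigma = t_a^g$ with $g = g_m^\sigma$, and since the convention is $t^u = utu$, one has $H_{t_a^g} = g^{-1}(H_{t_a})$. By the inductive hypothesis, $Y^y_{m-1}=Y^z_{m-1}$ contains every $H_{u_b}$ for $u_b$ at a position $<m$, hence is pointwise fixed by each such $u_b$ and thus by the product $g$. The elementary observation that any subspace $Y$ pointwise fixed by $g$ satisfies $Y \cap g^{-1}(H) = Y \cap H$ for every hyperplane $H$ then gives $Y^y_m = Y^y_{m-1}\cap g^{-1}(H_{t_a}) = Y^z_{m-1}\cap H_{t_a} = Y^z_m$, completing the induction.

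I expect the only delicate point to be the bookkeeping in the first step: unravelling the recursion for $\tsh$ to confirm that the conjugating element attached to each $t_a$ in a shuffle term is exactly $g_r^\sigma$ as described, with the correct ordering. Once this is in place, the rest is the short inductive computation above, and the equality $\lambda(u\shuffle v)=\lambda(u\tsh v)$ follows.
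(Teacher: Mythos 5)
Your proof is correct and takes essentially the same route as the paper's: the paper simply observes that $H_t \cap H_u = H_t \cap H_{tut}$ and asserts that ``more generally we can ignore the conjugations'' when computing $\lambda$ of a twisted shuffle, which is precisely what your prefix-intersection induction makes rigorous. One small wording slip: $Y^z_{m-1}$ \emph{is contained in} each $H_{u_b}$ with $u_b$ at a position $<m$ (not the reverse), which is what you need to conclude it is pointwise fixed by $g$; with that corrected the argument is complete.
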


\begin{proof}
 For $t,u\in T$, it is easily checked that $\{\rho(t),\rho(u)\}$ generates the same linear subspace as $\{\rho(t),\rho(tut)\}$.  By taking the orthogonal subspaces,  we have $H_t \cap H_u = H_t \cap H_{tut}$.
 It follows $\lambda(t\shuffle u) = \lambda( t\tsh u )$, and more generally we can ignore the conjugations when computing $\lambda( (t_1\otimes \cdots\otimes t_i) \tsh (u_1\otimes \cdots\otimes u_j) )$.  We thus have
 \[
   \lambda( (t_1\otimes \cdots\otimes t_i) \tsh (u_1\otimes \cdots\otimes u_j) )
   =
   \lambda( (t_1\otimes \cdots\otimes t_i) \shuffle (u_1\otimes \cdots\otimes u_j) ),
 \]
 and the result follows.
\end{proof}

\begin{proof}[Proof of Theorem~\ref{theo:nichols_os}]
The construction of $\mathcal{OS}$ described above shows that it is generated by the elements $\lambda( t_1 \shuffle \cdots \shuffle t_i)$ for $t_1,\dots,t_i\in T$, with the product of $\lambda( t_1 \shuffle \cdots \shuffle t_i)$ and $\lambda( u_1 \shuffle \cdots \shuffle u_j)$ given by $\lambda( t_1 \shuffle \cdots \shuffle t_i \shuffle u_1 \shuffle \cdots \shuffle u_j)$.  The previous lemma says that $\shuffle$ can be replaced with $\tsh$ everywhere, and the result follows.
\end{proof}

\begin{rema} \label{finalremarksec7}
It would be very interesting to find an explicit description of the kernel of the map $\mathcal{N} \to \mathcal{OS}$, for example by giving an explicit set of generators.  Clearly, this kernel contains the tensors $t_1 \tsh \dots \tsh t_k$ where $\{t_1,\dots,t_k\}$ is a dependent set (i.e., $\dim(H_{t_1} \cap \dots \cap H_{t_k}) > n-k$).
\end{rema}

\section{Cyclic action on the algebras and homology of the noncrossing partition lattice}
\label{sec:cyclic_action}

Let $Z\subset W$ be the cyclic group generated by $c$, and let $R$
denote its character ring (i.e., the ring of functions $Z \to
\mathbb{C}$).  The group $Z$ acts on $T$ via $c\cdot t = c t c^{-1}$,
and also on $\bT$ and $\dT$ via the natural identification $T\simeq \bT
\simeq \dT$.  We denote $\langle c^i \rangle \subset Z$ the subgroup
generated by $c^i$.  When $Z$ acts on some object $X$ (a set or a vector
space), we denote $X^{\langle c^i \rangle}$ the subobject of fixed
points and $\operatorname{ch}_Z(X) \in R$ the associated character.  The
evaluation of the character $\operatorname{ch}_Z(X)$ at $c^i\in Z$ is
the trace of $c^i$ acting on $X$, denoted $\tr(c^i,X)$.

The goal of this section is to give formulas for the characters of $Z$ acting on $\Alg(W)$ and $\Dual(W)$, moreover we prove that the top degree component of $\Dual(W)$ is isomorphic to the homology of $NC(W)$ as a $k[Z]$-module.  This gives an alternative proof of a result of Zhang~\cite{Zhang20}.

\subsection{Refinement of the relation between Hilbert series }

The Hilbert series of a graded algebra can be naturally refined to take
into account the action of $Z$.  Explicitly, we define:
\[
   \operatorname{Hilb}_Z( \Alg(W) , q )
   =
   \sum_{n \geq 0} \operatorname{ch}_Z (\Alg_n (W)) q^n,
\]
and similarly for $\Dual(W)$.

\begin{prop}
We have the following identity in $R[[q]]$:
\[
   \operatorname{Hilb}_Z( \Alg(W) , q )
   \operatorname{Hilb}_Z( \Dual(W) , -q )
   = 1
\]
where $1$ is the trivial character of $Z$.
\end{prop}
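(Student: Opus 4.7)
The plan is to upgrade the standard Koszul-duality identity $\operatorname{Hilb}(Q,q)\operatorname{Hilb}(Q^!,-q)=1$ from Proposition~\ref{prop:numerical_koszul} to an equivariant version, by showing that the minimal free resolution of $\bk$ constructed in Theorem~\ref{theo:koszulity} is an exact sequence of graded $Z$-modules, and then taking equivariant Euler characteristics.

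First I would verify that $Z$ acts compatibly on every object that appears. The action of $c$ on $T$ by conjugation preserves the condition $tu\leq_T c$ (since conjugation by $c$ is an automorphism of $(W,\leq_T)$ fixing $c$), so it induces graded algebra automorphisms of $\Alg$ and $\Dual$ via the presentations in \eqref{eq:pres_alg} and Theorem~\ref{theo:dual_presentation}. Similarly, since $\Delta^+(w)$ depends only on the pair $(\Gamma(w),w)$ and $NC$ is conjugation-stable, $Z$ acts on $\bk^{\Delta^+}$, interpreted as oriented simplicial chains (so that $c$ acts with a sign depending on how it permutes the vertices of each face). The maps $\partial_j$ of \eqref{def_partial} are then $Z$-equivariant: the twisted conjugation $\bt_i^{\bt_{i-1},\dots,\bt_0}$ commutes with $c$-conjugation, and the signs match because the $Z$-action on the chain space already carries the orientation data. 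Equivalently, by Proposition~\ref{iso_coproduct}, the resolution can be rewritten in the form \eqref{other_free_resolution} using the coproduct $\delta$ on $\Dual^*$; since this coproduct is dual to the $Z$-equivariant product of $\Dual$, equivariance of the $\eth_j$ is immediate.

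Taking the equivariant Euler characteristic of the resolution in the Grothendieck group of graded $Z$-modules, and using that each term $\Alg\otimes\Dual^*_i$ is free over $\Alg$ on generators concentrated in degree $i$, gives
\begin{equation*}
   \operatorname{Hilb}_Z(\Alg,q)\cdot \sum_{i=0}^n (-q)^i\, \operatorname{ch}_Z(\Dual^*_i) = 1.
\end{equation*}
It remains to replace $\operatorname{ch}_Z(\Dual^*_i)$ by $\operatorname{ch}_Z(\Dual_i)$. In general $\chi_{V^*}(g)=\chi_V(g^{-1})$, so this amounts to showing that the $Z$-character of $\Dual_i$ is real-valued, i.e.\ invariant under $c\mapsto c^{-1}$. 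Using the basis $\{\df : f\in\Delta^+_{i-1}\}$ of $\Dual_i$ from Theorem~\ref{theo:basis}, and the fact that $c$ permutes this basis up to signs coming precisely from the orientation-change of the associated simplicial chains, the character of $c^j$ on $\Dual_i$ is a signed count of $c^j$-fixed faces of $\Delta^+_{i-1}$; in particular it takes integer values, hence is real. This yields the claimed identity.

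The main obstacle, as usual in such equivariant refinements, is the bookkeeping of signs: making sure that the $Z$-action on $\bk^{\Delta^+_i}$ (with the orientation convention that renders $\partial_j$ equivariant) is the \emph{same} as the one inherited from $\Dual_{i+1}^*$ through the isomorphism $\psi$ of Proposition~\ref{iso_coproduct}. A clean way to sidestep this is to use the original resolution \eqref{def_complex} and conclude directly
\begin{equation*}
   \operatorname{Hilb}_Z(\Alg,q)\cdot \sum_{j=0}^n (-q)^j\, \operatorname{ch}_Z(\bk^{\Delta^+_{j-1}}) = 1,
\end{equation*}
and then invoke Theorem~\ref{theo:basis} to identify $\operatorname{ch}_Z(\bk^{\Delta^+_{j-1}}) = \operatorname{ch}_Z(\Dual_j)$ once the orientation signs are matched on both sides; this then avoids the duality detour entirely.
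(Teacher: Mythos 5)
Your core argument --- pass to the Koszul form \eqref{other_free_resolution} of the resolution, whose maps $\eth_j$ are dual to the $Z$-equivariant product of $\Dual$ and hence equivariant, split by total degree, and apply the Hopf trace formula / equivariant Euler characteristic --- is exactly the paper's proof, and that part is sound. You are also right to flag the $\Dual^*_i$ versus $\Dual_i$ issue, which the paper passes over in silence.

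There is, however, a genuine error in the way you handle $\Delta^+$. Conjugation by $c$ does \emph{not} preserve the complex $\Delta^+$: the positive root of $ctc^{-1}$ is $\pm c(\rho(t))$, and when $c$ sends some $\rho(t_i)$ to a negative root the compatibility condition $\langle\rho(t_i)|\rho(t_j)\rangle\geq 0$ of Definition~\ref{def:clust1} can fail for the conjugated pair. So there is no signed-permutation action of $Z$ on $\bk^{\Delta^+}$, the equivariance of the maps $\partial_j$ in the form \eqref{def_partial} cannot be argued this way, and your final ``clean sidestep'' via $\operatorname{ch}_Z(\bk^{\Delta^+_{j-1}})$ does not get off the ground. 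The same false claim undermines your justification that $\operatorname{ch}_Z(\Dual_i)$ is real-valued: $c\cdot\df$ is the monomial $\dx_1\cdots\dx_j$ in the conjugated reflections, which by Theorem~\ref{theo:prod} expands as a signed sum over possibly \emph{several} basis elements, not as $\pm$ a single one, so the trace is not a signed count of fixed faces. The conclusion you need is nevertheless true and easy to repair: the matrix of $c$ on $\Dual_i$ in the basis $\{\df\}$ has entries in $\{-1,0,1\}$ by Corollary~\ref{coro:dual_product} (or, more simply, the algebra and the automorphism are defined over the prime field), so the character is integer-valued, hence real, and $\operatorname{ch}_Z(\Dual^*_i)=\operatorname{ch}_Z(\Dual_i)$. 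With that repair your argument reduces to the paper's.
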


\begin{proof}
   Using the graduation, the exact complex
in~\eqref{other_free_resolution} can be splitted as a direct sum of
exact complexes.  We find that the complex
   \[
     0 \lra \Alg_0(W) \otimes \Dual^*_n(W) \stackrel{\eth_{n}}{\lra}
\dots
     \stackrel{\eth_{1}}{\lra}
     \Alg_n(W) \otimes \Dual^*_0(W)
     \lra 0
   \]
   (where the general term is $\Alg_i(W) \otimes \Dual^*_j(W)$ with
$i+j=n$) is exact for $n>0$.  Moreover it is clear that the maps
$\eth_i$ commute with the action of $Z$.  By the Hopf trace
formula~\cite{Wac07}, the alternating sum of the summands in this complex
is $0$ in $R$.  This precisely says that the coefficient of $q^n$ in $ 
\operatorname{Hilb}_Z( \Alg(W) , q ) \operatorname{Hilb}_Z( \Dual(W) ,
-q ) $ is $0$.  As $\operatorname{ch}_Z(\Alg_0(W)) =
\operatorname{ch}_Z(\Dual_0(W)) = 1$, this completes the proof.
\end{proof}

Note that the $q$-coefficientwise evaluation of the series
$\operatorname{Hilb}_Z( \Alg(W) , q ) \in R[[q]]$ at $c^i\in Z$ is:
\[
   \sum_{k\geq 0} \tr( c^i , \Alg_k(W) ) q^k.
\]

\subsection{Cyclic action on our algebras}

The action of $Z$ permutes the elements in $\Dbm(W)$.  We first characterize the fixed points of $\langle c^i \rangle$.

\begin{prop}
  $\Dbm(W)^{\langle c^i \rangle}$ is a Garside monoid having $\bc$ as
Garside element, and its underlying lattice is $\bNC(W)^{\langle c^i
\rangle} \simeq NC(W)^{\langle c^i \rangle}$.
\end{prop}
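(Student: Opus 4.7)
The plan is to verify the axioms of Definition~\ref{defgarside} for $M := \Dbm(W)^{\langle c^i\rangle}$, using that $\langle c^i\rangle$ acts on $\Dbm(W)$ by monoid automorphisms. First I will check that the action $c \cdot \bt = \mathbf{ctc^{-1}}$ on $\bT$ extends to a well-defined action on the monoid $\Dbm(W)$: this amounts to observing that the defining relations of Definition~\ref{defi:dbm} are preserved, since $c$ is fixed under conjugation by $c$ and hence the relation $tu \leq_T c$ is stable when $(t,u)$ is $c$-conjugated. Being monoid automorphisms, the elements of $\langle c^i\rangle$ preserve length, left and right divisibility, and the gcd/lcm operations. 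In particular $M$ is a submonoid which inherits atomicity and cancellativity from $\Dbm(W)$, and for $x,y\in M$ the gcd/lcm computed in $\Dbm(W)$ is fixed (since these operations are canonical, hence automorphism-equivariant), so it lies in $M$ and endows $(M,\prec_\ell)$ and $(M,\prec_r)$ with their lattice structures.

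Next, I will show that $\bc$ serves as a Garside element of $M$. That $\bc$ itself is fixed follows from the fact that conjugating the reduced factorization $s_1\cdots s_n$ of $c$ by $c$ produces another reduced factorization of $c$, which represents the same simple braid by Lemma~\ref{lemm:transitivity}. The left divisors of $\bc$ in $\Dbm(W)$ coincide with its right divisors, and both equal $\bNC(W)$, so intersecting with $M$ yields a common set of left and right divisors equal to $\bNC(W)^{\langle c^i\rangle}$, which is finite.

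The key remaining point is that $\bNC(W)^{\langle c^i\rangle}$ generates $M$ as a monoid. For this I will use uniqueness of the Garside normal form in $\Dbm(W)$: any $m \in M$ factors uniquely as $m = p_1 \cdots p_k$ with $p_j \in \bNC(W)$ satisfying the appropriate greedy condition. Applying the automorphism $c^i \cdot (-)$ to both sides gives another such normal form of $m$; uniqueness forces $c^i \cdot p_j = p_j$ for every $j$, so each $p_j$ lies in $\bNC(W)^{\langle c^i\rangle}$. This establishes that $M$ is Garside with Garside element $\bc$ and lattice of simples $\bNC(W)^{\langle c^i\rangle}$. The final isomorphism $\bNC(W)^{\langle c^i\rangle} \simeq NC(W)^{\langle c^i\rangle}$ is immediate from the $\langle c^i\rangle$-equivariance of the canonical poset bijection $\bw \leftrightarrow w$, since both actions are induced by conjugation by $c^i$.

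The proof is really a matter of bookkeeping: the main conceptual obstacle, if any, is to keep track of the fact that all the relevant structures (length, left/right divisibility, gcd, lcm, Garside normal form) are canonically defined from the monoid structure and therefore descend automatically to the fixed-point submonoid; once this is articulated, each axiom of Definition~\ref{defgarside} reduces to invoking a standard Garside-theoretic statement from \cite{DDGKM15} together with equivariance under the automorphism $c^i\cdot(-)$.
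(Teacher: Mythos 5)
Your proof is correct and follows the same route as the paper, which simply asserts that the fixed-point set of a Garside automorphism preserving the Garside element is again a Garside monoid with the same Garside element; you have filled in the routine verification of the axioms (equivariance of divisibility, gcd/lcm, and the normal form) that the paper leaves implicit.
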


\begin{proof}
An element of $Z$ acts by automorphism on $\Dbm(W)$, moreover this action preserves the Garside element $\bc$.  It is a straightforward consequence of the axioms of Garside monoids in Definition~\ref{defgarside} that the fixed-point set of such an automorphism is a Garside submonoid with the same Garside element.
\end{proof}

We can thus use the same argument as in the case of $\Dbm(W)$ and get
the following:

\begin{coro}
  The growth function of $\Dbm(W)^{\langle c^i \rangle}$ is:
  \[
    \sum_{ \bb \in \Dbm^{\langle c^i \rangle}}
    q^{|\bb|}
    =
     \Big( \sum_{ w \in NC(W)^{\langle c^i \rangle} }
    \mu(w) q^{\lt(w)}
     \Big)^{-1} .
  \]
  where $\mu$ is the Möbius function of $NC(W)^{\langle c^i \rangle}$ and $|\bb|$ is the length in $\Dbm(W)$.
\end{coro}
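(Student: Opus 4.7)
The plan is to apply the same chain of arguments used for $\Dbm(W)$ itself, now to the Garside submonoid $\Dbm(W)^{\langle c^i\rangle}$ provided by the preceding proposition.

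First, by that proposition, $\Dbm(W)^{\langle c^i\rangle}$ is a Garside monoid with Garside element $\bc$, hence in particular atomic, cancellative, and a lattice under left divisibility. So the Cartier--Foata identity~\eqref{eq:length_genseries_0} applies verbatim, yielding
\[
\sum_{\bb\in\Dbm(W)^{\langle c^i\rangle}} q^{|\bb|}
=
\Big(\sum_{\bb\in\Dbm(W)^{\langle c^i\rangle}} \mu'(\bb)\,q^{|\bb|}\Big)^{-1},
\]
where $\mu'$ is the Möbius function of the fixed-point monoid under left divisibility, and $|\bb|$ is the length inherited from $\Dbm(W)$ (which remains well-defined on the submonoid because relations in $\Dbm(W)$ are homogeneous).

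Next, I would invoke the result of Albenque--Nadeau~\cite{Alb09} that on a Garside monoid $M$ with Garside element $\Delta$ the Möbius function vanishes outside the set of left divisors of $\Delta$. Applied here with $\Delta=\bc$, this collapses the sum to the simple braids of the fixed monoid, i.e.\ to $\bNC(W)^{\langle c^i\rangle}$. Using the canonical isomorphism $\bNC(W)^{\langle c^i\rangle}\simeq NC(W)^{\langle c^i\rangle}$ from the proposition, and noting that the length $|\bw|$ in $\Dbm(W)^{\langle c^i\rangle}$ of a simple braid equals the absolute length $\lt(w)$ of the corresponding noncrossing partition (since this already holds in $\Dbm(W)$), the right-hand side rewrites as $\bigl(\sum_{w\in NC(W)^{\langle c^i\rangle}} \mu(w)\,q^{\lt(w)}\bigr)^{-1}$, giving exactly the claim.

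The only subtle point is the identification of the Möbius functions: $\mu'$ is the Möbius function of the poset $\bNC(W)^{\langle c^i\rangle}$, while the statement refers to $\mu$, the Möbius function of $NC(W)^{\langle c^i\rangle}$. These match because the proposition states that the divisibility lattice of $\Dbm(W)^{\langle c^i\rangle}$ is precisely $\bNC(W)^{\langle c^i\rangle}$, which is isomorphic to $NC(W)^{\langle c^i\rangle}$ as a poset; the Möbius function is a poset invariant, so the two coincide under this isomorphism. This is the one place where the preceding proposition does real work, since a priori one might only have a set-theoretic identification $\bNC(W)^{\langle c^i\rangle}\simeq NC(W)^{\langle c^i\rangle}$ without the lattice structure being the correct one. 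Apart from that verification, the proof is a direct transcription of the derivation of~\eqref{eq:length_genseries} with $\Dbm(W)$ replaced by $\Dbm(W)^{\langle c^i\rangle}$.
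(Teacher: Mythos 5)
Your proposal is correct and follows the same route as the paper: the paper derives the corollary by applying ``the same argument as in the case of $\Dbm(W)$'' to the Garside submonoid $\Dbm(W)^{\langle c^i\rangle}$, i.e.\ Cartier--Foata inversion plus the Albenque--Nadeau vanishing of the M\"obius function outside the divisors of the Garside element $\bc$, together with the lattice identification $\bNC(W)^{\langle c^i\rangle}\simeq NC(W)^{\langle c^i\rangle}$. Your attention to the fact that $|\bb|$ denotes the length inherited from $\Dbm(W)$ matches the paper's own parenthetical remark following the corollary.
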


Note that in the previous statement, $|\bb|$ is the length function of
elements $\bb \in \Dbm(W)$ (which is in general an integer multiple of
the length function in the submonoid $\Dbm(W)^{\langle c^i \rangle}$).

\begin{rema}
   There are quite a few results about the action of $Z$ on $NC(W)$ in the literature.  For example, see~\cite[Section~5.4.2]{Armstrong06}.  However, to our knowledge there is no explicit description of the subposets $NC(W)^{\langle c^i \rangle}$.  It turns out that in each case there is an isomorphism $NC(W)^{\langle c^i \rangle} \simeq NC(W')$ for a smaller reflection group $W'$ though we don't have any conceptual explanation.  
\end{rema}

\begin{coro}[{\cite[Theorem 4.60]{Zhang20}}]
  We have:
  \[
    \sum_{k=0}^n  \tr( c^i , \Dual_k(W) ) q^k
    =
    \sum_{ w \in NC(W)^{\langle c^i \rangle} }
    \mu(w) (-q)^{\lt(w)}
  \]
  where $\mu$ is the Möbius function of $NC(W)^{\langle c^i \rangle}$.
\end{coro}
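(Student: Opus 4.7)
The plan is to evaluate the character identity from the preceding proposition pointwise at $c^i \in Z$ and combine it with the formula for the growth function of $\Dbm(W)^{\langle c^i\rangle}$ given in the preceding corollary.

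First, I would observe that $\Alg_k(W)$ has a canonical $\bk$-basis given by $\{\bb \in \Dbm(W) \;|\; |\bb|=k\}$, and $Z$ acts on this basis by permutation (since the cyclic action sends generators to generators and preserves length). Hence the trace of $c^i$ on $\Alg_k(W)$ equals the number of fixed points of $c^i$ on this basis, and therefore
\[
 \sum_{k\geq 0} \tr(c^i,\Alg_k(W)) q^k = \sum_{\bb \in \Dbm(W)^{\langle c^i \rangle}} q^{|\bb|}.
\]
By the corollary on growth functions of $\Dbm(W)^{\langle c^i\rangle}$, this equals $\bigl(\sum_{w \in NC(W)^{\langle c^i\rangle}} \mu(w) q^{\lt(w)}\bigr)^{-1}$.

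Next, I would evaluate the identity $\operatorname{Hilb}_Z(\Alg(W),q)\operatorname{Hilb}_Z(\Dual(W),-q) = 1$ at $c^i$, which yields
\[
 \Bigl(\sum_{k} \tr(c^i,\Alg_k(W)) q^k\Bigr)\Bigl(\sum_{k} \tr(c^i,\Dual_k(W)) (-q)^k\Bigr) = 1.
\]
Substituting the expression for the first factor found above, we get
\[
 \sum_{k=0}^n \tr(c^i,\Dual_k(W)) (-q)^k = \sum_{w \in NC(W)^{\langle c^i\rangle}} \mu(w) q^{\lt(w)}.
\]
Finally, replacing $q$ with $-q$ gives the desired identity.

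The proof is essentially a formal manipulation once the two ingredients (the character-level Hilbert series identity and the fixed-point growth function formula) are in hand, so there is no real obstacle; the only minor point to double-check is the permutation action on the canonical basis of $\Alg_k(W)$, which follows from the fact that the conjugation action of $c$ on $\bT$ extends to a monoid automorphism of $\Dbm(W)$ preserving the length grading.
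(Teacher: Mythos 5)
Your proposal is correct and follows exactly the paper's own argument: the trace on $\Alg_k(W)$ is computed by counting fixed points of the permutation action on the monomial basis, identifying $\operatorname{Hilb}_Z(\Alg(W),q)$ evaluated at $c^i$ with the growth function of $\Dbm(W)^{\langle c^i\rangle}$, and then inverting via the equivariant Hilbert series identity. The paper's proof is a compressed version of the same formal manipulation, so there is nothing to add.
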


\begin{proof}
   As the action of $Z$ permutes the elements of $\Dbm(W)$, the character
of the action is obtained by counting fixed points.  More precisely, the
growth function in the previous corollary is the coefficientwise
evaluation at $c^i$ of $\operatorname{Hilb}_Z( \Alg(W) , q)$.  The
result thus follows from the previous corollary and the relation between
$\operatorname{Hilb}_Z( \Alg(W) , q)$ and $\operatorname{Hilb}_Z(
\Dual(W) , q)$.
\end{proof}

\subsection{Homology of the noncrossing partition lattice}

Let us begin with a few preliminaries related with the shuffle product. 
Note that the shuffle product $\tsh$ has a natural extension to
$\mathcal{T}(\bk^W)$ using the same formula as in~\eqref{def:tsh}.

\begin{defi} \label{def_nabla}
  We define a map $\nabla : \mathcal{T}(\bk^W)\to \mathcal{T}(\bk^W)$ by:
  \[
    \nabla(t_1 \otimes \dots \otimes t_k)
    =
    \sum_{i=1}^{k-1} (-1)^i t_1 \otimes \dots \otimes t_{i-1} \otimes
(t_i t_{i+1} ) \otimes t_{i+2} \otimes \dots \otimes t_k.
  \]
  Note that this is $0$ if $k=1$.  The $i$th term in this sum will be
called the $i$th {\it contraction} of $t_i \otimes t_{i+1} $.
\end{defi}

\begin{lemm}  \label{nablavanish}
   For $x,x' \in \mathcal{T}(\bk^T)$ such that $x$ is homogeneous, we
have:
   \[
     \nabla (x \tsh x')
     =
     \nabla (x) \tsh x' + (-1)^{\deg(x)} x \tsh \nabla(x').
   \]
   In particular, $\nabla$ vanishes on $\mathcal{N}(W)$.
\end{lemm}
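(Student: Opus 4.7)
The plan is to establish the Leibniz-type identity by induction on $i + j$, where $i = \deg(x)$ and $j = \deg(x')$; the vanishing on $\mathcal{N}(W)$ then follows immediately, since $\mathcal{N}(W)$ is generated as a $\tsh$-algebra by single reflections, on which $\nabla$ trivially vanishes (the defining sum is empty in degree $1$). By multilinearity I reduce to the case of pure tensors $x = t_1 \otimes \cdots \otimes t_i$ and $x' = u_1 \otimes \cdots \otimes u_j$, and the cases $i = 0$ or $j = 0$ are trivial.

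Two preparatory facts are needed. First, isolating the $k=1$ summand in the definition of $\nabla$ gives the recursive formula
\[
\nabla(t \otimes y) \;=\; -(t y_1) \otimes y_{\geq 2} \;-\; t \otimes \nabla(y)
\]
for $t \in W$ and $y = y_1 \otimes y_{\geq 2}$ of degree $\geq 1$, the sign on the second term reflecting the shift $(-1)^k \mapsto -(-1)^{k-1}$. Second, setting $y^g := g^{-1} y_1 g \otimes \cdots \otimes g^{-1} y_m g$ for $g \in W$, we have $\nabla(y^g) = \nabla(y)^g$, since conjugation commutes with multiplication in $W$. I view $\tsh$ as extended to $\mathcal{T}(\bk^W)$ via the general conjugation $g^h := h^{-1} g h$, which coincides with $hgh$ whenever $h \in T$; this is needed because $\nabla$ produces tensors with entries in $W$, not just in $T$.

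The induction step proceeds by applying $\nabla$ to the recursive formula
\[
x \tsh x' \;=\; t_1 \otimes (x'' \tsh x') \;+\; (-1)^i\, u_1 \otimes (x^{u_1} \tsh x'''),
\]
using the preparatory formula, then the inductive hypothesis on the two inner expressions $\nabla(x'' \tsh x')$ and $\nabla(x^{u_1} \tsh x''')$ (combined with $\nabla(x^{u_1}) = \nabla(x)^{u_1}$), and finally comparing the result with the analogous expansions of $\nabla(x) \tsh x'$ and $(-1)^i x \tsh \nabla(x')$ obtained by first expanding $\nabla(x) = -(t_1 t_2) \otimes (t_3 \otimes \cdots \otimes t_i) - t_1 \otimes \nabla(x'')$ and then reapplying the recursive definition of $\tsh$. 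The main obstacle is bookkeeping: many terms must be tracked carefully. The decisive step is the cancellation of the \emph{interface contractions}, i.e., the terms where $t_1$ contracts with a $u_1$ that has bubbled up to the head of $x'' \tsh x'$, and the terms where $u_1$ contracts with the head $t_1^{u_1}$ of $x^{u_1} \tsh x'''$. These two contributions produce tensors with identical tails $(t_1 u_1) \otimes (x''^{u_1} \tsh x''')$ but opposite signs $(-1)^i$ and $(-1)^{i+1}$, the equality of the leading products following from the involution identity $u_1 \cdot t_1^{u_1} = t_1 \cdot u_1$. The surviving terms regroup, by reverse use of the recursive formulas, exactly into $\nabla(x) \tsh x' + (-1)^i x \tsh \nabla(x')$.
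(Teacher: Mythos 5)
Your proof is correct, and it reaches the identity by a genuinely different organization than the paper, though the decisive cancellation is the same. The paper argues globally and non-inductively: it expands $\nabla(x\tsh x')$ completely, indexes each term by a pair (shuffle, contraction position), and cancels every mixed contraction --- one applied to a factor $t_k^{u_1\cdots u_{\ell-1}}\otimes u_\ell$ --- against the contraction at the same position in the companion shuffle where that factor is replaced by $u_\ell\otimes t_k^{u_1\cdots u_\ell}$ (opposite sign, same product); the surviving contractions within the $t$'s and within the $u$'s then reassemble into $\nabla(x)\tsh x'$ and $(-1)^{\deg x}x\tsh\nabla(x')$. Your induction localizes this pairing to the head position: the identity $u_1\cdot t_1^{u_1}=t_1u_1$ together with the signs $(-1)^i$ versus $(-1)^{i+1}$ is exactly the paper's cancellation for the leading pair, and all deeper mixed contractions are silently absorbed into the inductive hypothesis. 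What you gain is that the sign bookkeeping the paper dispatches with ``we easily see'' and ``after checking the signs'' becomes a finite mechanical verification (the surviving terms match the recursive expansions of $\nabla(x)\tsh x'$ and $(-1)^i x\tsh\nabla(x')$ term by term); the cost is the extra scaffolding you correctly set up --- the recursion $\nabla(t\otimes y)=-(ty_1)\otimes y_{\geq 2}-t\otimes\nabla(y)$, the equivariance $\nabla(y^g)=\nabla(y)^g$ needed to handle the conjugated argument $x^{u_1}$, and the extension of $\tsh$ to $\mathcal{T}(\bk^W)$, which the paper also requires and introduces just before Definition~\ref{def_nabla}. The deduction of the vanishing on $\mathcal{N}(W)$ is identical in both.
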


\begin{proof}
   Assume $x = t_1 \otimes \dots \otimes t_i$     and $x' = u_1 \otimes
\dots \otimes u_j$.  We expand $\nabla (x \tsh x')$ using the definition
of the two operators.  Each term thus corresponds to the choice a
shuffle and the choice of a contraction in this shuffle.

   Consider a shuffle containing a factor $t_k^{u_1 \cdots u_{\ell-1}}
\otimes u_\ell$, and the other shuffle with the factor $t_k^{u_1 \cdots
u_{\ell-1}} \otimes u_\ell$ replaced with $ u_\ell \otimes t_k^{u_1
\cdots u_\ell}$ (which has therefore opposite sign).  We easily see that
the two contractions of these pairs give the same term with opposite
signs.

   Now consider the contractions only involving $t_k\otimes t_{k+1}$,
$1\leq k \leq i-1$.  After checking the signs, it is straightforward to
obtain $\nabla (x) \tsh x'$.  Similarly, the remaining terms give
$(-1)^{\deg(x)} x \tsh \nabla(x')$.
\end{proof}

\begin{rema}
   It is straightforward to prove $\nabla^2=0$.  This means that
$(\mathcal{T}(\bk^W), \tsh , \nabla)$ is a dg-algebra.  However, this is
not particularly relevant here.
\end{rema}

\begin{defi}
  We define a map $\varDelta : \mathcal{T}(\bk^W) \to \mathcal{T}(\bk^W)$
by:
  \[
    \varDelta( w_1 \otimes \dots \otimes w_k ) = \sum_{i=1}^k (-1)^i w_1
\otimes \dots \otimes w_{i-1} \otimes w_{i+1} \otimes \dots \otimes w_k.
  \]
  The $i$th term will be called the $i$th {\it deletion}.  Eventually, we
define $\xi : \mathcal{T}(\bk^W) \to \mathcal{T}(\bk^W)$ by:
  \[
    \xi( w_1 \otimes \dots \otimes w_k )
    =
    w_1 \otimes (w_1w_2) \otimes \dots \otimes (w_1w_2 \cdots w_{k-1}).
  \]
\end{defi}

Following the convention in Section~\ref{sec:shuffle}, we denote
$\mathcal{T}_{w,k}(\bk^W)$ the subspace of $\mathcal{T}(\bk^W)$
generated by tensors $w_1 \otimes \dots \otimes w_k$ such that $w=w_1
\cdots w_k$.  Note that $\xi$ restricted to $\mathcal{T}_{w,k}(\bk^W)$
is an isomorphism on its image, and its inverse is given by:
  \[
    \xi^{-1} ( w_1 \otimes \dots \otimes w_{k-1} )
    =
    w_1 \otimes (w_1^{-1} w_2) \otimes \dots \otimes (w_{k-1}^{-1} w).
  \]
Note also that $\xi$ commutes with the action of $Z$.

\begin{lemm} \label{xinabla}
   We have $\xi \circ \nabla = \varDelta \circ \xi$ on
$\mathcal{T}(\bk^W)$.  In particular, $\varDelta$ vanishes on
$\xi(\mathcal{N}(W))$.
\end{lemm}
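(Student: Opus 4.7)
My plan is to verify $\xi \circ \nabla = \varDelta \circ \xi$ by direct computation on a basis tensor $t_1 \otimes \cdots \otimes t_k \in \mathcal{T}(\bk^W)$, then deduce the vanishing statement from Lemma~\ref{nablavanish}. The underlying combinatorial observation, which drives everything, is that $\xi$ records the sequence of partial products $u_j := t_1 t_2 \cdots t_j$, and contracting an adjacent pair $t_i \otimes t_{i+1}$ into a single factor $t_i t_{i+1}$ is precisely what is needed to remove $u_i$ from this sequence while leaving all other partial products unchanged.

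First I would compute $(\xi \circ \nabla)(t_1 \otimes \cdots \otimes t_k)$. By Definition~\ref{def_nabla}, the $i$th term of $\nabla$ replaces $t_i \otimes t_{i+1}$ by $t_i t_{i+1}$ with sign $(-1)^i$; applying $\xi$ to this $(k-1)$-tensor gives the partial products $t_1, t_1 t_2, \ldots, t_1 \cdots t_{i-1}, t_1 \cdots t_{i+1}, \ldots, t_1 \cdots t_{k-1}$, which with the notation $u_j = t_1 \cdots t_j$ is exactly $u_1 \otimes \cdots \otimes u_{i-1} \otimes u_{i+1} \otimes \cdots \otimes u_{k-1}$, the partial-product tensor with $u_i$ omitted.

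Next I would compute $(\varDelta \circ \xi)(t_1 \otimes \cdots \otimes t_k) = \varDelta(u_1 \otimes \cdots \otimes u_{k-1})$. By definition of $\varDelta$ this equals $\sum_{i=1}^{k-1}(-1)^i u_1 \otimes \cdots \otimes u_{i-1} \otimes u_{i+1} \otimes \cdots \otimes u_{k-1}$. The two sums agree term by term, including signs, which establishes the identity. For the second assertion, Lemma~\ref{nablavanish} gives $\nabla(y) = 0$ for every $y \in \mathcal{N}(W)$; applying $\xi$ and using the identity just proved yields $\varDelta(\xi(y)) = \xi(\nabla(y)) = 0$, so $\varDelta$ vanishes on $\xi(\mathcal{N}(W))$.

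There is no substantive obstacle here: the computation is a one-line matching of terms. The only subtle point is keeping the index ranges and signs aligned, and in fact the definitions of $\nabla$ and $\varDelta$ were set up (with the same sign $(-1)^i$ on the $i$th operation) precisely so that this identity holds on the nose.
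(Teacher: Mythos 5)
Your proposal is correct and matches the paper's approach: the paper declares the identity $\xi\circ\nabla=\varDelta\circ\xi$ "straightforward" and omits the computation you carry out, and it deduces the vanishing of $\varDelta$ on $\xi(\mathcal{N}(W))$ from Lemma~\ref{nablavanish} exactly as you do. Your term-by-term matching of the partial products $u_j=t_1\cdots t_j$ is the intended (and correct) verification.
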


\begin{proof}
The first statement is straightforward and details are omitted.  The
second statement follows because $\nabla$ vanishes on $\mathcal{N}(W)$, by Lemma~\ref{nablavanish}.
\end{proof}

We give a brief definition of homology of posets, and refer to
\cite{Wac07} for details.

\begin{defi}
   For $-1 \leq m \leq n-2$, let $\mathcal{C}_m$ denote the vector space
freely generated by strict chains $w_0 < \dots < w_m$ in $NC(W) -
\{e,c\}$ (i.e., $NC(W)$ with its top and bottom elements removed).  Let
$\varDelta_m : \mathcal{C}_m \to \mathcal{C}_{m-1}$ be the operator
defined by
   \[
     \varDelta_m ( w_0 < \dots < w_m ) = \sum_{i=0}^m (-1)^i  ( w_0 <
\dots < w_{i-1} < w_{i+1} < \dots < w_m).
   \]
   It is straightforward to check that $\varDelta_{m} \circ
\varDelta_{m+1} = 0$.  The $m$th {\it reduced homology group} $\tilde
H_m(NC(W))$ of $NC(W)$ is defined as $\ker(\varDelta_m) /
\im(\varDelta_{m+1} )$.
\end{defi}

Note that $\varDelta_m$ is essentially the restriction of $\varDelta$,
upon identifying the chain $w_0 < \dots < w_m$ with the tensor $w_0
\otimes \dots \otimes w_m$.

As the action of $Z$ on $NC(W)$ preserves the order relation,
$\mathcal{C}_m$ is a $k[Z]$-module.  The map $\varDelta_m$ clearly
commutes with the action of $Z$, so that $\tilde H_m(NC(W))$ also
inherits a structure of $k[Z]$-module.

As $NC(W)$ is a {\it Cohen-Macaulay poset} (see~\cite{Wac07}), it has a
unique nonzero reduced homology group in top degree, namely the space: 
\[
  \tilde H_{n-2} (NC(W))
  = 
  \ker (\varDelta_{n-2})
  \subset 
  \mathcal{C}_{n-2}.  
\]
By {\it Philip Hall's theorem} (see~\cite{Wac07}), $\dim
\tilde H_{n-2} (NC(W))$ is the Möbius invariant of $NC(W)$, therefore it
is equal to $\dim \Dual_{n}(W)$.

To define explicitly the $k[Z]$-isomorphism between $\tilde H_{n-2}
(NC(W))$ and $\Dual_{n}(W)$, we use again the Nichols algebra and
identify $\Dual_{n}(W)$ to $\mathcal{N}_{c,n}(W)$ as $k[Z]$-modules.

\begin{theo}[Zhang]
   The map $ \xi : \mathcal{N}_{c,n}(W) \to \tilde H_{n-2}(NC(W))$
defined by
   \[
     t_1 \otimes \dots \otimes t_n
     \mapsto
     ( t_1 , t_1t_2 , \dots , t_1\cdots t_{n-1} )
   \]
   is an isomorphism of $Z$-module.
\end{theo}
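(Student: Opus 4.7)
The plan is to check four things in turn: that $\xi$ restricts to a well-defined map into the top-dimensional chain space of $NC(W) \smallsetminus \{e, c\}$, that its image lies in $\ker(\varDelta_{n-2})$, that it is $Z$-equivariant, and that it is a bijection via a dimension count.

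First I would verify well-definedness. For any basis tensor $t_1 \otimes \cdots \otimes t_n$ appearing in $\mathcal{N}_{c,n}(W) \subset \mathcal{T}_{c,n}(\bk^T)$, the equality $t_1 \cdots t_n = c$ is automatically a reduced $T$-factorization since $n = \ell_T(c)$. The subword property of the absolute order then forces $\ell_T(t_1 \cdots t_i) = i$ for each $1 \leq i \leq n-1$, so the partial products $t_1, t_1t_2, \ldots, t_1 \cdots t_{n-1}$ form a strict chain in $NC(W)$ lying strictly between $e$ and $c$, that is, an element of $\mathcal{C}_{n-2}$. Hence $\xi$ restricts to a well-defined $\bk$-linear map $\mathcal{N}_{c,n}(W) \to \mathcal{C}_{n-2}$.

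Next, by Lemma~\ref{xinabla} we have $\varDelta \circ \xi = \xi \circ \nabla$, and by Lemma~\ref{nablavanish} the operator $\nabla$ vanishes on $\mathcal{N}(W)$. Composing these facts, $\varDelta_{n-2} \circ \xi = 0$ on $\mathcal{N}_{c,n}(W)$, so the image sits inside $\ker \varDelta_{n-2} = \tilde H_{n-2}(NC(W))$. The map $\xi$ commutes with conjugation by $c$, as noted immediately after it was introduced, so it is $Z$-equivariant.

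For the isomorphism, injectivity is free: the explicit inverse formula for $\xi$ on $\mathcal{T}_{c,n}(\bk^W)$ restricts to an inverse on $\mathcal{N}_{c,n}(W)$. For surjectivity I would match dimensions. Theorem~\ref{theo_iso_nicholsquotient} identifies $\Dual(W) \simeq \mathcal{N}(W)/J_c$, and since $c \in NC_n(W)$ the component $\mathcal{N}_{c,n}(W)$ meets $J_c$ trivially; it therefore maps isomorphically onto $\Dual_c(W)$. Because $NC_n(W) = \{c\}$, this is the entire top-degree piece $\Dual_n(W)$, whose dimension equals $|\mu_{NC}(c)|$ by Corollary~\ref{coro:dim_mu}. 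On the homology side, $NC(W)$ is a Cohen--Macaulay poset of rank $n$, so Philip Hall's theorem gives $\dim \tilde H_{n-2}(NC(W)) = |\mu_{NC}(c)|$ as well. An injective $Z$-linear map between spaces of equal dimension is an isomorphism, completing the argument.

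The main obstacle, I expect, lies not in any single computation but in the bookkeeping: carefully verifying that the restriction of $\xi$ to the Nichols component $\mathcal{N}_{c,n}(W)$ really does land in the top chain space $\mathcal{C}_{n-2}$, and that the chain of identifications $\mathcal{N}_{c,n}(W) \simeq \Dual_c(W) = \Dual_n(W)$ and the equality with $\dim \tilde H_{n-2}(NC(W))$ all respect the $Z$-action. The analytic content of the argument --- the vanishing of $\varDelta \circ \xi$ on $\mathcal{N}(W)$ --- is already packaged in Lemmas~\ref{nablavanish} and~\ref{xinabla}.
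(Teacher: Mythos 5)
Your proposal is correct and follows essentially the same route as the paper: injectivity via the explicit inverse of $\xi$, $Z$-equivariance, the inclusion of the image in $\ker(\varDelta_{n-2})$ via Lemmas~\ref{nablavanish} and~\ref{xinabla}, and a dimension count matching $\dim\mathcal{N}_{c,n}(W)=\dim\Dual_n(W)$ with the M\"obius invariant of $NC(W)$ given by Philip Hall's theorem. You merely spell out two steps the paper leaves implicit (that the partial products form a strict chain in $NC(W)\smallsetminus\{e,c\}$, and the identification $\mathcal{N}_{c,n}(W)\simeq\Dual_c(W)=\Dual_n(W)$), both of which are verified correctly.
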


(Note that the formula defines a map on $\mathcal{T}_{c,n}(W)$, the map on $\mathcal{N}_{c,n}(W)$ is obtained by restriction.) 

\begin{proof}
   We have noted above that the restriction of $\xi$ to an homogeneous component is injective, and that $\xi$ commutes with the action of $Z$.  As both spaces have the same dimension (the Möbius invariant of $NC(W)$), it suffices to show that $\xi(\mathcal{N}_{c,n}(W)) \subset \tilde H_{n-2}(NC(W))$.     The inclusion $\xi(\mathcal{N}_{c,n}(W)) \subset \mathcal{C}_{n-2}$ is clear.  From Lemma~\ref{xinabla}, it follows that $\varDelta_{n-2}$ vanishes on $\xi(\mathcal{N}_{c,n}(W))$.  So we have indeed $\xi(\mathcal{N}_{c,n}(W)) \subset \tilde H_{n-2}(NC(W))$.
\end{proof}

\section{Perspectives}
\label{sec:questions}

In this final section, we discuss possible extensions of the present work to other kinds of braid groups.

\subsection{Finite well-generated complex reflection groups}

Bessis defined in~\cite{Bes15} the dual braid monoid associated to such a group.  This is again a quadratic monoid defined using the noncrossing partition lattice.  We conjecture that its monoid algebra is a Koszul algebra. Although there is no analog of the cluster complex in this setting, we can define a similar-looking simplicial complex: its facets are given by the decreasing factorizations of a Coxeter element, using the total orders introduced by Mühle~\cite{Muh15} (these total orders were introduced in order to prove the shellability of the noncrossing partition lattice using the adequate analog of Definition~\ref{def:compatiblereforder}).  We conjecture that the complex obtained in this way is shellable and can be used as in Section~\ref{sec:koszulity} to show that the dual braid monoid algebra is again a Koszul algebra.

\subsection{Free groups}

Despite the fact that free groups have an elementary structure, interesting questions appear when we consider them as Artin groups and study them from this perspective.  In this vein, Bessis~\cite{Bes06} gave both a topological and an algebraic definition of the associated dual braid monoids.  There is again a related noncrossing lattice, defined in terms of loops with no self-intersection.  Note that here the monoid is not locally finite (it has infinite cardinality in each degree).  Despite this technical problem, it could be interesting to investigate the existence of a simplicial complex that plays the role of the cluster complex, and leads to a minimal resolution as in Section~\ref{sec:koszulity}. 

\subsection{Affine Weyl groups}

The next step might be to consider the Artin group associated to affine Weyl groups.  Recent progress about these is done in \cite{McS17,PS19}, and Garside theory is an important tool.  The $K(\pi,1)$ problem is solved~\cite{PS19}, and the dual presentation is known (see~\cite[Theorem~C]{McS17} and~\cite[Theorem~8.16]{PS19}).  This means we can consider the dual braid monoid as a submonoid of the braid group, and its defining relations are again quadratic.  On the other side, the clusters in affine type can be related with noncrossing partitions using representation of quivers by work of Ingalls and Thomas~\cite{IngallsThomas}.  Following the lines of the finite type case, it might be possible to use clusters to show koszulity of the affine type dual braid monoid algebra. 

\section*{Acknowledgements}

The authors wish to thank the Erwin Schr\"odinger Institute for its 2017 program ``Algorithmic and Enumerative Combinatorics'' where this collaboration started, and Jang Soo Kim for early discussions.  We thank Vic Reiner for his suggestions about the Orlik-Solomon algebras and for drawing our attention to Yang Zhang's thesis~\cite{Zhang20}. Both authors are partially supported by the French ANR grant COMBIN\'E (19-CE48-0011).

\appendix

\section{The cluster complex.} 
\label{sec:clustercomplex}

As explained in the introduction, the cluster complex was introduced by Fomin and Zelevinsky~\cite{FZ03}, via a compatibility relation on the set of almost positive roots (faces are the sets of pairwise compatible elements).  A reformulation was given by Brady and Watt~\cite{BWLattice}, using reflection orderings.  It should be noted that their construction also extends the definition to non-crystallographic finite Coxeter groups (those have thus an associated cluster complex, despite the fact that there is no corresponding cluster algebra).  A further extension was done by Reading~\cite{Rea07}: while the complex in~\cite{FZ03,BWLattice} was realized using the {\it bipartite Coxeter element}, he showed that we can start with any standard Coxeter element instead.  Thus, there is a complex $\Delta(W,c)$ defined for $W$ and any standard Coxeter element $c$.  It will be simply denoted $\Delta$.  It can again be reformulated using reflection orderings, this essentially follows from~\cite{CLS14,BJV18}.  

The goal of this appendix is to give definitions that are self-contained and suited for our purpose.  Everything follows from the results in bibliography, but we sketch some proofs for convenience.

We only need to consider a subcomplex $\Delta^+ \subset \Delta$, the {\it positive part} of $\Delta$.  Therefore, it will be convenient to use reflections as vertices, rather than positive roots.  

\begin{defi} \label{def:clust1}
 A tuple of $n$ reflections $t_1,\dots ,t_n \in T$ is called a {\it positive $c$-cluster} if:
 \begin{itemize}
   \item $c = t_1 \cdots t_n$,
   \item and for any $i\neq j$, we have $\langle\rho(t_i)|\rho(t_j)\rangle \geq 0$.
 \end{itemize}
 (Recall that $\rho(t)$ is the positive root associated to $t$.)  Then, $\Delta^+$ is the simplicial complex having positive $c$-clusters as its facets.   We denote by $\Delta^+_i$ the set of $i$-dimensional faces in this complex (where dimension is cardinality minus 1).  By convention, $\Delta^+_{-1} = \{\varnothing\}$.
\end{defi}

This definition essentially comes from~\cite{BWLattice} in the case of the bipartite Coxeter element, and from~\cite{BJV18} for the case of other standard Coxeter elements.  It is equivalent to the definition from~\cite{Rea07}.

\begin{rema}
  The combinatorial structure of $\Delta^+$ depend on the chosen standard Coxeter element.  Two examples of non-isomorphic complexes $\Delta^+$ in type $A_3$ will be given in Appendix~\ref{sec:cluster_examples}.  On the other side, Reading~\cite{Rea07} has shown that this does not happen for the complex $\Delta$ (that we did not define here): all choices of a standard Coxeter element give the same simplicial complex up to isomorphism.
\end{rema}

It is apparent from the previous proposition that the cluster complex is related with noncrossing partitions: each face $\{t_1 , \dots t_k \}$, as a subset of a facet, can be realized as a subword of a minimal reflection factorization of $c$.  The following definition is thus valid:

\begin{defi}
  Each face $f=\{t_1, \dots, t_k\} \in \Delta^+$ can be indexed so that $t_1 \cdots t_k \in NC_k$, and this (well-defined) element is denoted $\nc(f):=t_1\cdots t_k$.  Following the usual convention, we denote $\bnc(f)\in\Dbm$ the corresponding simple braid.
\end{defi}

An important property is the following:

\begin{prop}  \label{prop:ball}
  The topological realization of $\Delta^+$ is an $n-1$-dimensional ball.  
\end{prop}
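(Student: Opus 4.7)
I would prove this via the simplicial fan structure on the positive part of the cluster complex, which is cited later in the paper. By the results of Fomin-Zelevinsky~\cite{FZ03} (in the crystallographic case) and Reading-Speyer~\cite{RS09} (in general, for any standard Coxeter element $c$), the cones $\gamma(f) := \operatorname{Span}_{\mathbb{R}^+}\{\rho(t) : t \in f\}$ for $f \in \Delta^+$ form a simplicial fan $\mathcal{F}^+$ whose support is the cone $\mathcal{C} := \operatorname{Span}_{\mathbb{R}^+}\{\rho(s_1),\dots,\rho(s_n)\}$ generated by the simple roots; this is exactly the content of the proposition attributed to \cite{FZ03,RS09,Cha05} in Section~\ref{sec:clusterfan}.

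Given this, the argument is short. Since the simple roots form a basis of $\mathbb{R}^n$, the cone $\mathcal{C}$ is a full-dimensional simplicial cone, homeomorphic to $\mathbb{R}_{\geq 0}^n$, so its intersection with the unit sphere $S^{n-1}$ is a geometric $(n-1)$-simplex $\sigma$. The fan $\mathcal{F}^+$ restricts on $\sigma$ to a simplicial subdivision, and I would then identify the topological realization $|\Delta^+|$ with this subdivision via the map $f \mapsto \gamma(f) \cap S^{n-1}$. Well-definedness and the fact that the image is an actual geometric simplex of the right dimension follow from Lemma~\ref{lin_indep} (linear independence of the $\rho(t_i)$); bijectivity onto the geometric cells comes from the fan property together with the injectivity of $f \mapsto \gamma(f)$; and incidences are preserved because the simplicial fan axiom forces intersections of cones to be common faces. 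The conclusion is then that $|\Delta^+|$ is homeomorphic to $\sigma$, an $(n-1)$-dimensional ball.

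The main obstacle is really the cited fan result itself: establishing that the cones $\gamma(f)$ for $f\in\Delta^+$ form a simplicial fan with support exactly $\mathcal{C}$, for an arbitrary standard Coxeter element, is not trivial and requires the full machinery of~\cite{Rea07,RS09}. Once that is taken as given, the topological conclusion is essentially a formality. An alternative route, avoiding the fan, would be to invoke shellability of $\Delta^+$ (Athanasiadis-Brady-Watt in the bipartite case~\cite{ABW07}, and M\"uhle~\cite{Muh15} in greater generality) together with the pseudomanifold-with-boundary property of $\Delta^+$ to conclude that it is a shellable ball. Since the fan machinery is in any case needed in Section~\ref{sec:dualproduct}, the fan argument is the most economical one here.
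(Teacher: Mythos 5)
Your argument is correct and rests on the same external input as the paper's proof: the geometric realization of the cluster complex as a simplicial fan from \cite{FZ03,RS09}. The only difference is cosmetic --- the paper realizes the full complex $\Delta$ as an $(n-1)$-sphere via the complete fan and removes the unique facet with no positive roots, whereas you use the positive subfan directly to exhibit the realization of $\Delta^+$ as a simplicial subdivision of the $(n-1)$-simplex cut out by the simple-root cone; both topological deductions are immediate once the fan result is granted.
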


\begin{proof}
 In the bipartite case, this is an immediate consequence of the geometric realization of $\Delta$ via the cluster fan.  Indeed, it follows that $\Delta$ has the topological type of a $n-1$-dimensional sphere.  The subcomplex $\Delta^+$ is obtained by removing the unique facet containing no positive roots.  Topologically, it is thus a ball of the same dimension. 
 
 The case of non bipartite Coxeter elements follows, as the combinatorial type of $\Delta$ does not depend on the chosen Coxeter element.  Alternatively, we can use the non-bipartite cluster fan built in~\cite{RS09}, and the same argument as in the bipartite case.
\end{proof}
 
We also need a similar result about some subcomplexes $\Delta^+(w)$.  In the bipartite case, they were introduced in~\cite{BWLattice}.

\begin{defi} \label{def:parabolicsubcomplex}
For each $w \in NC$ with $w\neq 1$, let $\Delta^+(w)$ denote the full subcomplex of $\Delta^+$ with vertex set $\{t\in T\; :\; t\leq_T w\}$. 
\end{defi}

\begin{prop} \label{prop:deltaplusw}
If $1\leq j \leq n$ and $w \in NC_j$, the topological realization of $\Delta^+(w)$ is a $j-1$-dimensional ball.  In particular, it is contractible.
\end{prop}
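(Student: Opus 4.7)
The plan is to realize $\Delta^+(w)$ as the positive cluster complex of the parabolic subgroup $\Gamma(w)$ taken together with $w$ as its standard Coxeter element, and then to invoke Proposition~\ref{prop:ball} for this smaller reflection group. By Proposition~\ref{parabolic_standard_coxeter_element}, $w$ is indeed a standard Coxeter element of the rank-$j$ reflection group $\Gamma(w)$. Its reflection set is exactly $T(w)=\{t\in T~|~t\leq_T w\}$, i.e., the vertex set of $\Delta^+(w)$. The natural positive roots of $\Gamma(w)$, coming from the half-space $\Pi\cap V$ with $V=\operatorname{Fix}(\Gamma(w))^\perp$, are precisely $\{\rho(t):t\in T(w)\}$, and the ambient inner product on $\mathbb{R}^n$ restricts to the natural inner product on $V$. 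Consequently, the compatibility condition of Definition~\ref{def:clust1} gives the same answer whether computed in $(W,c)$ or in $(\Gamma(w),w)$.

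What remains is to identify $\Delta^+(w)$ with $\Delta^+(\Gamma(w),w)$ as simplicial complexes; that is, to show that a subset $F\subseteq T(w)$ is a face of $\Delta^+=\Delta^+(W,c)$ if and only if it is a face of $\Delta^+(\Gamma(w),w)$. I would use the reformulation of the positive cluster complex in terms of a $c$-compatible reflection ordering $\prec$ on $T$ (alluded to at the start of Appendix~\ref{sec:clustercomplex}, following \cite{CLS14, BJV18}): faces of $\Delta^+$ correspond to decreasing subwords of decreasing reduced $T$-factorizations of $c$ under $\prec$. The restriction of $\prec$ to $T(w)$ is a $w$-compatible reflection ordering on $\Gamma(w)$, and the same reformulation then identifies faces of $\Delta^+(\Gamma(w),w)$ with decreasing subwords of decreasing reduced factorizations of $w$ in $\Gamma(w)$. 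Matching both descriptions on subsets of $T(w)$ yields $\Delta^+(w)=\Delta^+(\Gamma(w),w)$, whence Proposition~\ref{prop:ball} applied to $\Gamma(w)$ gives that $\Delta^+(w)$ is a $(j-1)$-ball, which is in particular contractible.

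The main obstacle is verifying cleanly that a $c$-compatible reflection ordering on $T$ restricts to a $w$-compatible reflection ordering on $T(w)$ for every $w\in NC$, and that this restriction is compatible with the subword/decreasing-factorization correspondence on both sides. This is a classical feature of reflection orderings on parabolic subgroups (where inheritance of structure under restriction is a standard theme), and can be extracted from \cite{Dye93, BJV18}. Once this parabolic behavior is granted, the identification $\Delta^+(w)=\Delta^+(\Gamma(w),w)$ is essentially a bookkeeping exercise, and the topological conclusion is immediate from Proposition~\ref{prop:ball}.
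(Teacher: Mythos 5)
Your proposal is correct and follows essentially the same route as the paper: identify $\Delta^+(w)$ with the positive cluster complex of $(\Gamma(w),w)$ using Proposition~\ref{parabolic_standard_coxeter_element}, then apply Proposition~\ref{prop:ball} to that smaller group. The paper states the identification of the two complexes without proof (citing \cite{BJV18} for details), whereas you sketch a justification via restriction of the $c$-compatible reflection ordering; this is a fleshed-out version of the same step rather than a different argument.
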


\begin{proof}
By Proposition~\ref{parabolic_standard_coxeter_element}, $w$ is a standard Coxeter element of the parabolic subgroup $\Gamma(w)$.  Therefore, there is a complex $\Delta^+$ associated to $(\Gamma(w),w)$.  It can be naturally identified with the complex $\Delta^+(w)$ defined above.  The result thus follows from Proposition~\ref{prop:ball}.  (For more details, see also \cite[Section~8.2]{BJV18}, and references therein).
\end{proof}

Note that the previous property cannot be extended to $w=1$, i.e., $j=0$.  Indeed, that would give the empty simplicial complex, which is not contractible.

To finish this appendix, we explain how the facets of $\Delta^+$ can be characterized via decreasing factorizations of $c$ as a product of $n$ reflections, for some total order $\prec$ associated to the Coxeter element $c$.  We refer to ~\cite{BWLattice} in the case of a bipartite Coxeter element, and ~\cite{CLS14} for other standard Coxeter elements. 

\begin{defi}[Dyer~\cite{Dye93}]
  A total order $\prec$ on $T$ is called a {\it reflection ordering} if the following holds: for any rank 2 parabolic subgroup $P\subset W$ with  $P\cap T = \{u_1,\dots,u_m\}$ indexed as in Lemma~\ref{lem:rank2}, we have either
  \[
    u_1 \prec u_2 \prec \dots \prec u_m
        \quad \text{ or } \quad 
    u_m \prec u_{m-1} \prec \dots \prec u_1.
  \]
\end{defi}

We have seen that there are two valid indexing of each set $P\cap T$, which are reverse of each other.  Note that the above definition does not depend on which indexing is used.

The next definition was introduced in the context of combinatorial topology, to prove the shellability of $NC$.

\begin{defi}[Athanasiadis, Brady and Watt~\cite{ABW07}]  \label{def:compatiblereforder}
A reflection ordering $\prec$ on $T$ is {\it compatible} with a standard Coxeter element $c$ if the following holds: for each $w\in NC_2$, with $\TT(w) = \{ u_1 , \dots , u_m  \}$ indexed so that $u_1 \prec \dots \prec u_m$, we have $w = u_i u_{i-1}$ for $1\leq i \leq m $ (where $u_0=u_m$).
\end{defi}

To explain this definition, a few remarks are necessary.  Let $w$ and $\{u_1,\dots,u_m\}$  and $\TT(w) = \{u_1,\dots,u_m\}$ as in Lemma~\ref{lem:rank2}.  By Proposition~\ref{parabolic_standard_coxeter_element}, we have either $w=u_1 u_m$ or $w=u_m u_1$.  It follows that: either $w = u_i u_{i-1}$ for $1\leq i \leq m $, or $w = u_{i-1} u_i $ for $1\leq i \leq m $.  Note that the two situations are mutually exclusive if $m\geq 3$, but both hold if $m=2$.  The compatibility of the reflection ordering $\prec$ with $c$ means that we are always in the first situation ($w = u_i u_{i-1}$) if $m\geq 3$ and $u_1,\dots,u_m$ are indexed in increasing order.

Some properties of the $c$-compatible reflection ordering $\prec$ can also be reformulated as follows:

\begin{lemm}  \label{lemm:reforder}
Let $w\in NC_2$.  Then:
\begin{itemize}
    \item a $\prec$-decreasing factorization $w=v_1v_2$ where $v_1,v_2\in T$ is such that $\langle \rho(v_1)|\rho(v_2)\rangle\geq 0$.
    \item $w$ has a unique $\prec$-increasing factorization $w=v_1v_2$ where $v_1,v_2\in T$, and it is such that $\langle \rho(v_1)|\rho(v_2)\rangle\leq 0$.
\end{itemize}
\end{lemm}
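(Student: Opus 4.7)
The plan is to reduce everything to the structure of the rank $2$ parabolic subgroup $\Gamma(w)$. First I apply Lemma~\ref{lem:rank2} to $\Gamma(w)$ to get an indexing $\TT(w) = \{u_1, \ldots, u_m\}$ satisfying $u_{i+1}u_i = u_i u_{i-1}$ (with $u_0 := u_m$), in which $u_1, u_m$ are the simple reflections of $\Gamma(w)$. By the discussion following Definition~\ref{def:compatiblereforder}, $c$-compatibility of $\prec$ is precisely the condition that, after possibly reversing the indexing, one has $u_1 \prec u_2 \prec \cdots \prec u_m$ and $w = u_i u_{i-1}$ for each $i = 1, \ldots, m$.

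Second, I enumerate the reduced factorizations $w = v_1 v_2$, which necessarily satisfy $v_1, v_2 \in \TT(w)$. The identities $w = u_i u_{i-1}$ for $1 \leq i \leq m$ give the complete list of $m$ possibilities. Exactly one, $w = u_1 u_m$ (case $i = 1$), has $v_1 \prec v_2$ and is thus the unique $\prec$-increasing factorization; the remaining $m-1$ factorizations $w = u_i u_{i-1}$ for $2 \leq i \leq m$ have $v_1 \succ v_2$. This isolates the pairs whose inner products need to be controlled.

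Third, I check the sign conditions via the geometry of $\Gamma(w) \cong I_2(m)$ acting on the plane $P$ spanned by $\rho(u_1), \ldots, \rho(u_m)$. The inequality $\langle \rho(u_1) | \rho(u_m) \rangle \leq 0$ is the classical fact that simple roots of a finite Coxeter system have non-positive pairwise inner products. For the decreasing case, the conjugation $u_{i+1} = u_i u_{i-1} u_i$ coming from Lemma~\ref{lem:rank2} yields $\rho(u_{i+1}) = \pm u_i(\rho(u_{i-1}))$, so if $\theta_i$ denotes the angle of $\rho(u_i)$ in $P$ then $\theta_{i+1} - \theta_i \equiv \theta_i - \theta_{i-1} \pmod \pi$. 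Hence the $\theta_i$ form an arithmetic progression modulo $\pi$, and the requirement that $\rho(u_1), \rho(u_m)$ lie at the two extreme angles of the positive half-plane pins the common difference to $\pi/m$; consequently $\langle \rho(u_i) | \rho(u_{i-1}) \rangle$ has the sign of $\cos(\pi/m) \geq 0$. The main obstacle is really the bookkeeping of aligning the two a priori distinct indexings of $\TT(w)$, from Lemma~\ref{lem:rank2} and from $\prec$, so that $c$-compatibility can be invoked unambiguously; once that is settled, the rest is routine dihedral trigonometry.
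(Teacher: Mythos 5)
Your proof is correct; the paper in fact states this lemma without proof, presenting it as a direct reformulation of the compatibility condition, and your reduction to the rank-$2$ dihedral subgroup $\Gamma(w)$ via Lemma~\ref{lem:rank2} and Definition~\ref{def:compatiblereforder} is exactly the intended argument (in particular, listing the $m$ factorizations $w=u_iu_{i-1}$ and isolating $w=u_1u_m$ as the unique increasing one is right). The only step worth spelling out is the passage from the arithmetic progression modulo $\pi$ to the actual signs: since all $m$ positive roots of $\Gamma(w)$ lie in the cone of angle $\pi-\pi/m$ spanned by the simple roots $\rho(u_1)$ and $\rho(u_m)$, the congruences modulo $\pi$ determine the true angles, so consecutive roots $\rho(u_{i-1}),\rho(u_i)$ sit at angle exactly $\pi/m$ (giving inner product $\cos(\pi/m)\geq 0$), while the extreme pair sits at angle $\pi-\pi/m$ (giving $-\cos(\pi/m)\leq 0$).
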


By reinterpreting the definition of $\Delta^+$ in terms of the reflection ordering, the following is natural:

\begin{lemm} \label{clus_alterdef}
Assume that $\prec$ is a reflection ordering compatible with $c$.  Let $f=\{t_1,\dots,t_n\}\subset T$, indexed so that $t_1\succ \dots \succ t_n$.  Then $f$ is a positive $c$-cluster if and only if $c=t_1\cdots t_n$.
\end{lemm}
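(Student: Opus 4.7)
The plan is to prove both directions using the subword property of reduced reflection factorizations, together with Lemma~\ref{lemm:reforder}, which controls the sign of inner products via the $\prec$-ordering of rank~$2$ factorizations.

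For the backward direction, assume $c = t_1\cdots t_n$ with $t_1 \succ \cdots \succ t_n$. Since $\lt(c) = n$, this is a reduced factorization, and in particular the $t_k$ are pairwise distinct. For $i < j$, the subword property implies $t_i t_j \leq_T c$, and since $t_i \neq t_j$ we have $t_i t_j \in NC_2$. As $t_i \succ t_j$, this is a $\prec$-decreasing factorization of $t_i t_j$, so Lemma~\ref{lemm:reforder} yields $\langle \rho(t_i) \mid \rho(t_j)\rangle \geq 0$. Combined with the identity $c = t_1\cdots t_n$, this shows that $\{t_1,\dots,t_n\}$ satisfies both conditions of Definition~\ref{def:clust1}.

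For the forward direction, suppose $f = \{t_1,\dots,t_n\}$ is a positive $c$-cluster; by definition there is some ordering $(u_1,\dots,u_n)$ of $f$ with $c = u_1\cdots u_n$. The idea is a bubble-sort argument: any $\prec$-ascent in this factorization can be removed by an adjacent commutation, without altering either the product $c$ or the underlying set $f$. Concretely, suppose $u_i \prec u_{i+1}$. By the subword property applied to the reduced factorization $c = u_1\cdots u_n$, and the distinctness of the $u_k$, one has $u_i u_{i+1} \in NC_2$; moreover $u_i \cdot u_{i+1}$ is a $\prec$-increasing factorization of this element, so Lemma~\ref{lemm:reforder} forces $\langle \rho(u_i) \mid \rho(u_{i+1})\rangle \leq 0$. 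But the clustering hypothesis gives $\langle \rho(u_i) \mid \rho(u_{i+1})\rangle \geq 0$, whence the inner product vanishes, $u_i$ and $u_{i+1}$ are orthogonal reflections and commute, and we may swap them in the factorization. Iterating yields $c = t_1\cdots t_n$ with $t_1 \succ \cdots \succ t_n$.

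The main obstacle is the key observation in the forward direction: for a positive $c$-cluster, every $\prec$-ascent in a reduced factorization of $c$ must come from a pair of \emph{orthogonal} (hence commuting) reflections. This simultaneously uses both halves of Lemma~\ref{lemm:reforder} together with the cluster hypothesis, and it is what makes the rearrangement achievable via plain commutations rather than via Hurwitz moves (Lemma~\ref{lemm:transitivity}), which would otherwise modify the underlying set $f$.
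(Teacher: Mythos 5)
Your proof is correct and follows essentially the same route as the paper: the backward direction via the subword property and the first half of Lemma~\ref{lemm:reforder}, and the forward direction via the same bubble-sort argument, using both halves of Lemma~\ref{lemm:reforder} together with the cluster condition to show every $\prec$-ascent is a commuting (orthogonal) pair.
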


\begin{proof}
Assume $c=t_1\cdots t_n$ and $t_1\succ \dots \succ t_n$.  For $1\leq i<j\leq n$, we have $t_it_j \in NC_2$ by the subword property.  By the first point of Lemma~\ref{lemm:reforder}, $\langle \rho(t_i)  | \rho(t_j) \rangle \geq 0$.  It follows that $\{t_1,\dots, t_n\}$ is a $c$-cluster.

Reciprocally, assume $f=\{t_1,\dots,t_n\}$ is a $c$-cluster, indexed so that $c=t_1\cdots t_n$.  Suppose that there is an index $1\leq i < n $ such that $t_i\prec t_{i+1}$.  We get $\langle \rho(t_i)  | \rho(t_j) \rangle \leq 0$ from the second point of the Lemma~\ref{lemm:reforder}.  We also have $\langle \rho(t_i)  | \rho(t_{i+1}) \rangle \geq 0$ by definition of $\Delta^+$, so $\langle \rho(t_i)  | \rho(t_{i+1}) \rangle = 0$.  This means $t_it_{i+1} = t_{i+1}t_i$.  If we replace $t_it_{i+1}$ with $t_{i+1}t_i$ in $c=t_1\cdots t_n$, we get a factorization with is lexicographically bigger.  It means that after a certain number of such commutations, we arrive at a decreasing factorization.  This permits to conclude the proof.
\end{proof}

Now, it remains only to explain why $c$-compatible reflection orderings exist, for any standard Coxeter element $c$.  A well-known construction gives an answer in the bipartite case (see~\cite{ABW07} for details).  The general answer is given below (see Equation~\eqref{eq_sorting_reforder}), it follows from results in~\cite{CLS14} where the complexes $\Delta$ and $\Delta^+$ are shown to be {\it subword complexes}.

Consider the $S$-word obtained by $k$ repetitions of $s_1\cdots s_n$.  When $k$ is large enough, it contains subwords that are reduced word for the longest element $w_\circ$.  The lexicographically minimal such subword is called the $c$-{\it sorting word}.   It is thus a reduced word $ w_\circ = s_{i_1} s_{i_2} \cdots s_{i_{nh/2}}$.  By results of Dyer~\cite{Dye93}, there is a bijection between reduced words for $w_\circ$ and reflection orderings, and in the present case it gives the reflection ordering $\prec$ such that:
\begin{equation} \label{eq_sorting_reforder}
  s_{i_1} \prec s_{i_1}s_{i_2}s_{i_1} \prec s_{i_1}s_{i_2}s_{i_3}s_{i_2}s_{i_1} \prec \cdots.
\end{equation}
In particular, any reflection appear exactly once in the list above.  To see that $\prec$ is $c$-compatible, it remains only to reconcile our definition of $\Delta^+$ in Definition~\ref{def:clust1} with the definition from ~\cite{CLS14} in terms of subword complex and the $c$-sorting word.

\section{Examples}
\label{sec:cluster_examples}

\begin{exam} \label{exam:typeA3}
Consider the case $W=\mathfrak{S}_n$ and $c=(1,\ldots,n)$ as in Example~\ref{exam:typeA}. A $c$-compatible reflection ordering on $T=\{(i,j)\; : \; 1\leq i<j\leq n\}$ is given by the lexicographic order:
\[
  (i,j)\prec (k,l) \Longleftrightarrow 
  i<k \text{ or } (i=k \text{ and } j<l).
\]
In the case $n=4$, the $5$ $c$-clusters are represented in Figure~\ref{fig:clustersS4}.  Such a $c$-cluster $\{t_1 \succ t_2 \succ t_3\}$ is represented as follows: if $t_i=(j,k)$, we draw an arrow from $j$ to $k$ with label $i$.  Note that the face-counting polynomial from \eqref{eq:muNC} is given by $1+6q+10q^2+5q^3$.
In general (other values of $n$), a $c$-cluster will be a \textit{noncrossing alternating tree}, that is a tree with noncrossing edges such that at each vertex $i$, neighbours are all $<i$ or all $>i$.  Faces of $\Delta^+$ are naturally identified with a natural notion of {\it noncrossing alternating forests} (which also naturally occurred in \cite{Alb09}).
\end{exam} 

\begin{figure}[th]
\centering
\includegraphics[width=\textwidth]{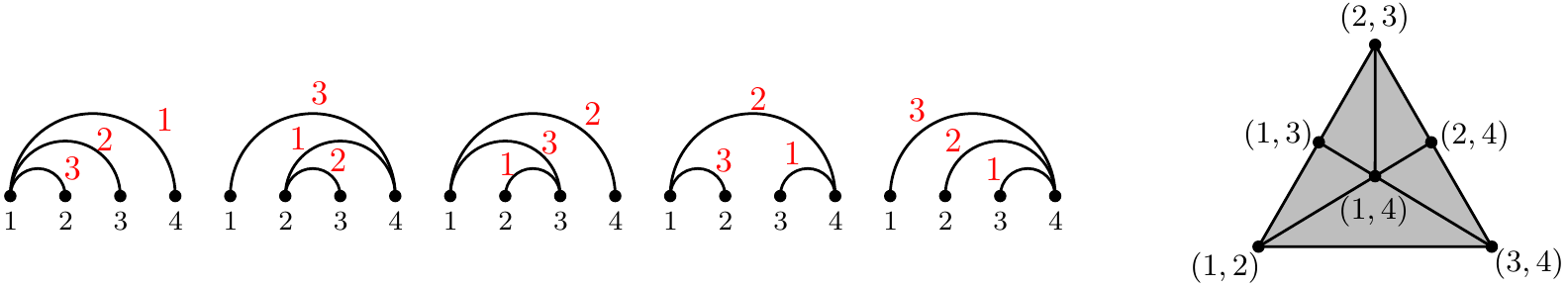}
\caption{Positive clusters for $W=\mathfrak{S}_4$ and $c=(1,2,3,4)$.\label{fig:clustersS4}}
\end{figure}

\begin{exam}
In the case of $\mathfrak{S}_4$ (type $A_3$) with the bipartite Coxeter element $c=(1,3,4,2)$, the 5 $c$-clusters are in Figure~\ref{fig:clustersS4bis}.   We use the $c$-compatible reflection ordering: $(2,3)\prec (1,3)\prec (2,4)\prec (1,4) \prec (3,4) \prec (1,2)$.  Note that that the two complexes in Figures~\ref{fig:clustersS4} and \ref{fig:clustersS4bis} are not isomorphic.
\end{exam}

\begin{figure}[th]
\centering
\includegraphics[width=\textwidth]{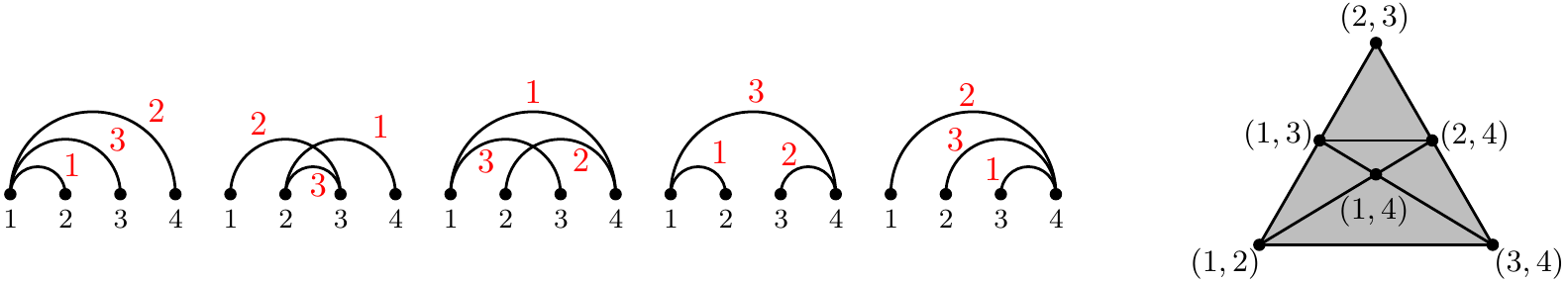}
\caption{Positive clusters for $W=\mathfrak{S}_4$ and $c=(1,3,4,2)$.\label{fig:clustersS4bis}}
\end{figure}

\begin{exam}
In type $B_3$, two representations of $\Delta^+$ for two different Coxeter elements are given in Figures~\ref{B3_1} and \ref{B3_2}. 
\end{exam}

\begin{figure}[th]
\centering
\includegraphics[scale=0.5]{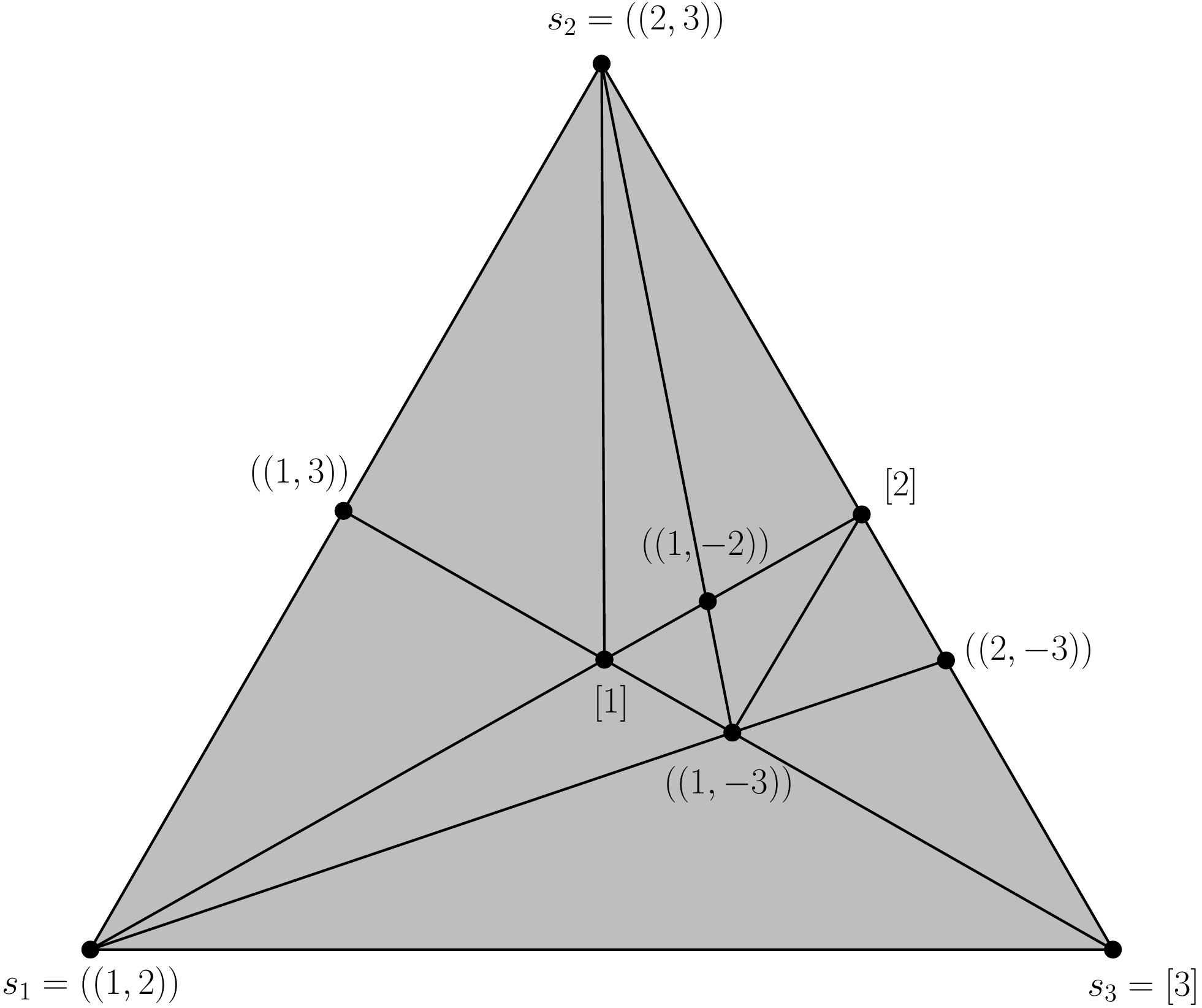}
\caption{Positive clusters for $W$ of type $B_3$ and $c=s_1s_2s_3=[[1,2,3]]$. A $c$-compatible order is   $((1,2))\prec((1,3))\prec[1]\prec((2,3))\prec((1,-2))\prec[2]\prec((1,-3))\prec((2,-3))\prec[3]$.\label{B3_1}}
\centering
\includegraphics[scale=0.5]{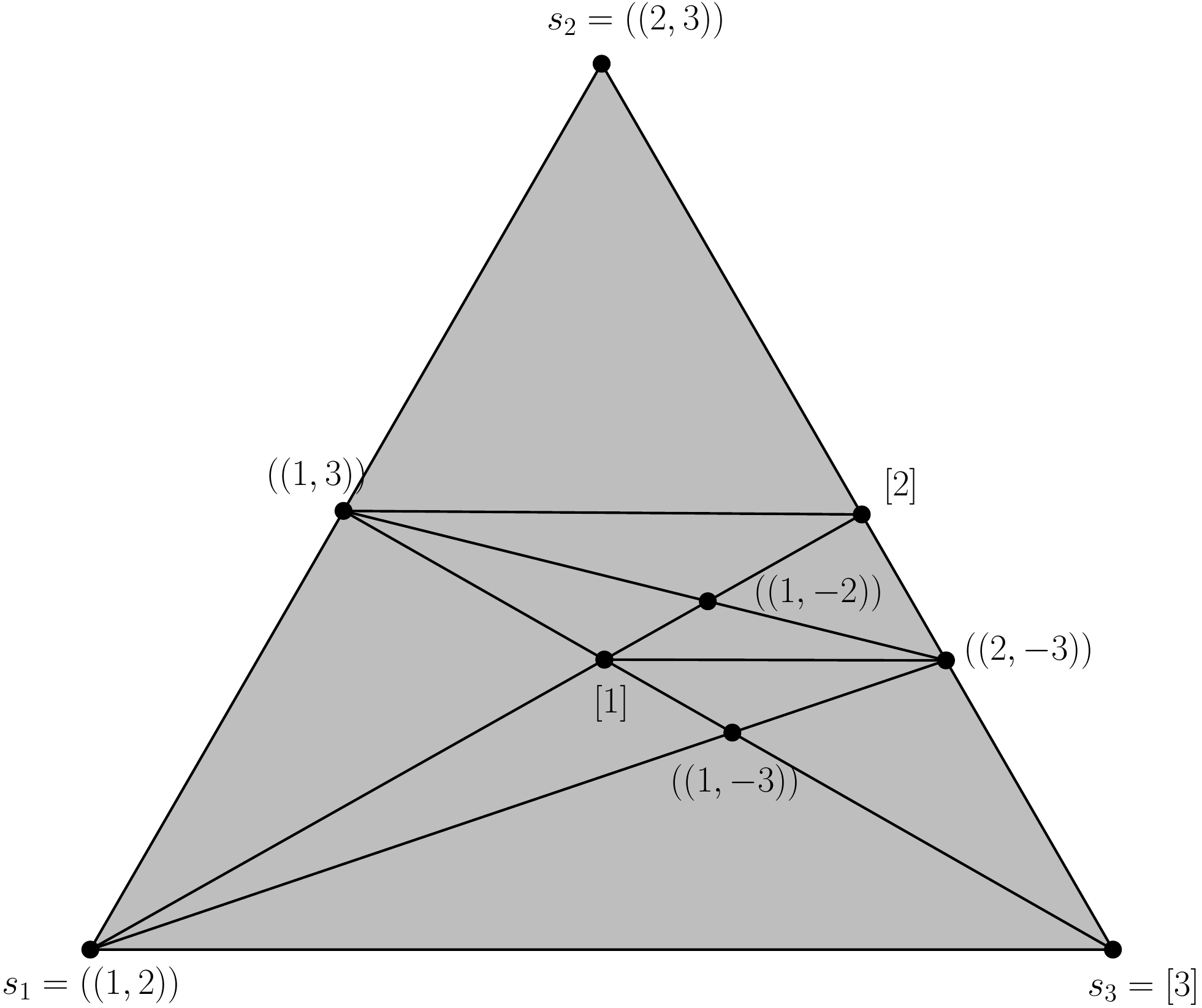}
\caption{Positive clusters for $W$ of type $B_3$ and $c=s_1s_3s_2=[[1,2,-3]]$. A $c$-compatible order is  $((1,2))\prec [3]\prec ((1,-3))\prec ((2,-3))\prec [1]\prec ((1,-2))\prec ((1,3))\prec [2]\prec ((2,3))$.
\label{B3_2}}
\end{figure}

\begin{figure}[th]
\centering
\includegraphics[width=0.8\textwidth]{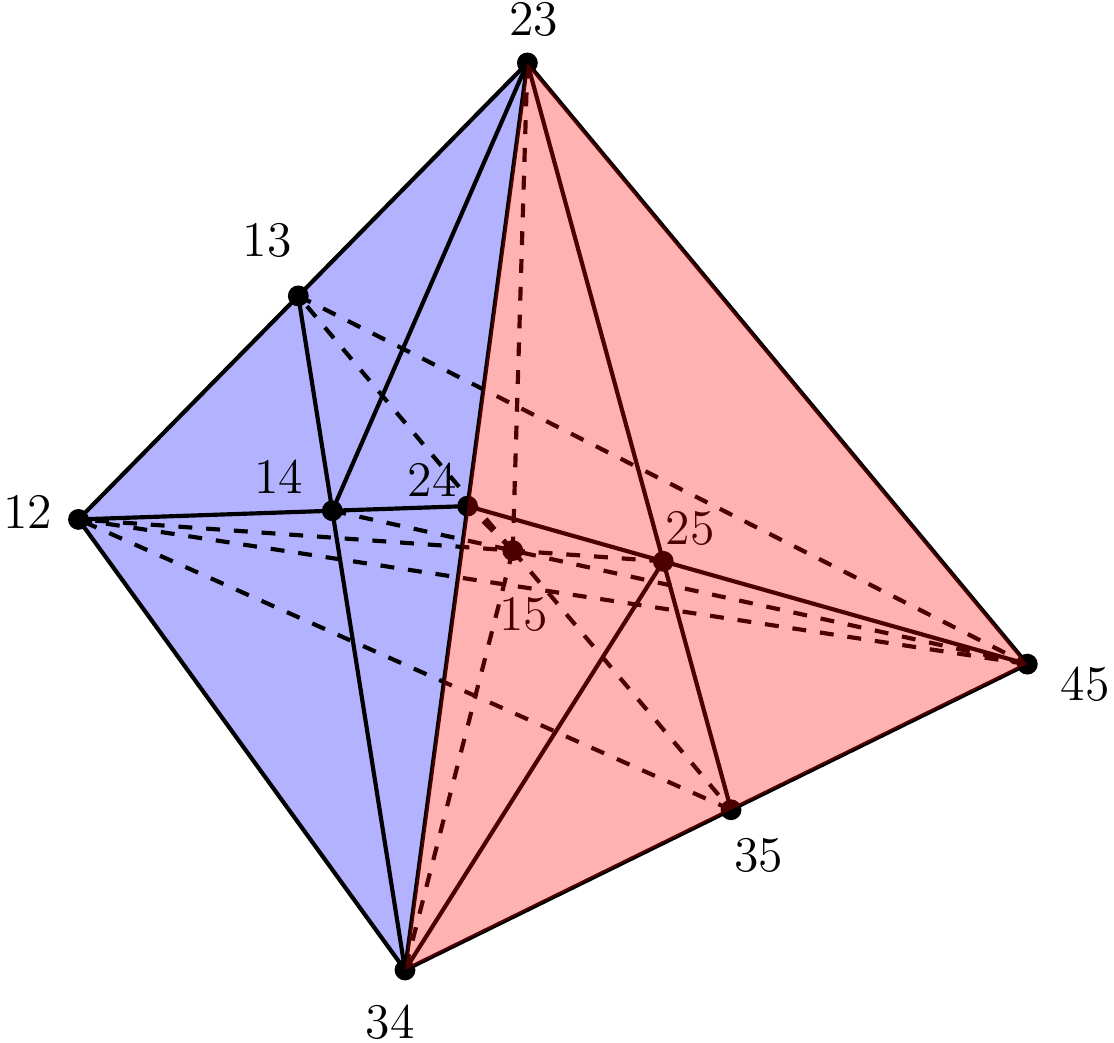}
\caption{Positive clusters for $W=\mathfrak{S}_5$ and $c=(1,2,3,4,5)$.}
\end{figure}

\begin{figure}[th]
\centering
\includegraphics[width=0.8\textwidth]{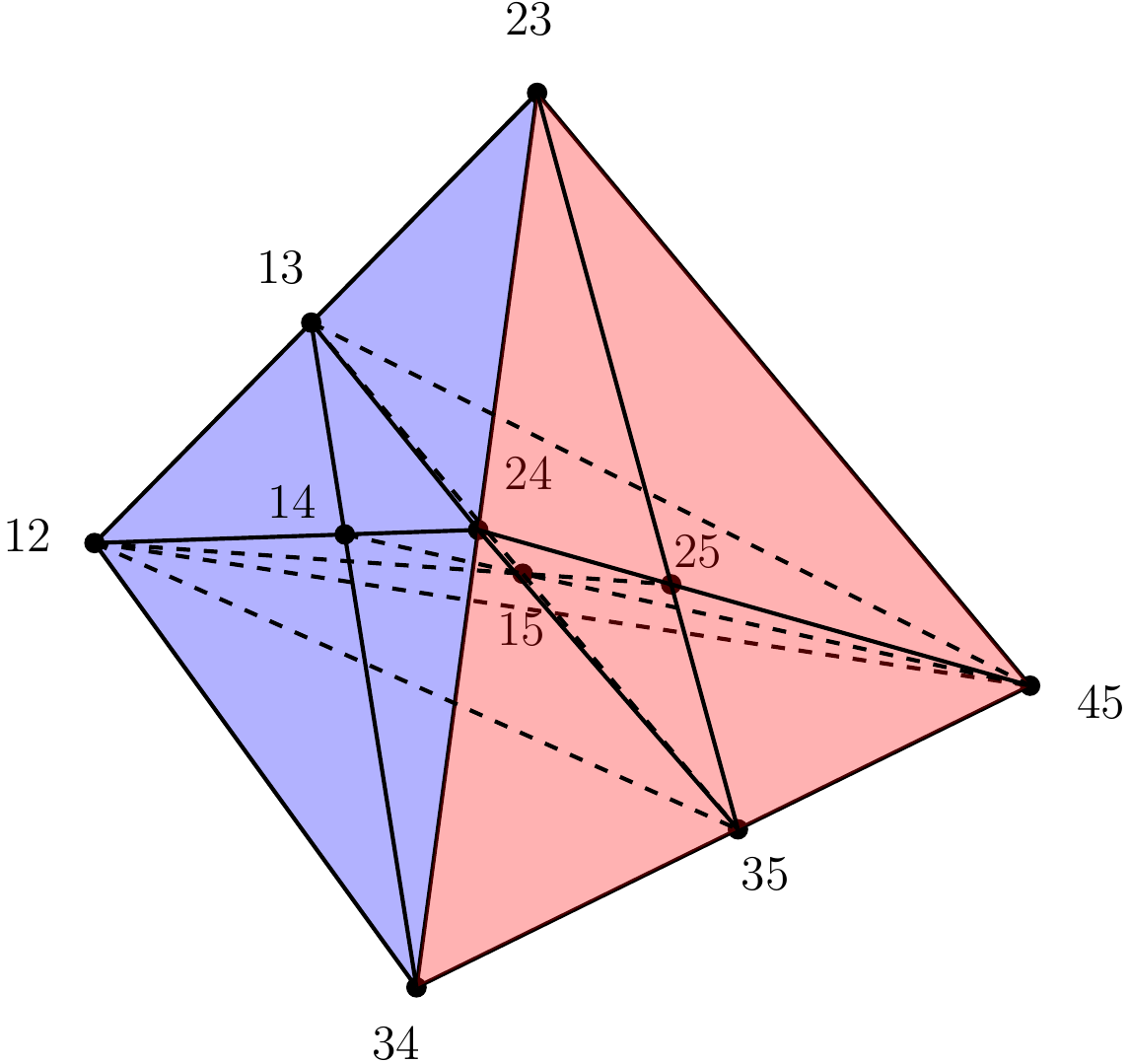}
\caption{Positive clusters for $W=\mathfrak{S}_5$ and $c=(1,3,5,4,2)$.}
\end{figure}


\begin{thebibliography}{10}

\bibitem{Alb09}
{\sc Albenque, M., and Nadeau, P.}
\newblock Growth function for a class of monoids.
\newblock In {\em 21st {I}nternational {C}onference on {F}ormal {P}ower
  {S}eries and {A}lgebraic {C}ombinatorics ({FPSAC} 2009)}, Discrete Math.
  Theor. Comput. Sci. Proc., AK. Assoc. Discrete Math. Theor. Comput. Sci.,
  Nancy, 2009, pp.~25--38.

\bibitem{Armstrong06}
{\sc Armstrong, D.}
\newblock Generalized noncrossing partitions and combinatorics of {C}oxeter
  groups.
\newblock {\em Mem. Amer. Math. Soc. 202}, 949 (2009), x+159.

\bibitem{AthaEnum}
{\sc Athanasiadis, C.~A.}
\newblock On some enumerative aspects of generalized associahedra.
\newblock {\em European J. Combin. 28}, 4 (2007), 1208--1215.

\bibitem{AthaBrad06}
{\sc Athanasiadis, C.~A., Brady, T., McCammond, J., and Watt, C.}
\newblock {$h$}-vectors of generalized associahedra and noncrossing partitions.
\newblock {\em Int. Math. Res. Not.\/} (2006), Art. ID 69705, 28.

\bibitem{ABW07}
{\sc Athanasiadis, C.~A., Brady, T., and Watt, C.}
\newblock Shellability of noncrossing partition lattices.
\newblock {\em Proc. Amer. Math. Soc. 135}, 4 (2007), 939--949.

\bibitem{Bes03}
{\sc Bessis, D.}
\newblock The dual braid monoid.
\newblock {\em Ann. Sci. \'Ecole Norm. Sup. (4) 36}, 5 (2003), 647--683.

\bibitem{Bes06}
{\sc Bessis, D.}
\newblock A dual braid monoid for the free group.
\newblock {\em J. Algebra 302}, 1 (2006), 55--69.

\bibitem{Bes15}
{\sc Bessis, D.}
\newblock Finite complex reflection arrangements are {$K(\pi,1)$}.
\newblock {\em Ann. of Math. (2) 181}, 3 (2015), 809--904.

\bibitem{Bia97}
{\sc Biane, P.}
\newblock Some properties of crossings and partitions.
\newblock {\em Discrete Math. 175}, 1-3 (1997), 41--53.

\bibitem{BJV18}
{\sc Biane, P., and Josuat-Verg\`es, M.}
\newblock Noncrossing partitions, {B}ruhat order and the cluster complex.
\newblock {\em Ann. Inst. Fourier (Grenoble) 69}, 5 (2019), 2241--2289.

\bibitem{Bir98}
{\sc Birman, J., Ko, K.~H., and Lee, S.~J.}
\newblock A new approach to the word and conjugacy problems in the braid
  groups.
\newblock {\em Adv. Math. 139}, 2 (1998), 322--353.

\bibitem{BWKP1}
{\sc Brady, T., and Watt, C.}
\newblock {$K(\pi,1)$}'s for {A}rtin groups of finite type.
\newblock In {\em Proceedings of the Conference on Geometric and Combinatorial
  Group Theory, Part I (Haifa, 2000)\/} (2002), vol.~94, pp.~225--250.

\bibitem{BWLattice}
{\sc Brady, T., and Watt, C.}
\newblock Non-crossing partition lattices in finite real reflection groups.
\newblock {\em Trans. Amer. Math. Soc. 360}, 4 (2008), 1983--2005.

\bibitem{BH93}
{\sc Bruns, W., and Herzog, J.}
\newblock {\em Cohen-{M}acaulay rings}, vol.~39 of {\em Cambridge Studies in
  Advanced Mathematics}.
\newblock Cambridge University Press, Cambridge, 1993.

\bibitem{Car72}
{\sc Carter, R.~W.}
\newblock Conjugacy classes in the {W}eyl group.
\newblock {\em Compositio Math. 25\/} (1972), 1--59.

\bibitem{Car69}
{\sc Cartier, P., and Foata, D.}
\newblock {\em Probl\`emes combinatoires de commutation et r\'earrangements}.
\newblock Lecture Notes in Mathematics, No. 85. Springer-Verlag, Berlin, 1969.

\bibitem{CLS14}
{\sc Ceballos, C., Labb\'{e}, J.-P., and Stump, C.}
\newblock Subword complexes, cluster complexes, and generalized
  multi-associahedra.
\newblock {\em J. Algebraic Combin. 39}, 1 (2014), 17--51.

\bibitem{Cha05}
{\sc Chapoton, F.}
\newblock Enumerative properties of generalized associahedra.
\newblock {\em S\'{e}m. Lothar. Combin. 51\/} (2004/05), Art. B51b, 16.

\bibitem{CFZ05}
{\sc Chapoton, F., Fomin, S., and Zelevinsky, A.}
\newblock Polytopal realizations of generalized associahedra.
\newblock {\em Canad. Math. Bull. 45}, 4 (2002), 537--566.
\newblock Dedicated to Robert V. Moody.

\bibitem{DDGKM15}
{\sc Dehornoy, P., Digne, F., Godelle, E., Krammer, D., and Michel, J.}
\newblock {\em Foundations of {G}arside theory}, vol.~22 of {\em EMS Tracts in
  Mathematics}.
\newblock European Mathematical Society (EMS), Z\"{u}rich, 2015.
\newblock Author name on title page: Daan Kramer.

\bibitem{Dye93}
{\sc Dyer, M.~J.}
\newblock Hecke algebras and shellings of {B}ruhat intervals.
\newblock {\em Compositio Math. 89}, 1 (1993), 91--115.

\bibitem{FZ03}
{\sc Fomin, S., and Zelevinsky, A.}
\newblock {$Y$}-systems and generalized associahedra.
\newblock {\em Ann. of Math. (2) 158}, 3 (2003), 977--1018.

\bibitem{Fro99}
{\sc Fr{\"o}berg, R.}
\newblock Koszul algebras.
\newblock In {\em Advances in commutative ring theory ({F}ez, 1997)}, vol.~205
  of {\em Lecture Notes in Pure and Appl. Math.} Dekker, New York, 1999,
  pp.~337--350.

\bibitem{Hat03}
{\sc Hatcher, A.}
\newblock {\em Algebraic topology}.
\newblock Cambridge University Press, Cambridge, 2002.

\bibitem{Humphreys90}
{\sc Humphreys, J.~E.}
\newblock {\em Reflection groups and {C}oxeter groups}, vol.~29 of {\em
  Cambridge Studies in Advanced Mathematics}.
\newblock Cambridge University Press, Cambridge, 1990.

\bibitem{IngallsThomas}
{\sc Ingalls, C., and Thomas, H.}
\newblock Noncrossing partitions and representations of quivers.
\newblock {\em Compos. Math. 145}, 6 (2009), 1533--1562.

\bibitem{IS17}
{\sc Ishibe, T., and Saito, K.}
\newblock Zero loci of skew-growth functions for dual {A}rtin monoids.
\newblock {\em J. Algebra 480\/} (2017), 1--21.

\bibitem{Koba90}
{\sc Kobayashi, Y.}
\newblock Partial commutation, homology, and the {M}\"obius inversion formula.
\newblock In {\em Words, languages and combinatorics ({K}yoto, 1990)}. World
  Sci. Publ., River Edge, NJ, 1992, pp.~288--298.

\bibitem{Koz08}
{\sc Kozlov, D.}
\newblock {\em Combinatorial algebraic topology}, vol.~21 of {\em Algorithms
  and Computation in Mathematics}.
\newblock Springer, Berlin, 2008.

\bibitem{Kra05}
{\sc Krammer, D.}
\newblock Braid groups.
\newblock {\em http://www.warwick.ac.uk/$\sim$ masbal/MA4F2Braids/braids.pdf\/}
  (2005).

\bibitem{UKra}
{\sc Krähmer, U.}
\newblock Notes on {K}oszul algebras.

\bibitem{LV12}
{\sc Loday, J.-L., and Vallette, B.}
\newblock {\em Algebraic operads}, vol.~346 of {\em Grundlehren der
  Mathematischen Wissenschaften [Fundamental Principles of Mathematical
  Sciences]}.
\newblock Springer, Heidelberg, 2012.

\bibitem{MAJ94}
{\sc Majid, S.}
\newblock Algebras and {H}opf algebras in braided categories.
\newblock In {\em Advances in {H}opf algebras ({C}hicago, {IL}, 1992)},
  vol.~158 of {\em Lecture Notes in Pure and Appl. Math.} Dekker, New York,
  1994, pp.~55--105.

\bibitem{McCammond06}
{\sc McCammond, J.}
\newblock Noncrossing partitions in surprising locations.
\newblock {\em Amer. Math. Monthly 113}, 7 (2006), 598--610.

\bibitem{McS17}
{\sc McCammond, J., and Sulway, R.}
\newblock Artin groups of {E}uclidean type.
\newblock {\em Invent. Math. 210}, 1 (2017), 231--282.

\bibitem{MS00}
{\sc Milinski, A., and Schneider, H.-J.}
\newblock Pointed indecomposable {H}opf algebras over {C}oxeter groups.
\newblock In {\em New trends in {H}opf algebra theory ({L}a {F}alda, 1999)},
  vol.~267 of {\em Contemp. Math.} Amer. Math. Soc., Providence, RI, 2000,
  pp.~215--236.

\bibitem{Muh15}
{\sc M\"{u}hle, H.}
\newblock {E}{L}-shellability and noncrossing partitions associated with
  well-generated complex reflection groups.
\newblock {\em European J. Combin. 43\/} (2015), 249--278.

\bibitem{OS80}
{\sc Orlik, P., and Solomon, L.}
\newblock Combinatorics and topology of complements of hyperplanes.
\newblock {\em Invent. Math. 56}, 2 (1980), 167--189.

\bibitem{PS19}
{\sc Paolini, G., and Salvetti, M.}
\newblock Proof of the ${K}(\pi,1)$ conjecture for affine {A}rtin groups, 2019.

\bibitem{PP05}
{\sc Polishchuk, A., and Positselski, L.}
\newblock {\em Quadratic algebras}, vol.~37 of {\em University Lecture Series}.
\newblock American Mathematical Society, Providence, RI, 2005.

\bibitem{Priddy}
{\sc Priddy, S.~B.}
\newblock Koszul resolutions.
\newblock {\em Trans. Amer. Math. Soc. 152\/} (1970), 39--60.

\bibitem{Rea07}
{\sc Reading, N.}
\newblock Clusters, {C}oxeter-sortable elements and noncrossing partitions.
\newblock {\em Trans. Amer. Math. Soc. 359}, 12 (2007), 5931--5958.

\bibitem{RS09}
{\sc Reading, N., and Speyer, D.~E.}
\newblock Cambrian fans.
\newblock {\em J. Eur. Math. Soc. (JEMS) 11}, 2 (2009), 407--447.

\bibitem{TAK05}
{\sc Takeuchi, M.}
\newblock A survey on {N}ichols algebras.
\newblock In {\em Algebraic structures and their representations}, vol.~376 of
  {\em Contemp. Math.} Amer. Math. Soc., Providence, RI, 2005, pp.~105--117.

\bibitem{Wac07}
{\sc Wachs, M.~L.}
\newblock Poset topology: tools and applications.
\newblock In {\em Geometric combinatorics}, vol.~13 of {\em IAS/Park City Math.
  Ser.} Amer. Math. Soc., Providence, RI, 2007, pp.~497--615.

\bibitem{Zhang20}
{\sc Zhang, Y.}
\newblock {\em Combinatorics of Milnor fibres of reflection arrangements}.
\newblock PhD thesis, University of Sidney, 2020.

\bibitem{Zie95}
{\sc Ziegler, G.~M.}
\newblock {\em Lectures on polytopes}, vol.~152 of {\em Graduate Texts in
  Mathematics}.
\newblock Springer-Verlag, New York, 1995.

\end{thebibliography}
\end{document}